\documentclass[11pt,  reqno]{amsart}

 \usepackage[margin=1in]{geometry}
\usepackage[latin1]{inputenc}
\usepackage{amsmath}
\usepackage{amsfonts}
\usepackage{hyperref}
\usepackage{color}
\usepackage[all,cmtip]{xy}

\usepackage{psfrag}      
               
\usepackage{graphics}
\usepackage[dvips]{graphicx}

\usepackage{boxedminipage}
\usepackage{color}

\DeclareMathOperator{\T}{\mathbb{T}}
\DeclareMathOperator{\Z}{\mathbb{Z}}

\DeclareMathOperator{\N}{\mathbb{N}}
\DeclareMathOperator{\TP}{\mathbb{T P}}
\DeclareMathOperator{\R}{\mathbb{R}}
\DeclareMathOperator{\CP}{\mathbb{C P}}

\DeclareMathOperator{\C}{\mathbb{C}}

\DeclareMathOperator{\Oc}{\mathcal{O}}

\DeclareMathOperator{\K}{\mathcal{K}}

\newcommand{\comment}[1]{}

\newtheorem{theorem}{Theorem}[section]
\newtheorem{lemma}[theorem]{Lemma}

\newtheorem{proposition}[theorem]{Proposition}
\newtheorem{corollary}[theorem]{Corollary}
\newtheorem{definition}[theorem]{Definition}
\newtheorem{exa}[theorem]{Example}
\newtheorem{example}[theorem]{Example}

\newenvironment{remark}[1][Remark]{\begin{trivlist}
\item[\hskip \labelsep {\bfseries #1}]}{\end{trivlist}}

\begin{document}

\title{A tropical intersection product in  matroidal fans}
\author{Kristin M. Shaw}

\address{Kristin Shaw, Section de Math\'ematiques, Universit\'e de Gen\`eve, Villa Battelle, 7 Route de Drize, 1227 Carouge, Suisse.}

\email{Kristin.Shaw@unige.ch}

\begin{abstract}We construct an intersection product on tropical cycles contained in   the Bergman fan of a matroid. To do this we first establish a connection between the operations of deletion and restriction in matroid theory and tropical modifications as defined by Mikhalkin in \cite{MikICM}. This product generalises the product of  Allermann  and Rau \cite{AllRau}, and Allermann \cite{Aller} and also provides an alternative procedure for intersecting cycles which is not based on intersecting with Cartier divisors.  Also, we simplify the definition in the case of one dimensional fan cycles in two dimensional matroidal fans and given an application of the   intersection product to realisability questions in tropical geometry.
\end{abstract}

\maketitle

\begin{section}{Introduction}

One of the main goals of tropical geometry is to study classical algebraic geometry via  polyhedral complexes. 
Tropicalisations of subvarieties of $(\C^*)^n$ are  rational polyhedral complexes in $\R^n$ equipped with positive integer weights and satisfying the so-called balancing condition. For this reason tropical subvarities of $\R^n$ are considered to be  polyhedral complexes with this added structure,  \cite{FirstSteps}, \cite{MikICM}. 

Before the advent of tropical geometry, Bergman fans were initially defined to be the \textit{logarithmic limit sets} of complex algebraic varieties \cite{Berg}. When equipped with appropriate weights they are tropical varieties in the above sense.  For varieties defined by linear ideals, Sturmfels showed that the Bergman fan depends only on the underlying  matroid. In addition,  he  generalised the Bergman fan construction to any loopless matroid \cite{SturmPoly}. Following this, an explicit construction of the fan involving matroid polytopes was given in \cite{SturmFeich}, and its relation to the lattice of flats of the corresponding matroid has been studied in \cite{ArdKli}. 

Bergman fans of matroids highlight the fact that not all tropical subvarieties have a classical counter-part. It is well-known that there exist matroids not representable over any field. However, in the tropics, as mentioned above every matroid has a geometric representation as a polyhedral fan.   In  this article,  the Bergman fan of a matroid will also simply be called a matroidal fan. 

Matroidal fans have many nice properties making them candidates for the local models of tropical non-singular spaces. Tropical linear spaces as studied by Speyer  \cite{Speyer} and Speyer and Sturmfels \cite{tropGrass} are  locally matroidal fans.  In addition, any codimension one cycle on a matroidal fan may be expressed as a tropical Cartier divisor, this is proved  in Section \ref{sec:Berg}. A particular case of this was proved by Allermann in \cite{Aller} for so-called ``tropical linear fans". These are skeleta of tropical hyperplanes, and not all matroidal fans arise in this way. 
However we show here that every matroidal fan of dimension can be obtained from $\T^n$ by a sequence of tropical modifications, see Subsection \ref{sec:Berg}. 
 A tropical  modification can be thought of as a re-embedding of a tropical cycle. 
 For this reason they are considered to be models of  tropical affine space.
The aim of this paper is to give a procedure for intersecting tropical cycles contained in matroidal fans which can be applied to more general smooth tropical spaces. 

When the ambient space is $\R^n$, a tropical intersection product already exists, and is known as \textit{stable intersection}, see \cite{FirstSteps}, \cite{MikICM}. This intersection product is  related to the fan displacement rule in toric intersection theory. 
In this case there are also various correspondence theorems relating the intersection of classical algebraic varieties in $(\C^*)^n$  to the stable intersection of their tropicalisations,  see \cite{Pay1}, \cite{Br16}. The stable intersection of two tropical  cycles $A, B \subset \R^n$ is supported on the skeleton $(A \cap B)^{(k)}$ where $k$ is the expected dimension of intersection. Moreover,  the weights of the facets of the intersection are determined by the  local structure of the complexes, see \cite{Katz}, \cite{Rau}. 

One of the main differences and advantages  of  tropical stable intersection over  classical theories is that products are defined on the level of cycles, even in the case of self-intersection.  This greatly contrasts the situation in classical algebraic geometry, where some notion of equivalence is necessary in order to define an intersection product. A principal example of this is rational equivalence and Chow groups (see Chapter 1 of \cite{FultInt}).

On more general spaces Allermann and Rau \cite{AllRau} have defined  intersections with tropical Cartier divisors, following a proposal of Mikhalkin in \cite{MikICM}. Moreover, they expressed the diagonal $\Delta \subset \R^n \times \R^n$ as a product of tropical Cartier divisors, and using a procedure analogous to classical geometry may intersect any two cycles in $\R^n$; first by intersecting their Cartesian product with the diagonal in $\R^n \times \R^n$ and then taking the pushforward of the result back to $\R^n$. 
It has been shown independently in \cite{Katz}, \cite{Rau} that when the ambient space is  $\R^n$ this intersection product coincides with the stable intersection mentioned above. 
Once again in Allermann and Rau's theory, the product is defined on the level of tropical cycles, there is no need to pass to equivalence classes. 

The same phenomenon is true of the product on matroidal fans to be defined here. For a matroidal fan, $V \subset \R^n$ the product of two cycles $A, B \subset V$ is a well defined tropical cycle of the expected dimension contained in the fan $V$.   
As expected, the product  is commutative, distributive and associative, 
see Proposition \ref{propMatInt}. The same proposition  proves that the product is compatible with intersections of Cartier divisors from \cite{AllRau}  and \cite{MikICM}. 

The method used here to construct the intersection product on cycles in a matroidal fan is similar in  style to \textit{moving lemmas} from classical algebraic geometry. This  one approach to  classical intersection theory begins with a notion of equivalence of cycles (such as rational equivalence), then  given two cycles $X, Y \subset W$, one shows that there exists a class $X^{\prime}$ rationally equivalent to $X$ which intersects $Y$ properly. Naively speaking, many tropical cycles contained in a matroidal fan may not ``move" on their own. In \cite{MikICM},  there is an example of a rigid tropical cycle contained in a two dimensional matroidal fan in $\R^3$. This line and fan make an reappearance here in Figure \ref{intEx} and Example \ref{negCurve}.  The idea is to construct a procedure which allows us to ``split", instead of move, the tropical cycles into a sum in such a way that the intersection product on the components may be defined. 
The technique used here to construct this splitting comes from tropical modifications.

Tropical modifications, introduced by Mikhalkin in  \cite{MikICM},  are a simple yet powerful  tool in tropical geometry. 
Working over a field $\textbf{K}$,  if  $\textbf{V} \subset \textbf{K}^n$ is an algebraic variety and $\textbf{f}$ a non-singular regular function on $\textbf{V}$ with divisor $\textbf{D} = \text{div}_{\textbf{V}}(\textbf{f})$, the graph of $\textbf{f}$ gives an embedding of $\textbf{V}$ in $\textbf{K}^{n+1}$, with the image of $\textbf{D}$ being contained in the hyperplane $\{z_{n+1} = 0 \}$. This does not correspond to  a very interesting operation classically, however performing the  analogous procedure on tropical varieties produces a polyhedral complex with different topology.
Often we work in  $\R^n$ which is the tropicalisation of the torus $(\textbf{K}^*)^n$. Performing the same procedure as above for a variety $\textbf{V}$ in  $(\textbf{K}^*)^n$, the graph of a non-singular regular function $\textbf{f}$ restricted to $\textbf{V}$ gives an embedding of  $\textbf{V} \backslash \textbf{D}$ to $(\textbf{K}^*)^{n+1}$. 
Given a tropical variety $C \subset \R^n$ and a tropical function $f$ on $\R^n$ the elementary open modification  of $C$ along $f$ should be thought of simply as a reembedding of $C$ with the divisor of $f$ removed. 
 
As mentioned previously, a $k$-dimensional matriodal fan in $\R^n$ may be obtained from $\R^k$ by a sequence of elementary open tropical modifications   along functions with matroidal divisors. 
 Given tropical cycles $A, B$ in a matroidal fan $V \subset \R^n$ we may use this to express the product of two cycles $A, B \subset V$ as a sum of products in different matroidal fans $V^{\prime}$ and $D \times \R$, where $V^{\prime}$ is of lower codimension than $V$ and $D$ is of lower dimension. This procedure is repeated until the intersection is reduced to a  sum of intersections in $\R^k$ where  stable intersection may be applied.   


\vspace{0.5cm}
The contents of the paper are as follows.
Section  \ref{definitions} reviews   the definitions of tropical cycles, regular and rational functions, tropical modifications/contractions and divisors in $\R^n$ from \cite{MikICM} and \cite{AllRau}. Also we introduce their generalizations to $\T^n = [-\infty, \infty)^n$. In Subsection \ref{sec:Berg}, the connection between tropical modifications and operations  of matroid theory are established. Here, we work in tropical projective space which allows us  to define the Bergman fan of a matroid with loops.  An interesting discovery in this section is the need for \textbf{non-regular} modifications to produce matroidal fans, even in the case of realisable matroids, see Example \ref{M0n}. 

In Section \ref{Intersections}, tropical  modifications and contractions are used to construct an intersection product  on cycles in a matroidal fan. Again the idea is to split the cycles  by using tropical modifications, and then to give the product on the components. Much of the work of this section is devoted to showing that this product is well-defined.  In this section we also show that the product is associative, distributive,  commutative and behaves as expected with divisors, Proposition \ref{propMatInt}.

Section \ref{sec:surfaces} studies the case of one dimensional fan cycles in two dimensional matroidal fans. 
Firstly, Proposition \ref{surface} simplifies the intersection product in this case. 
Next if two one cycles contained in a two dimensional matroidal fan  are also matroidal, Theorem \ref{thm:linesflats}  describes the intersection product of these cycles in terms of the lattice of flats of the corresponding matroids. 
Finally, using tropical modifications, Theorem \ref{realisable} provides an obstruction to realising effective one dimensional tropical cycles in two dimensional fans  by classical algebraic curves in planes. For instance, this shows that the tropical cycle $B$ from Example \ref{negCurve} is not realisable. 
However, there are tropical curves in surfaces which are known to not be realisable but which are not obstructed by this theorem. 

The author is grateful to Beno\^it Bertrand, Erwan Brugall\'e, Grigory Mikhalkin and Johannes Rau for many helpful discussions and comments. 

\end{section}

\begin{section}{Preliminaries}\label{definitions}

\begin{subsection}{Tropical cycles in $\R^n$}
Tropical cycles in $\R^n$ have  been presented in various places, \cite{AllRau}, \cite{KatzTool},  \cite{MikICM}, \cite{FirstSteps}. We review the definitions here for completeness and to ease the generalisations to cycles in $\T^n$.  First we give a  summary of  the necessary terminology. 

A polyhedral complex  $P$ in $\R^n$ is a finite collection of polyhedra containing all the faces of its members and the intersection of any two polyhedra in $P$ is a common face. We say a polyhedral complex $P$ is \textbf{rational} if every face in $P$ is defined by the intersection of half-spaces given by equations $\langle x, v \rangle \leq a $ where  $a \in \R^n$  and $v \in \Z^n \subset \R^n$. The \textbf{support} $|P|$ of a complex is the union of all polyhedra in $P$ as sets, and $P$ is pure dimensional if $|P|$ is. A \textbf{facet} of $P$ is a face of top dimension.  Further, a polyhedral complex $P$ is \textbf{weighted} if each facet $F$ of $P$ is equipped with a weight $w_F \in \Z$. 
A polyhedral complex $P_1$ is a \textbf{refinement} of a complex $P_2$ if their supports are equal and every face of $P_2$ is a face of $P_1$.
For a complex $P$ denote by $P^{(k)}$ the $k$-\textbf{skeleton} of $P$, meaning the union of all faces of $P$ of dimension $i \leq k$.

\begin{definition}\label{bal}
A pure dimensional weighted rational polyhedral complex  $C \subset \R^n$ is balanced if it satisfies the following condition on every codimension one face $E \subset C$: Let  $F_1, \dots F_s$ be the facets adjacent to $E$ and $v_i$ be a primitive integer vector such that for an $x \in E$, $x + \epsilon v_i \in F_i$ for some $\epsilon >0$. Then, 
$$ \sum_{i = 1}^s w_{F_i}v_i,$$
is parallel to the face $E$, where $w_{F_i}$ is the weight of the facet $F_i$, see the left hand side of Figure \ref{fig:cycles}.
\end{definition}

\begin{definition}   \cite{MikICM}
A tropical $k$-cycle $C \subset \R^n$ is a pure $k$-dimensional weighted,  rational, polyhedral complex satisfying the balancing condition.
\end{definition}

A tropical $k$-cycle is \textbf{effective} if all of its facets have positive weights.  

\begin{definition}
Given two tropical cycles $A, C \subseteq \R^n$ we say $A$ is a subcycle of $C$ if $|A| \subseteq |C|$  and every open face of $A$ is contained in a single open face of $C$. 
\end{definition}

\begin{remark}
If $A$ is a subcycle of $C$, then there exists a refinement of the polyhedral structure on $C$ so that  $A$ is a polyhedral subcomplex of $C$. Although we will not need to consider this refinement of $C$, the polyhedral structure on $A$ as a subcycle of $C$ will be important. 
\end{remark}

We can define an equivalence relation by declaring a cycle with all facets of weight zero to be equivalent to the empty polyhedral complex. 
The set of tropical $k$ cycles in $\R^n$ modulo this equivalence  will be denoted $Z_k(\R^n)$. 
This set forms a group under the operation of unions of complexes and addition of weight functions denoted by $+$. See \cite{AllRau}, \cite{MikICM} for more details.

As mentioned in the introduction, there have been two approaches to intersections of cycles $\R^n$. Firstly, tropical stable intersection was defined for curves in $\R^2$ in \cite{FirstSteps} and for general cycles by Mikhalkin in \cite{MikICM}. The intersection product  in $\R^n$ of Allermann and Rau  is based on intersecting with the diagonal $\Delta \subset \R^n \times \R^n$, see \cite{AllRau}. The two definitions have been shown to be equivalent in both \cite{Rau}, \cite{Katz}. We review the definition of stable intersection in $\R^n$. 

\begin{definition}\label{stabRn} \cite{FirstSteps},  \cite{MikICM}
Let $A \in Z_{m_1}(\R^n)$ and $B \in Z_{m_2}(\R^n)$, then their stable intersection, denoted $A.B$  is supported on the complex $(A \cap B)^{(k)}$ where $m = m_1 + m_2 - n$ with weights assigned on facets in the following way:

\begin{enumerate}

\item \label{S} If a facet $F \subset (A \cap B)^{k}$ is the  intersection of top dimensional facets $D \subset A$ and $E \subset B$ and $D$ and $E$ intersect transversely, then 
\begin{equation*}\label{transvWeight}
w_{A.B}(F) = w_A (D) w_B (E) [ \Z^n \colon \Lambda_D + \Lambda_E ], 
\end{equation*}
where $\Lambda_D$ and $\Lambda_E$ are the integer lattices parallel to the faces $D$ and $E$ respectively.

\item Otherwise for a generic vector $v$ with non-rational projections and an $\epsilon > 0$, in a neighborhood of $F$, $A_{\epsilon} = A + \epsilon \cdot v$ and $B$ will meet in a collection of facets $F_1 \dots F_s$ parallel to $F$ such that the intersection at each $F_i$ is as in the case $(1)$ above. 
Then we set, 
\begin{equation*}\label{nonTrans}
w_F(A.B) = \sum_{i=1}^s w_{F_i}(A_{\epsilon}. B).
\end{equation*}

\end{enumerate}

\end{definition}

That the formula above is well-defined regardless of choice of the vector $v$ follows from the balancing condition. In fact,  the above weight calculation comes from the fan displacement rule for intersection of Minkowski weights from  \cite{FulStur}, for more details see  \cite{AllRau} or \cite{Katz}.
By the equivalence of stable intersection and Allermann and Rau's intersection product on $\R^n$ shown in  \cite{Rau} \cite{Katz}, the following two propositions can be found in Section 9 of \cite{AllRau}.

\begin{corollary}\cite{AllRau}\label{asscom}
Given $A$,  $B$ tropical cycles in $\R^n$ the following hold, 
\begin{enumerate}
\item $A.B \subset \R^n$ is a balanced tropical cycle.
\item $(A.B).C= A.(B.C) $
\item $A.B = B.A$
\item $A.(B +C) = A.B +A.C$ 
\end{enumerate}
\end{corollary}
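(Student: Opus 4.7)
The plan is to reduce each of the four statements to the transverse case in Definition \ref{stabRn}(1), where the weight is an elementary lattice-index expression, and then to exploit its symmetries. Concretely, pick a generic direction $v \in \R^n$ whose coordinates are rationally independent from the slopes of $A$, $B$, $C$; then for small $\epsilon > 0$ the translate $A_\epsilon = A + \epsilon v$ meets $B$ (respectively $C$, respectively $B.C$) transversely at the expected dimension wherever it meets, so the perturbation clause of Definition \ref{stabRn}(2) applies uniformly and every intersection appearing in the statements may be computed by summing transverse weights.

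Commutativity \emph{(3)} is then immediate, since the transverse weight $w_A(D)\,w_B(E)\,[\Z^n : \Lambda_D + \Lambda_E]$ is symmetric in the two factors, and perturbing $A$ by $\epsilon v$ versus perturbing $B$ by $-\epsilon v$ yields the same limit complex with identical weights. For distributivity \emph{(4)}, I would pass to a common refinement so that $B+C$ has facets that are facets of $B$ or $C$ with their weights added; the transverse weight formula is $\Z$-linear in the weight of the second factor, so the sum over perturbed transverse facets in Definition \ref{stabRn}(2) distributes term by term, giving $A.(B+C)=A.B+A.C$.

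For balancing \emph{(1)}, I would fix a codimension-one face $E \subset A.B$ with adjacent facets $F_1,\dots,F_s$ and primitive integer vectors $v_i$. After perturbing to $A_\epsilon$, each $F_i$ splits into a family of transverse intersections of facets of $A_\epsilon$ with facets of $B$. The sum $\sum w_{F_i}(A.B)\,v_i$ then rewrites as a $\Z$-linear combination of the balancing defects of $A_\epsilon$ and of $B$ at their own codimension-one faces meeting near $E$, each of which is parallel to $E$ by assumption. This is the standard argument showing that the fan displacement rule produces a Minkowski weight, as in \cite{FulStur}.

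Associativity \emph{(2)} is the main obstacle, and I would handle it by a double perturbation: translate $A$ by $\epsilon v$ and $C$ by $\delta w$ with $v, w$ generic and $\delta \ll \epsilon$ so that every pairwise and triple intersection occurring in both bracketings is transverse at generic points. The weight at a triple transverse facet $D \cap E \cap G$ is then $w_A(D)\,w_B(E)\,w_C(G)\,[\Z^n : \Lambda_D + \Lambda_E + \Lambda_G]$, manifestly symmetric in the three cycles, so $(A_\epsilon.B).C_\delta$ and $A_\epsilon.(B.C_\delta)$ agree facet by facet. Independence of the limits $\delta \to 0$ and $\epsilon \to 0$ is again the content of the fan displacement rule: $A.B.C$ corresponds to the triple cup product of Minkowski weights on a toric compactification, and associativity of that cup product pushes through. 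Alternatively, as the excerpt notes, one transports the corresponding statements from Section~9 of \cite{AllRau} through the equivalence of stable intersection and the Cartier-divisor product proved in \cite{Rau} and \cite{Katz}.
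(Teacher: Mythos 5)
First, a remark on what the paper actually does: it offers no proof of this corollary at all. It is imported verbatim from Section~9 of \cite{AllRau} via the equivalence of stable intersection with the diagonal-based product established in \cite{Rau} and \cite{Katz} --- which is exactly the ``alternative'' you mention in your final sentence. So any direct argument is extra work relative to the paper. Your treatments of (1), (3) and (4) are the standard fan-displacement argument of \cite{FulStur} and are essentially fine, modulo the usual care in (4) about facets of $B+C$ whose weights cancel (harmless, since the transverse weight is $\Z$-linear in $w_B$ and $w_C$ separately and zero-weight cycles are identified with the empty cycle).

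The genuine gap is in associativity. The weight you assign to a transverse triple intersection, $w_A(D)\,w_B(E)\,w_C(G)\,[\Z^n:\Lambda_D+\Lambda_E+\Lambda_G]$, is not what Definition~\ref{stabRn} produces when iterated, and it is not the correct multiplicity. Iterating the definition, the facet $D\cap E$ of $A.B$ carries the lattice $\Lambda_D\cap\Lambda_E$ and weight $w_A(D)w_B(E)[\Z^n:\Lambda_D+\Lambda_E]$, so the triple weight is $w_A(D)w_B(E)w_C(G)\,[\Z^n:\Lambda_D+\Lambda_E]\cdot[\Z^n:(\Lambda_D\cap\Lambda_E)+\Lambda_G]$, which in general exceeds $[\Z^n:\Lambda_D+\Lambda_E+\Lambda_G]$. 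For example, in $\R^3$ take three planes with $\Lambda_D=\langle e_1,e_2\rangle$, $\Lambda_E=\langle e_1,e_2+2e_3\rangle$, $\Lambda_G=\langle e_2,e_3\rangle$: then $\Lambda_D+\Lambda_E+\Lambda_G=\Z^3$ has index $1$, while either bracketing of the iterated product yields multiplicity $2$. So your expression is not ``manifestly symmetric and equal to both sides''; the symmetry in the three factors of the true expression $[\Z^n:\Lambda_D+\Lambda_E]\cdot[\Z^n:(\Lambda_D\cap\Lambda_E)+\Lambda_G]$ is precisely the nontrivial content of associativity and has to be proved. The standard repair is to identify this product of indices with the single index $[\Z^{3n}:\Lambda_\Delta+(\Lambda_D\oplus\Lambda_E\oplus\Lambda_G)]$, where $\Lambda_\Delta$ is the small diagonal copy of $\Z^n$ in $\Z^{3n}$; that quantity is visibly symmetric and agrees with either bracketing. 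Alternatively, simply retain the citation route through \cite{AllRau}, \cite{Rau}, \cite{Katz}, which is all the paper itself does.
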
 

\end{subsection}

\begin{subsection}{Tropical cycles in $\T^n$.}

The tropical numbers $\T = \R \cup \{ -\infty \}$ form a semi-field equipped with  the following operations:
$$``x \cdot y"= x + y \  \mbox{and} \  ``x+y" = \max \{x, y\}.$$ As the multiplicative and additive identity we have  $1_{\T} = 0$,  $0_{\T}= -\infty$ and tropical division corresponds to  subtraction.
We equip $\T^n =   [ -\infty, \infty )^n$ with the Euclidean topology, and will think of it as tropical  affine $n$-space.  It  has a boundary which admits a natural stratification in the following way: Let $H_i = \{ x \in \T^n \ | \ x_i = -\infty\}$, be the $i^{th}$ coordinate hyperplane. Given a subset $I \subseteq [n] =  \{1, \dots , n\}$ denote $H_I = \cap_{i \in I} H_i$, and  $$H_I^{\times}   = \{x \in H_I | x \notin H_J \ I \subset J \}.$$ Then, $$\T^n = \coprod_{\emptyset \subseteq I \subseteq [n]} H^{\times}_I.$$ For every $I \in [n]$, we have $H_I =  \T^{n-|I|}$ and $H^{\times}_I  = \R^{n-|I|}$.  Say a point  $x \in H^{\times}_I \subset \T^n$ is of \textbf{sedentarity} $I$, this is denoted $S(x) = I$. The \textbf{order of sedentarity} of $x \in \T^n$ is the size of $S(x)$ and denoted $s(x)$.


We can generalise the definition of cycles in $\R^n$  to $\T^n$ by allowing  cycles contained in the boundary strata of $\T^n$.

\begin{definition}\label{sedSubset}
A subset $B \subseteq \T^n$ is said to be of sedentarity $I$ if it is the topological closure in $\T^n$ of some $B^o \subset H^{\times}_I$.  A tropical $k$-cycle  $C \subseteq \T^n$  of sedentarity $I$ is the closure of a tropical $k$-cycle $C^o \subseteq H^{\times}_I =  \R^{n-|I|} $, see the right hand side of Figure \ref{fig:cycles}. 
\end{definition}

\begin{figure}
a)\includegraphics[scale=0.8]{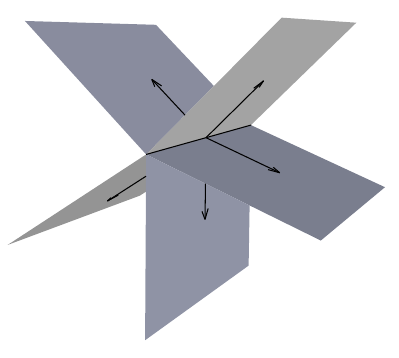}
\put(-40,60){\large{$v_1$}}
\put(-50,113){\large{$v_2$}}
\put(-110,110){\large{$v_3$}}
\put(-120,45){\large{$v_4$}}
\put(-80,40){\large{$v_5$}}
\hspace{0.5cm}
b) \includegraphics[scale=0.9]{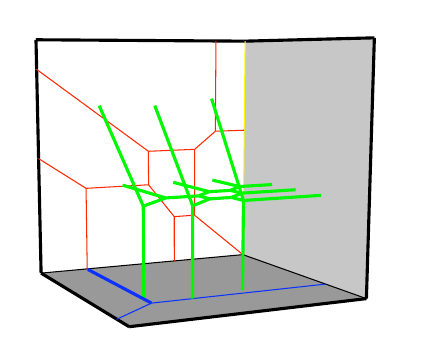}
\put(-23, 15){$x_1$}
\put(-180, 30){$x_2$}
\put(-83, 135){$x_3$}
\vspace{0.3cm}
\caption{a) Balancing condition for a surface b) Cycles of sedentarity in $\T^3$. \label{fig:cycles}}

\end{figure}

Again let, $ Z_{k, I}(\T^n)$ denote the quotient of the set of all $k$-cycles of sedentarity $I$  by those with all zero weights.  Given two cycles $A, B \in Z_{k, I}(\T^n)$ denote by $A+B$ the closure of $A^o + B^o$ as defined in $\R^{n-{I}}$. Then, $ Z_{k, I}(\T^n) \cong Z_k(\R^{n- |I|})$ and we define, 
$$Z_k(\T^n) =  \bigoplus_{ \emptyset \subseteq I \subseteq [n] } Z_{k, I} (\T^n).$$

Once again, a tropical cycle in $\T^n$ is \textbf{effective} if all of its facets are equipped with positive weights. Also, as in $\R^n$, a   tropical cycle $A \subset \T^n$ is a \textbf{subcycle} of a cycle $C \subset \T^n$ if the supports satisfy $|A| \subseteq |C|$ and every face of $A$ is contained in a face of $C$.

\vspace{0.3cm}

For cycles in $\T^n$ we define their intersection with a boundary hyperplane. Let $e_1, \dots e_n$ denote the standard basis of $\R^n$. 

\begin{definition}\label{cycleIntStratum}
Let $A \subseteq \T^n$ be a $k$-cycle of sedentarity $I$ then 

\begin{itemize}
\item{if  $i \in I$, set $A.H_i = \emptyset$.}
\item{If $I = \emptyset$ then $A.H_i$ is supported on $(A \cap H_i)^{(k-1)}$  with the weight function defined as follows:  Given a facet $F$ of $(A \cap H_i)^{(k-1)}$ it is adjacent to some facets $\tilde{F}_1, \dots , \tilde{F}_s$ of $A$. 
Then, $$w_{A.H_i}(F) = \sum_{l=1}^s w_A (\tilde{F}_l)[ \Z^n : \Lambda_{\tilde{F}_l} + \Lambda_i^{\perp}],$$
where $\Lambda_i^{\perp}= \{ x \in \Z^n \ | \ \langle x, e_i \rangle = 0\}$. }
\item{If $I \not = \emptyset$ and $i \not \in I$ then  $A.H_i$ is the intersection of $A.H_{i \cup I}$ calculated in $H_I = \T^{n-|I|}$ as in the case above.}
\end{itemize}

\end{definition}

Every cycle $A \subseteq \T^n$ can be uniquely decomposed as a sum of its parts of different sedentarity and we extend the above definition to cycles of mixed sedentarity by linearity. 

\begin{proposition}\label{propIntStratum}
Given cycles $A, B \subseteq \T^n$ we have:
\begin{enumerate}
\item $A.H_i$ is a balanced cycle. 
\item $(A+B).H_i = A.H_i + B.H_i$.

\end{enumerate}

\end{proposition}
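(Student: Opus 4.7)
My plan is to treat the two parts separately. Part (1) is a local balancing check derived from the balancing of $A$ together with the weight formula in Definition \ref{cycleIntStratum}, and part (2) is a direct consequence of the linearity of the same formula.

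\textbf{Plan for (1).} First reduce to the case where $A$ has sedentarity $\emptyset$: if $i$ lies in the sedentarity $I$ of $A$ then $A.H_i = \emptyset$ by definition, and otherwise the definition puts the computation inside $H_I \cong \T^{n-|I|}$ where $A$ again has sedentarity $\emptyset$. So assume $A \subset \R^n$ is a $k$-cycle of sedentarity $\emptyset$, and fix a codimension-one face $E$ of $A.H_i$. The strategy is to lift $E$ to one or more codimension-one faces $\tilde E$ of $A \subset \R^n$ whose closures in $\T^n$ contain $E$. The facets of $A.H_i$ adjacent to $E$ are then of the form $F_l = \overline{\tilde F_l} \cap H_i$, where $\tilde F_l$ ranges over the facets of $A$ adjacent to such an $\tilde E$ and whose closure in $\T^n$ meets $H_i$ in a $(k-1)$-dimensional cell containing $E$. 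The lattice index $[\Z^n : \Lambda_{\tilde F_l} + \Lambda_i^\perp]$ appearing in the weight formula equals the absolute value of the $i$-th coordinate of the primitive generator of $\Lambda_{\tilde F_l}$ transverse to $\Lambda_i^\perp$. A direct calculation then relates the primitive direction $u_l \in \Z^n$ from $\tilde E$ into $\tilde F_l$ to the primitive direction $u_l' \in \Z^{n-1}$ from $E$ into $F_l$ inside $H_i$, via the projection $\pi : \R^n \to \R^{n-1}$ that drops the $i$-th coordinate.

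With this identification, the balancing relation
\[ \sum_l w_A(\tilde F_l)\, u_l \ \in \ \Lambda_{\tilde E} \otimes \R \]
at $\tilde E$ pushes forward through $\pi$ to
\[ \sum_l w_{A.H_i}(F_l)\, u_l' \ \in \ \Lambda_E \otimes \R, \]
which is the balancing condition at $E$. The lattice index factors in the weights $w_{A.H_i}(F_l)$ on the right exactly compensate for the change in primitivity introduced by $\pi$. When several codimension-one faces $\tilde E^1, \tilde E^2, \ldots$ of $A$ lift a common $E$, we sum the balancing identities over the lifts; facets of $A$ adjacent to a lift but projecting into the interior of $A.H_i$ contribute vectors already lying in $\Lambda_E \otimes \R$ and so do not affect the conclusion.

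\textbf{Plan for (2).} On a common refinement of $A$ and $B$ (assumed of the same sedentarity $I$) we have $w_{A+B}(\tilde F) = w_A(\tilde F) + w_B(\tilde F)$ on each facet. If $i \in I$ both sides vanish by definition; otherwise, the weight formula in Definition \ref{cycleIntStratum} is linear in the facet weights, so
\[ w_{(A+B).H_i}(F) = \sum_l (w_A + w_B)(\tilde F_l)\, [\Z^n : \Lambda_{\tilde F_l} + \Lambda_i^\perp] = w_{A.H_i}(F) + w_{B.H_i}(F). \]
For cycles of mixed sedentarity the identity extends by linearity of the definition.

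\textbf{Main obstacle.} The key technical step is the calculation in part (1) showing that the lattice index factor in the weight formula exactly compensates for the change of primitive vectors under the projection $\pi$. Additional bookkeeping is needed when faces of $A.H_i$ have sedentarity strictly larger than $\{i\}$, and when several codimension-one faces of $A$ lift a common face of $A.H_i$; both cases are handled by decomposing into sedentarity strata and summing contributions lift-by-lift.
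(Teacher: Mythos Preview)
Your proposal is correct and follows the same underlying strategy as the paper: reduce to sedentarity $\emptyset$, lift a codimension-one face $E$ of $A.H_i$ to codimension-one faces $\tilde E_j$ of $A$, use the balancing of $A$ at each lift, and push everything forward through the projection $\pi$ killing $e_i$; part (2) is handled identically by linearity of the weight formula.

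The one difference in execution is that the paper avoids your direct lattice-index computation by introducing an auxiliary affine hyperplane $L_M = \{x_i = -M\}$ for $M \gg 0$. The point is that the weight of $A.H_i$ at a facet $F$ coincides, term by term, with the weight of the stable intersection $A.L_M$ in $\R^n$ at the corresponding facet $\tilde F \cap L_M$. Since $A.L_M$ is already known to be a balanced cycle in $\R^n$ (Definition \ref{stabRn} and Corollary \ref{asscom}), balancing at each $\tilde E_j \cap L_M$ comes for free, and one simply applies $\pi_{I*}$ and sums over the lifts $\tilde E_j$. This shortcut packages your ``lattice index compensates for change of primitivity'' computation into the known balancing of stable intersections, and it also cleanly disposes of the bookkeeping you flag (facets of $A$ adjacent to a lift that do not reach $H_i$ simply do not appear in $A.L_M$). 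Your direct approach is perfectly valid and perhaps more self-contained; the paper's is shorter because it leverages an existing result.
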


\begin{proof}
\comment{We must only check balancing at the codimension one faces of $A.H_i$ laying in the interior of a boundary stratum. We will assume that $A$ is of sedentarity $I = \emptyset$. Given a  codimension one face $E \subset A.H_i$, it is adjacent to a collection $\tilde{E_1}, \dots , \tilde{E}_s$ of codimension one faces of $A$. For $M >> 0 $ let $L_M = \{  x \in \T^n \ | \ x_i = -M \}$, then $A.L_M$ is a balanced cycle and has some codimension one faces $E_i$ corresponding to $\tilde{E}_i \cap L_M$ for $1 \leq i \leq s$ where $\tilde{E}_i$ is a codimension one face of $A$. At each of these faces $E_i$ the cycle $A.L_M$ is balanced. \textcolor{red}{All facets $\tilde{F}$ which are adjacent to two or more $\tilde{E}_i$ in $A$ are collapsed to $E$ in $A.H_i$. }
Any other facets adjacent to a $\tilde{E}_i$ get sent to facets of $A.H_i$ adjacent to $E$. }

For the balancing condition assume that $A$ is of sedentarity $\emptyset$ and let $E \subseteq  A.H_i$ be a face of codimension one which is in the interior of a face of $\T^n$ of sedentarity $\{i\}$. Let $\tilde{E}_j$ denote the faces of codimension one of $A$ and of sedentarity $\emptyset$ which are adjacent to $E$. For $M >>0$ let $L_M = \{  x \in \T^n \ | \ x_i = -M \}$ then $\tilde{E}_j \cap L_M$ is in $A.L_M$  and $A.L_M$ is balanced at $\tilde{E}_j \cap L_M$. This means that 

\begin{equation}\label{eq:boundaryInt}
\sum_{\tilde{E_j} \subset \tilde{F}} w_{A.L_M}(\tilde{F} \cap L_M)v_{\tilde{F}} = \sum_{\tilde{E_j} \subset \tilde{F}} w_A (\tilde{F})[ \Z^n : \Lambda_{\tilde{F}_l} + \Lambda_i^{\perp}]v_{\tilde{F}}= 0, 
\end{equation}


Let $\pi_I: \R^n \longrightarrow \R^{n-|I|}$ be the linear projection with kernel $<e_i \ | \ i \in I >$.
Then a facet $\tilde{F} \supset \tilde{E}_j$ is adjacent to a face $F \supset E$ if and only if  $\pi_{I \ast}(v_{\tilde{F}}) = v_{F}$ where $v_F$ is the primitive integer vector in $\R^{n-|I|}$ orthogonal to $E$ generating $F$. Applying $\pi_{I \ast}$ to (\ref{eq:boundaryInt}) and taking the sum over all $\tilde{E}_j$ adjacent to $E$ we obtain balancing at $E$.

\comment{The image of all of the $\tilde{E}_i \cap L_M$'s and $\tilde{F} \cap L_M$ for all $\tilde{F}$'s adjacent to the $\tilde{E}_i$ under the projection $\pi_I$ corresponds to the facets adjacent to $E$ in the boundary stratum. 
Each term in the sum from \ref{cycleIntStratum} is the same as the weight of $\tilde{F}_i \cap L_M$ in $A.L_M$. }

When $A$ and $B$ are of equal sedentarity distributivity follows from the additivity of the weight function.  For cycles of mixed sedentarity the intersection is defined by extending the product linearly, so the statement is trivial. This completes the proof. 
\end{proof}

\end{subsection}

\begin{subsection}{Tropical functions, modifications and divisors.}\label{sec:mods}

\begin{definition}
Let $U$ be an open subset of $\T^n$ and let  $S(U) = \bigcup_{x \in U} S(x) \subset [n]$.  A tropical regular  function $f: U \longrightarrow \T$ is a tropical Laurent polynomial $f(x) = `` \sum_{\alpha \in \Delta} a_{\alpha}x^{\alpha} "$   where $\Delta \subset \Z^n$ is such that for all $\alpha \in \Delta$,  $\alpha_i \geq 0$ if $i \in S(U)$. 
\end{definition}

\begin{figure}
\vspace{-1cm}
\includegraphics{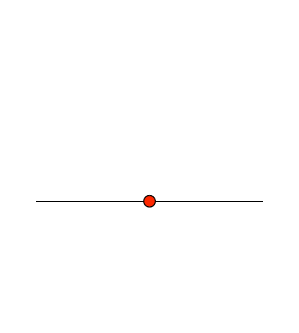}
\put(-100,50){$a$}
\put(-50, 50){$x$}
\put(-15, 40){$\T$}
\put(-120, -50){$f_1(x) = \max \{x, a\}$}
\hspace{0.2cm}
\includegraphics[scale=0.5]{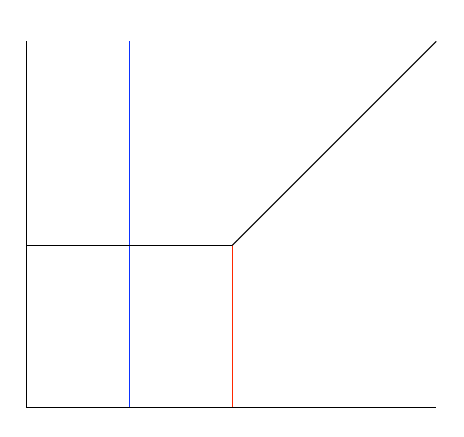}
\put(-100, -50){$f_2(x) = \max \{x,b\}$}
\put(-75, 85){$x=b$}
\put(-15, -10){$\T^2$}
\hspace{0.2cm}
\includegraphics[scale=0.6]{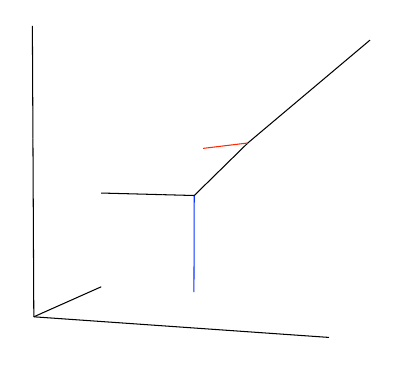}
\put(-18, 100){\tiny{(1, 1, 1)}}
\put(-55, 30){\tiny{(0, 0, -1)}}
\put(-80, 72){\tiny{(0, -1, 0)}}
\put(-100, 43){\tiny{(-1, 0, 0)}}
\put(-20, -10){$\T^3$}
\vspace{0.5cm}
\caption{Two modifications of a tropical line}
\end{figure}

A tropical regular function is a  piecewise integral affine, convex function, whose graph is  a finite polyhedral complex.
Suppose $U \subseteq \R^n \subset \T^n$ then every regular function on $U$ can be expressed as a tropical Laurent polynomial $f(x) = `` \sum_{\alpha \in \Delta} a_{\alpha} x^{\alpha} "$.  
If $U$ contains a point $x$ for which $x_i = -\infty$ for some $i$, then $``1/x_i" = -x_i = \infty \notin \T$.  Distinct tropical polynomials may represent the same functions as some monomials may be redundant.  Let  $\Oc_{\T^n}(U)$ denote  the semi-ring of regular functions on $U$ and $\Oc_{\T^n}$ the regular functions on $\T^n$. 

Tropical division corresponds to subtraction and so a rational function is of the form $ h = `` f / g" = f  - g$ where $g \neq -\infty$. On $\R^n \subset \T^n$ such a function is always defined since it is the difference of two continuous functions.  At the boundary of $\T^n$ where the function may take values $\pm \infty$ there may be a codimension two locus where the function is not defined. For example the function $f (x) = ``\frac{x_1}{x_2}"$ on $\T^2$ at the point $(-\infty, -\infty)$. 
 We denote the rational functions on $\T^n$ by $\K_{\T^n}$.  Given a tropical cycle $C \subseteq \T^n$ (or $C \subseteq \R^n$) regular functions and rational functions on $C$, denoted $\Oc_C$ and $\K_C$ respectively, are obtained by restriction of $\Oc_{\T^n}$ and $\K_{\T^n}$ (or $\Oc_{\R^n}$ and $\K_{\R^n}$) .

\vspace{0.5cm}

Given a cycle $C \subseteq \T^n$  we may consider the graph $\Gamma_f(C) \subset \T^{n+1}$ of a function $f \in \Oc_C$ restricted to $C$.  The graph $\Gamma_f(C)$ is still a rational polyhedral complex, and it inherits weights from  $C$. Since $f$ is piecewise affine $\Gamma_f(C)$ is not necessarily balanced. At any unbalanced codimension one face $E$ of $\Gamma_f(C)$ we may attach the closed facet $F_E$, generated by $E$ and  the direction $-e_{n+1}$, more precisely, 
 $$F_E= \{ (x, c) \ | \ x \in \overline{E}, c \in (x, -\infty] \} .$$ 
 Moreover, there exists a unique integer weight on $F_E$ such  that the resulting complex is now balanced at $E$.
Let the \textbf{undergraph} of $\Gamma_f(C)$ be the weighted rational polyhedral complex 
$$\mathcal{U}(\Gamma_f(C)) = \bigcup_{\substack{E \subset \Gamma_f(C) \\ codim(E) = 1}} F_E,$$ 
with weights described above. 
\begin{figure}
\includegraphics[scale=1.3]{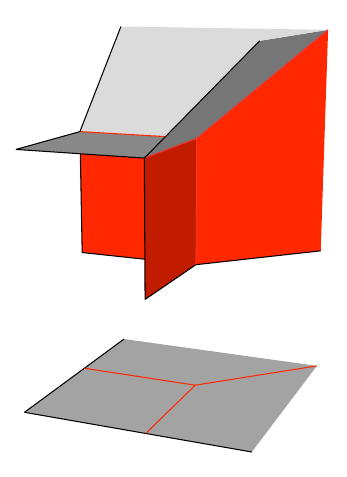}
\put(-150,45){0}
\put(-110, 70){$y$}
\put(-80, 35){$x$}
\put(-170, -15){$f(x, y) = \max \{x, y, 0 \}$}
\put(-10, 160){$\xymatrix{\T^3 \ar[ddd]_{\delta} \\ \\ \\ \T^2} $}
\put(-200, 220){\tiny{$(0,-t,0)$}}
\put(-10, 280){\tiny{$(t,t,t)$}}
\put(-160, 190){\tiny{$(-t,0,0)$}}
\put(-90, 160){\tiny{$(0,0, -t)$}}
\vspace{0.5cm}
\caption{A modification $P$ of  the tropical affine plane $\T^2$}
\label{Plane}
\end{figure}
Finally, the weighted complex 
 $$\tilde{C} = \Gamma_f(C) \cup \mathcal{U}(\Gamma_f(C)), $$
is a tropical cycle. Let $\delta:\T^{n+1} \longrightarrow \T^n$ be the linear projection with kernel generated by $e_{n+1}$. 

\begin{definition}\label{PmodDef}
Given a
cycle $C \subseteq \T^n$ and a regular function $f \in \Oc_C$,  
 the regular elementary modification of $C$ along the function $f$ is 
$$\delta:\tilde{C} \longrightarrow C.$$
\end{definition}

 Often the term ``elementary modification" will be used to denote only the cycle $\tilde{C}$, in this case the existence of the map $\delta:\tilde{C} \longrightarrow C$ and function $f$ is implied.  The  cycle $C$ will also sometimes be referred to as the contraction of $\tilde{C}$. This notation is similar in style to that used for blow-ups in
 classical algebraic geometry. 
 
 A \textbf{regular modification}, respectively \textbf{regular contraction}, is any composition of regular elementary modifications, respectively contractions. 
Using modifications we define the divisor of a function on a cycle $C \subseteq \T^n$. 
\begin{definition}\label{divTn}
Let $f, g: \T^n \longrightarrow \T$ be regular functions and suppose $g \neq 0_{\T}$ and let $C \subset \T^n$ be a cycle and $\delta_f:\tilde{C}\longrightarrow C$ the elementary modification of $C$ along the function $f$. Then, 
\begin{enumerate}
\item{$ \mbox{div}_{C}(f)  = \delta_f(\tilde{C}.H_{n+1})$}
\item{If  $h =``f/g"$ then 
$\mbox{div}_{C}(h)  =  \mbox{div}_{C}(f)  -  \mbox{div}_{C}(g) .$ }
\end{enumerate}
\end{definition}
Given a regular elementary modification $\delta: \tilde{C} \longrightarrow C$ along a function $f$, we say that $\text{div}_C(f)$ is the \textbf{divisor of the modification}. 

All of the above definitions given for cycles in $\T^n$ restrict to cycles in $\R^n$, and we may modify cycles in $\R^n$ along regular functions in $\Oc_{\R^n}$, not just functions on $\T^n$. In particular we have 
$\text{div}_{C^o}(f) = \text{div}_C(f) \cap \R^n$. This definition coincides with the definition of divisors from \cite{AllRau} and \cite{MikICM}.  From the definition of divisors we notice that a tropical invertible function on  $\T^n$ is tropical multiplication by a scalar $x \in \T^{\times} = \R$ (so addition), and a tropical invertible function on $\R^n$ is a Laurent monomial. 
\begin{proposition} \label{StabDiv} \cite{AllRau}, \cite{Katz}
Given tropical rational functions $f, g \in \K_{\R^n}$ and tropical cycles $A, B \subset \R^n$ 
\begin{enumerate}
\item $\mbox{div}(f)_{\R^n}.A = \mbox{div}_A(f)$
\item $\mbox{div}_{A+B}(f) = \mbox{div}_A(f) + \mbox{div}_B(f)$
\item $\mbox{div}_A(``f \cdot g") = \mbox{div}_A(f) + \mbox{div}_A(g)$ 
\end{enumerate}
\end{proposition}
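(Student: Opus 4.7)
The strategy is to prove (1) first and deduce (2) and (3) from it. For (1), refine $A$ and $\text{div}_{\R^n}(f)$ to a common polyhedral structure on which $f$ is affine on every maximal cell. Both $\text{div}_{\R^n}(f).A$ and $\text{div}_A(f)$ are cycles of dimension $\dim A - 1$ supported on the codimension-one skeleton of $A$, so it suffices to check the weights face by face.

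Fix a codimension-one face $E$ of $A$, with adjacent facets $F_1, \ldots, F_s$, primitive integer vectors $v_i$, weights $w_i$, and the slope $m_i \in \Z^n$ of $f$ on $F_i$. Unwinding Definitions \ref{PmodDef} and \ref{divTn}, the graph $\Gamma_f(A)$ is unbalanced at the image of $E$ in $\T^{n+1}$; the undergraph facet $F_E$ added to restore balance carries a weight, and the intersection $\tilde{A}.H_{n+1}$ followed by $\delta_f$ yields the local formula
\[
w_E\bigl(\text{div}_A(f)\bigr) \;=\; \sum_{i=1}^{s} w_i \, f|_{F_i}(v_i) \;-\; \phi_E\!\left(\sum_{i=1}^{s} w_i v_i\right),
\]
where $\phi_E$ is any linear extension of $f|_E$ (well-defined because $\sum w_i v_i$ is parallel to $E$ by balancing of $A$). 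The same recipe applied to $A = \R^n$ gives the weight function of $\text{div}_{\R^n}(f)$ on each codimension-one cell where $f$ breaks. Matching $w_E(\text{div}_{\R^n}(f).A)$ with the expression above is direct from Definition \ref{stabRn}(1) when $E$ lies transversely inside a facet of $\text{div}_{\R^n}(f)$, and requires the displacement rule of Definition \ref{stabRn}(2) together with balancing of the graph $\tilde{A}$ in the non-transverse case.

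For (2), note that the graph construction $A \mapsto \tilde{A}$ distributes over addition: $\widetilde{A+B} = \tilde{A} + \tilde{B}$ as weighted rational polyhedral complexes, because graphing $f$ on a union of cells is done cell by cell and the undergraph weights are determined additively by the facet weights of the underlying cycle. Intersection with $H_{n+1}$ is additive by Proposition \ref{propIntStratum}(2), and $\delta_f$ is linear on cycles, giving $\text{div}_{A+B}(f) = \text{div}_A(f) + \text{div}_B(f)$. Equivalently, by (1), both sides equal $\text{div}_{\R^n}(f).(A+B) = \text{div}_{\R^n}(f).A + \text{div}_{\R^n}(f).B$ using Corollary \ref{asscom}(4). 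For (3), set $h = f + g$ (the tropical product) and apply the local formula above: $h|_{F_i}(v_i) = f|_{F_i}(v_i) + g|_{F_i}(v_i)$ and any linear extension of $h|_E$ can be chosen as $\phi_E^f + \phi_E^g$, so the weight at $E$ in $\text{div}_A(\text{``}f \cdot g\text{''})$ splits into the sum of the corresponding weights for $f$ and for $g$. The rational-function case is reduced to this via Definition \ref{divTn}(2).

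The main obstacle is the non-transverse case of (1). One has to verify that the sum of lattice-index contributions from the generically displaced intersection $\text{div}_{\R^n}(f).(A + \epsilon v)$, as prescribed by Definition \ref{stabRn}(2), collapses to the single balancing-defect weight coming from the undergraph construction. This requires careful bookkeeping of primitive vectors, weights, and slope jumps of $f$, and rests on balancing of the modified cycle $\tilde{A}$ at the codimension-one face lying above $E$.
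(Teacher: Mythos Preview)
The paper does not supply a proof of this proposition: it is stated with citations to \cite{AllRau} and \cite{Katz} and used as a black box. So there is no paper proof to compare against; I can only assess your argument on its own.

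Your approach is the standard one from \cite{AllRau}. The local weight formula you write for $\mbox{div}_A(f)$ at a codimension-one face $E$ is exactly Allermann--Rau's definition of the Cartier divisor construction, and once you have it, parts (2) and (3) are immediate from bilinearity in the weights $w_i$ and additivity in the slopes $m_i$, just as you say. For these two parts your proof is complete and does not even require (1); the alternative derivation via (1) and Corollary~\ref{asscom}(4) is also fine but circuitous.

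For (1) you correctly isolate the real work: showing that the fan-displacement computation of $\mbox{div}_{\R^n}(f).A$ at a non-transverse face $E$ collapses to the single balancing-defect weight. You do not actually carry this out, and it is not a triviality --- it is essentially the content of the equivalence results in \cite{Rau} and \cite{Katz}. A clean way to close this gap is to note that $\mbox{div}_{\R^n}(f)$ is locally a product of tropical hyperplanes, reduce by (3) to the case where $f$ has a single break along an affine hyperplane $H$, and then observe that both sides compute the same lattice-index-weighted pushforward of $A$ to $\R^n/H$. Alternatively, one can invoke the projection formula for stable intersection together with the fact that the graph $\tilde{A}$ intersects $H_{n+1}$ transversely in $\R^{n+1}$. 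As written, your treatment of (1) is a correct outline rather than a proof.
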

 %
 %
 The following proposition will be needed later on in Proposition \ref{pushModCon}.
\begin{proposition}\label{propDiv}
For functions $f, g \in \K_{\T^n}$ and cycles $A, B \subset \T^n$.  We have,
$$\mbox{div}_{A+B}(f) = \mbox{div}_A(f) + \mbox{div}_B(f).$$
\end{proposition}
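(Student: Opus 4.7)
The plan is to reduce to the case of regular $f$ and then show that the elementary modification is linear in the cycle.

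First, write $f = ``f_1/f_2"$ with $f_1, f_2 \in \Oc_{\T^n}$. By part (2) of Definition \ref{divTn},
\[
\mbox{div}_{A+B}(f) = \mbox{div}_{A+B}(f_1) - \mbox{div}_{A+B}(f_2),
\]
and similarly for $A$ and $B$ separately. Hence it suffices to prove the statement for $f \in \Oc_{\T^n}$ regular.

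For such $f$, let $\delta_f \colon \tilde{C} \to C$ denote the elementary modification along $f$, as in Definition \ref{PmodDef}. I would first argue that
\[
\widetilde{A+B} = \tilde{A} + \tilde{B}
\]
as tropical cycles in $\T^{n+1}$. On a common refinement of the polyhedral structures, the graph $\Gamma_f(A+B)$ has the same support as $\Gamma_f(A) \cup \Gamma_f(B)$, and on each common facet the weight is by definition $w_A + w_B$, i.e.\ exactly the sum of the corresponding weights in $\Gamma_f(\tilde{A})$ and $\Gamma_f(\tilde{B})$. For the undergraph, at any codimension one face $E$ of the common refinement, the defect from balancing is a $\Z$-linear functional of the weights on adjacent facets; consequently the unique integer weight attached to the downward facet $F_E$ in $\mathcal{U}(\Gamma_f(A+B))$ equals the sum of the corresponding weights in $\mathcal{U}(\Gamma_f(A))$ and $\mathcal{U}(\Gamma_f(B))$. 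Putting the graph and undergraph parts together gives the desired identity of cycles.

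Having established additivity of the modification, I apply Proposition \ref{propIntStratum}(2) to the boundary hyperplane $H_{n+1}$:
\[
\widetilde{A+B}.H_{n+1} = (\tilde{A} + \tilde{B}).H_{n+1} = \tilde{A}.H_{n+1} + \tilde{B}.H_{n+1}.
\]
The projection $\delta \colon \T^{n+1} \to \T^n$ is linear in cycles (both its image cycles and weights are computed facet-by-facet on a refinement), so pushing forward yields
\[
\mbox{div}_{A+B}(f) = \delta(\widetilde{A+B}.H_{n+1}) = \delta(\tilde{A}.H_{n+1}) + \delta(\tilde{B}.H_{n+1}) = \mbox{div}_A(f) + \mbox{div}_B(f),
\]
as required. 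The main technical point is the additivity of the undergraph weights; this is really a consequence of the fact that the balancing condition is $\Z$-linear in weights, so once a common polyhedral refinement is fixed everything is automatic.
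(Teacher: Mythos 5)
Your proof is correct and follows essentially the same route as the paper, which simply observes that the claim follows from Definition \ref{divTn} together with Proposition \ref{propIntStratum}; you have filled in the implicit steps (reduction to regular $f$, additivity of the graph and undergraph construction, and additivity of the pushforward) that the paper leaves to the reader.
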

\begin{proof}
This follows directly from Proposition \ref{propIntStratum} and Definition \ref{divTn}. 
\end{proof}
Following part $(1)$ of Proposition \ref{StabDiv} we have:
\begin{corollary}\label{regEff}
If $f \in \Oc(\R^n)$ then 
$\text{div}_{C}(f)$ 
is effective for every effective  cycle $C \subset \R^n$.
\end{corollary}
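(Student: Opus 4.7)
The plan is to reduce this to the fact that the divisor of a regular (hence convex) tropical function on $\R^n$ itself is effective, combined with the observation that stable intersection preserves effectivity.

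First, I would invoke part~(1) of Proposition~\ref{StabDiv} to rewrite
\[
\text{div}_C(f) \;=\; \text{div}_{\R^n}(f).C.
\]
This moves the problem to two separate claims: (a) $\text{div}_{\R^n}(f)$ is itself an effective cycle in $\R^n$, and (b) the stable intersection $D.C$ of two effective cycles in $\R^n$ is effective. Claim (b) is immediate from Definition~\ref{stabRn}: in the transverse case the weight on a facet of $D.C$ is a product of two positive weights with the positive lattice index $[\Z^n : \Lambda_D + \Lambda_E]$, and the general case is a $\Z_{\ge 0}$--linear combination of such transverse contributions obtained by a small generic displacement.

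The main content is therefore claim (a). Here I would unfold Definition~\ref{divTn}: the modification $\delta\colon \widetilde{\R^n}\longrightarrow \R^n$ is built from the graph $\Gamma_f(\R^n)\subset\T^{n+1}$ by attaching undergraph facets $F_E$ in direction $-e_{n+1}$ along each codimension one face $E$ of $\Gamma_f(\R^n)$, and $\text{div}_{\R^n}(f)=\delta(\widetilde{\R^n}.H_{n+1})$. Since $\R^n$ carries the constant weight $1$ and $f=\max_{\alpha\in\Delta}(\langle\alpha,x\rangle+a_\alpha)$ is a convex piecewise integer affine function, I would compute the weight on each undergraph facet $F_E$ as follows. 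Let $\pi\colon\R^{n+1}\to\R^n$ be the projection forgetting $x_{n+1}$, and let $F_1,\dots,F_s$ be the facets of $\Gamma_f(\R^n)$ adjacent to $E$, corresponding to domains of linearity on which $f$ has slopes $\alpha_i\in\Z^n$ with outgoing primitive directions $(v_i,\langle\alpha_i,v_i\rangle)$. Balancing of $\R^n$ gives $\sum v_i\in \mathrm{span}(\pi(E))$, and balancing of $\widetilde{\R^n}$ at $E$ then forces the weight of $F_E$ to equal $\sum_i\langle\alpha_i,v_i\rangle$. Convexity of $f$ says exactly that walking across the bend $\pi(E)$ the directional derivative does not decrease, which gives $\sum_i\langle\alpha_i,v_i\rangle\ge 0$. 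Hence every undergraph facet has a non-negative weight.

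Finally, intersecting with $H_{n+1}$ using Definition~\ref{cycleIntStratum} produces a weight on each codimension one face $F$ at infinity which is a $\Z_{\ge 0}$--combination $\sum_l w_{\widetilde{\R^n}}(\widetilde F_l)[\Z^{n+1}:\Lambda_{\widetilde F_l}+\Lambda_{n+1}^{\perp}]$ of the undergraph weights with positive lattice indices, so it is non-negative; the projection $\delta$ preserves this, and after discarding the zero-weighted facets (which are equivalent to the empty cycle in $Z_{n-1}(\R^n)$) one obtains an effective cycle $\text{div}_{\R^n}(f)$. Combined with (b), this proves the corollary. The one step requiring care is the balancing calculation defining $w_{F_E}$, in particular choosing compatible refinements of the domains of linearity of $f$ on $\R^n$ so that the primitive vectors $v_i$ and slopes $\alpha_i$ are unambiguous; this is the main technical point, though it is standard once the polyhedral structure is fixed.
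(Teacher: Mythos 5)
Your proposal is correct and follows essentially the same route as the paper, which states the corollary as an immediate consequence of part (1) of Proposition \ref{StabDiv}; you have simply made explicit the two implicit ingredients, namely that the divisor of a convex (regular) function on $\R^n$ has non-negative undergraph weights and that stable intersection of effective cycles is effective. Both of your supporting computations are sound, so this matches the paper's (unwritten) argument in substance.
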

Regular modifications can be generalized to rational functions with effective divisors on effective  cycles $C$. The construction of the modification $\tilde{C}$ is the same as in the regular case. The resulting cycle $\tilde{C}$ is effective since the weights of the facets of $\mathcal{U}(\Gamma_f(C))$ correspond to the weights of the facets of $\text{div}_C(f)$,  which is assumed to be effective.  

\begin{definition}\label{modDef}
Given an effective cycle $C \subset \T^n$ and a rational  function $f \in \K_C$ such that $\mbox{div}_C(f)$ is effective,  the elementary modification of $C$ along the function $f$ is $$\delta:\tilde{C} \longrightarrow C,$$
where again $\tilde{C} = \Gamma(C) \cup \mathcal{U}(\Gamma_f(C))$. 
\end{definition}

A \textbf{modification}, respectively \textbf{contraction}, is any composition of elementary modifications, respectively contractions.  
For an elementary modification of a cycle in $\T^n$ we can define pullback and pushforward maps on subcycles.
\begin{definition}\label{pushModCon}
Let $C \subset \T^n$ be an effective cycle, $f \in \K_C$ be a function with effective divisor on $C$ and $\delta: \tilde{C} \rightarrow C$ be the elementary modification along $f$. We define the following:
\begin{enumerate}\label{pushpull}
\item{The pushforward map of cycles, $\delta_{\ast}: Z_k(\tilde{C}) \rightarrow Z_k(C)$  is given by  $\delta_{\ast}A = \delta(A)$ with weight function, 
$$w_{\delta_{\ast}A}(F) = \sum_{F_i \subset A,  \delta(F_i) = F} w_A(F_i)[\bar{\Lambda}_{F_i} : \Lambda_{F}],$$ where $\bar{\Lambda}_{F_i}$ is the image under $\delta$ of the integer lattice generated by $F_i$ and  $\Lambda_{F}$ is the integer lattice generated by $F$. }
\item{The pullback map of cycles  $\delta^{\ast}: Z_k(C) \rightarrow Z_k(\tilde{C})$. For a cycle  $A \in Z_k(C)$, $\delta^{\ast}A$ is the modification of $A$ along the function $f$.}
\end{enumerate}
\end{definition}

Clearly the cycle $\delta^{\ast}A$ is contained in $\tilde{C}$. Notice that $\delta_{\ast}\delta^{\ast}A = A$ but $\delta^{\ast}\delta_{\ast}A$ is not always equal to $A$.   Also, the pullback of an effective cycle may not be effective if the modification of $C$ is given by  a rational function, an example of this can be found in \cite{AllRau} and \cite{MikICM}. Moreover, since the definition of the weight function on the pushforward is additive   $\delta_{\ast}$ is a homomorphism.  The pullback map is also a group homomorphism by 
Proposition \ref{propDiv}.

\comment{We finish this section with a technical lemma:

\begin{lemma}\label{rational}
\textcolor{red}{
Let $C \subseteq \R^n$ be a $k$-cycle which is a fan, let $f$ be a continuous piecewise affine integer sloped function defined on $C$ whose graph is a finite polyhedral complex. Suppose there exists a fan refinement $C^{\prime}$ of $C$ such that $f$ is linear on each cone of $C^{\prime}$, then $f$ is the restriction of a tropical rational function  $``g/h"$ where $g, h$ are tropical polynomials. }
\end{lemma}

\begin{proof}
First the case when $C = \R^n$ is following an argument given by Anders Jensen that has not appeared in the literature. We show for $C = \R^n$ there always exists a convex function $h$ such that $f +h$ is convex. 
Consider the graph $\Gamma_f$ of the function $f$ in $\R^{n+1}$, by the assumptions on $f$ this is a rational polyhedral complex but it is not balanced. Even though the function is not a priori rational on $\R^n$ we may still repeat the  construction of tropical  modification and add weighted facets adjacent to unbalanced codimension one faces of $\Gamma$ so that the resulting complex is balanced. This defines for us a weighted codimension one complex $D \subset \R^n$. The function $f$ is convex and hence given by a tropical Laurent polynomial if all of the facets of $D$ have positive weights. Let $D^-$ denote the collection of facets of $D$ which have negative weights and set $p_f = \sum_{E_i \in D^-} w_{E_i}$ where $-w_{E_i}$ is the weight of $E_i$ in $D$.  
By the rational property of $\Gamma_f$, for $x \in E$ we have $<x, v>= a$ where $v \in \Z^n$ and $a \in \R^n$. Let $h_E(x) = \mbox{max}\{0, w( <x, v> - a) \}$ then $h_E$ is convex and $f + h_E$ is still a continuous piecewise linear integer sloped function on $\R^n$. Moreover, $p_{f+h_E} \leq \sum_{E_i \in D^-} w_{E_i} - w_E$. Letting $h = \sum_{E_i \in D^-} h_{E_i}$ the sum $h + f$ is convex.

Now given a fan $C \subset \R^n$ assume that $f$ is linear on each cone. $C$ can be completed to a fan $\bar{C}$ whose support  is all of $\R^n$. We can also assume the completion is a simplicial fan by taking a barycentric subdivision if necessary. We may also assume that all of the cones of $\bar{C}$ are rational and that $\bar{C} $ is unimodular, by taking a refinement if necessary. Suppose $\sigma \in \bar{C}^{(1)} \backslash C^{(1)}$ then define $f(\sigma) = 0$ and extend $f$ to each cone of $\bar{C}$ by linearity, this is possible since $\bar{C}$ is simplicial. Now $f$ is  defined on all of $\R^n$ and by the above argument $f$ is the restriction a tropical rational function on $\R^n$. 
\end{proof}}

\end{subsection}

\begin{subsection}{Bergman fans of matroids and tropical modifications}\label{sec:Berg}

Here we study tropical modifications in relation to Bergman fans of  matroids. This section provides a correspondence between tropical modifications and existing constructions in matroid theory.  
There are many equivalent ways of describing a  matroid, here we will most often use the rank function. So we write a matroid as $M = (E, r)$ where $E = \{0, \dots N\}$ is the ground set  and $r$ is a rank function,  $r: \mathcal{P}(E) \longrightarrow \N \cup \{0\}$,  satisfying certain axioms, see \cite{Ox}. 
The \textbf{flats} of a matroid are the subsets $F \subset E$ such that the rank function satisfies $r(F)< r(F \cup i)$ for all $i \in E$ not contained in $F$. By convention,  the ground set $E$  is also a flat. The flats of a matroid $M$ form a lattice, which we will denote $\Lambda_M$. 

In this section the focus is on the following basic concepts from matroid theory and their connections to tropical modifications.  We include their definitions for the reader not familiar with matroid theory. Again, for a comprehensive introduction to the subject see \cite{Ox}.

 \begin{definition}
Let $M= (E, r)$ be a matroid where  $E = \{0, \dots , n\}$ 
and $e \in E$, then 

 \begin{enumerate}
\item{The deletion with respect to $i$, $M \backslash i$ is the matroid $(E \backslash i, r|_{E \backslash i})$.}
\item{The restriction with respect to $i$, $M / e$ is the matroid $(E \backslash i, r^{\prime}) $ where $r^{\prime}(I) = r(I \cup i) - r(i)$. }
\item{A matroid $Q$ is a single element  quotient of $M$ if  there exists a matroid $N$ on a ground set $E^{\prime} = E \cup i^{\prime}$ such that $N \backslash i^{\prime} = M$ and $N / i^{\prime} = Q$, and $N$ is called a single element extension of $M$.}
\end{enumerate} 
 \end{definition}

Deletions and restrictions can be performed  with respect to a subset $I \subset E$, these will be denoted $M \backslash I$ and $M / I$ respectively. Also  a matroid $Q$ will be called a quotient of $M$ if there is a matroid $N$ with ground set $E \cup F$ such that $N \backslash F = M$ and $N / F = Q$, and $N$ will simply be called an extension. 

We wish to consider a projective version of the Bergman  fan of a matroid $M$ contained in tropical projective space. 
%
\begin{definition}\cite{MikICM}
Tropical projective space is 
$$\TP^n = (\T^{n+1} \backslash (-\infty, \dots , -\infty) )/ (x_0, \dots , x_n) \sim (x_0 +\lambda, \dots , x_n + \lambda)$$
for $\lambda \in \R$. 
\end{definition}
Tropical projective space is topologically the $n$-simplex. We can equip $\TP^n$ with tropical homogeneous coordinates $[x_0:  \dots : x_n]$ similarly to the classical setting. It may be covered by $n+1$ charts 
$U_i = \{ [x_0: \dots :x_n] \ | \ x_i = 0\} = \T^n$. Moreover, the boundary of $\TP^n$ is stratified;
given $\emptyset \neq I \subseteq \{0, \dots , n\}$ we have a face of the $n$-simplex corresponding to the subset of $\TP^n$ where $x_i = -\infty$ in homogeneous coordinates. Moreover such a face is isomorphic to $\TP^{n- |I|}$. Similar to $\T^n$, a tropical cycle $A$ in $ \TP^n$ is the closure of a cycle in $\R^{n-|I|} \subset \TP^{n-|I|}$ identified as one of the boundary strata of $\TP^n$. 

We now  review of the construction of the Bergman fan of $M$, denoted $B(M)$, in terms of the lattice of flats from \cite{ArdKli}. Recall that a \textbf{loop} of a matroid is an element $i \in E$ that is not contained in any basis. First assume that $M$ is loopless, meaning it contains no loops. 
For $1 \leq i \leq n$ set $v_i = -e_i$ and $v_{0} = \sum_{i=1}^n e_i$, where $e_1, \dots, e_n$ are the standard basis vectors of $\R^n \subset \T^n$. Let $\Lambda_M$ denote the collection of flats of $M$. For every chain $\emptyset \neq F_1 \subset \dots \subset F_k \neq E$  in  $\Lambda_M$ we have a corresponding $k$ dimensional cone in $B(M)$  given by the positive span of $ v_{F_1} , \dots ,  v_{F_k} $ where $ v_{F_l} = \sum_{i \in F_l} v_i$. Finally, $B(M)$ is the closure in $\TP^n$ of the union of all such polyhedral cones. This is the \textbf{fine} polyhedral structure on $B(M)$ as defined in \cite{ArdKli}. 
This construction is a projectivisation of the definitions given  in \cite{SturmFeich}, \cite{ArdKli} up to a reflection caused by the use of the \textit{max} convention instead of \textit{min}. 

\begin{exa}
A geometric example of matroids is to consider a  hyperplane arrangement $\mathcal{A} = \cup_{i=0}^{n} L_i$ in  $\CP^k$. We can define a matroid on $E= \{0, \dots, n\}$, by the rank function $r(E^{\prime}) = \mbox{codim} (\cap_{i \in E^{\prime}} L_i)$. Suppose the rank of the associated matroid is $k+1$ which is equivalent to $\cap_{i=0}^nL_i = \emptyset$.   Each hyperplane is given by a linear form $f_i$. 
 Using these forms we can define a map:
\begin{align*}
\phi: \CP^k & \longrightarrow \CP^n \\
 x \ & \mapsto [f_0(x): \dots : f_n(x)]
\end{align*}

If $M$ is loopless then $\phi(\CP^k) \cap (\C^{\ast})^n = \CP^k \backslash \mathcal{A}$, the complement of the hyperplane arrangement. The logarithmic limit set of $\phi(\CP^k) \cap (\C^{\ast})^n$  is the Bergman fan of the associated matroid $M_{\mathcal{A}}$, see \cite{Berg}, \cite{SturmPoly} for more details.

 Again if $M_{\mathcal{A}}$ is the matroid arising from a hyperplane arrangement $\mathcal{A}$ we can interpret the above operations geometrically. The deletion, $M_{\mathcal{A}} \backslash i$, corresponds to the arrangement given by removing the $i^{th}$ hyperplane, 
$$\mathcal{A}^{\prime} = \mathcal{A} \backslash L_i,$$ and the restriction $M_{\mathcal{A}}  / i$ is the  arrangement on $\CP^{n-1}$ obtained by restricting the arrangement $\mathcal{A}$ to $L_i$. 
$$\mathcal{A}^{\prime \prime} =  \cup_{j \neq i} (L_i \cap L_j).$$ For more on this see Section $1$ of  \cite{OrlikTerao}.

If $i$ is a loop then $\mbox{codim}(i) = 0$ meaning $L_i$ is the degenerate hyperplane defined by the linear form $f_i = 0$, and so  $\phi(\CP^k)$ is contained in the $i^{th}$ coordinate hyperplane of $\CP^n$. 
From this we next define the Bergman fan in $\TP^n$  for a matroid with loops.
\end{exa}

\begin{definition}
Given a matroid $M = (E, r)$,   let $I \subset E$ denote its collection of loops.  Then the complex $B(M)$ is contained in the boundary of $\TP^n$ corresponding to $x_l= -\infty$ for all $l \in I$ and is equal to $B(M \backslash I) \subseteq \TP^{n - |I|}$. 
\end{definition}

By the following lemma all quotients of a matroid $M$ can be represented geometrically as Bergman fans of matroids which are polyhedral subcomplexes of $B(M)$. Here we use the fine polyhedral subdivision from \cite{ArdKli} as described in this section.
\begin{lemma}\label{quot}
A matroid $Q$ is a quotient of $M$ if and only if $B(Q) \subseteq \TP^n$ is a polyhedral subcomplex of $B(M) \subseteq \TP^n$.  
\end{lemma}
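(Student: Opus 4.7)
The plan is to combine a classical matroid-theoretic characterization with the explicit description of the fine polyhedral structure on the Bergman fan recalled above.

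The key matroid fact I will invoke (see \cite{Ox}) is that $Q$ is a quotient of $M$ (both on the same ground set $E$) if and only if every flat of $Q$ is a flat of $M$. Given this, the lemma reduces to reading off flats from the rays of the fine structure on $B(M)$.

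First I would treat the loopless case. Assume both $M$ and $Q$ are loopless. If $Q$ is a quotient of $M$, then any chain of proper non-empty flats $F_1 \subsetneq \cdots \subsetneq F_k$ of $Q$ is also a chain of proper non-empty flats of $M$, so the cone of $B(Q)$ spanned by $v_{F_1}, \dots, v_{F_k}$ is precisely the corresponding cone of $B(M)$. Taking closures in $\TP^n$ shows $B(Q) \subseteq B(M)$ as a polyhedral subcomplex. Conversely, suppose $B(Q) \subseteq B(M)$ is a subcomplex. Every ray of $B(Q)$ is then a ray of $B(M)$, and the assignment $F \mapsto v_F$ is injective on proper non-empty flats because the only linear relation among $v_0, v_1, \dots, v_n$ is $\sum_{i=0}^n v_i = 0$. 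Hence every proper non-empty flat of $Q$ is a flat of $M$; adjoining the trivial flats $\emptyset$ and $E$, every flat of $Q$ is a flat of $M$, and the characterization yields that $Q$ is a quotient of $M$.

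Next I would reduce the general case to the loopless one. For $B(Q) \subseteq B(M)$ to make sense as a subcomplex, one needs $L_M \subseteq L_Q$, since $B(M)$ is contained in the boundary stratum $\{x_i = -\infty : i \in L_M\}$ and $B(Q)$ must lie even further into the stratum $\{x_i = -\infty : i \in L_Q\}$. On the other side, if $Q$ is a quotient of $M$, then $L_Q = \mathrm{cl}_Q(\emptyset)$ is the minimum flat of $Q$, hence a flat of $M$, and therefore contains $L_M$. Contracting by $L_Q$ produces loopless matroids $M/L_Q$ and $Q/L_Q = Q \backslash L_Q$ on $E \setminus L_Q$ (here $M/L_Q$ is loopless precisely because $L_Q$ is a flat of $M$), and $Q$ is a quotient of $M$ if and only if $Q/L_Q$ is a quotient of $M/L_Q$. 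The Bergman fans $B(Q)$ and $B(M) \cap \{x_i = -\infty : i \in L_Q\}$ are the respective Bergman fans of these loopless matroids inside $\TP^{n-|L_Q|}$, so the loopless case closes the argument.

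The main obstacle is the bookkeeping for matroids with loops, since the Bergman fan of such a matroid is defined indirectly by embedding a loopless Bergman fan into a boundary face of $\TP^n$; one must confirm that both the quotient relation and the subcomplex relation are preserved under contracting by the appropriate loop set, so that the clean loopless comparison of chains of flats with cones applies.
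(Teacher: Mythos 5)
Your proposal is correct and follows essentially the same route as the paper: both rest on the characterization (Proposition 7.3.6 of \cite{Ox}) that $Q$ is a quotient of $M$ if and only if $\Lambda_Q \subseteq \Lambda_M$, combined with reading chains of flats off the cones of the fine structure, with the loopless case immediate and the loop case pushed into the boundary stratum of $\TP^n$. Your variant of the loop case (contracting by $L_Q$ and identifying $B(M)\cap\{x_i=-\infty : i\in L_Q\}$ with $B(M/L_Q)$, after observing that $L_Q$ must be a flat of $M$ since $B(Q)$ contains a point of sedentarity exactly $L_Q$) is asserted at the same level of detail as the paper's corresponding claim about which boundary faces of $B(M)$ contain faces of $B(Q)$, so it is an acceptable repackaging rather than a different argument.
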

 \begin{proof}
We may assume $M$ is loopless and that $Q$ is a single element quotient of $M$, since every quotient can be formed by a sequence of single element quotients.  Moreover, by Proposition 7.3.6 of \cite{Ox} $Q$ is a quotient of $M$ if and only if $\Lambda_Q \subseteq \Lambda_M$. 
So supposing $Q$ is loopless, the lemma follows immediately from the above statement and the construction  of $B(M)$ in terms of the lattice of flats. If $Q$ contains loops $L \subset E$, 
then $B(Q)$ is contained in the boundary stratum of $\TP^n$ corresponding to $x_l = -\infty$ for all $l \in I$. A face of $B(Q)$  corresponding to a chain of flats  $I = F_0 \subset F_1 \subset \dots \subset F_{s} \neq \{0, \dots , n\}$ of $\Lambda_Q$, is contained in the boundary of $B(M)$ if and only if the same chain is a chain in $\Lambda_M$ and the lemma is proved. \end{proof}

\comment{
If $Q$ is also loopless  and a quotient of $M$, then $B(Q) \subseteq B(M)$ as complexes follows immediately by the description of $B(M)$ in terms of the lattice of flats. Suppose $Q$ contains a collection of loops, denoted by $I \subseteq E$, then $B(Q)$ is contained in the boundary stratum of $\TP^n$ corresponding to $x_l = -\infty$ for $l \in I$, and $I$ is contained in each flat of $Q$. Given a $s$ dimensional face of $B(Q)$ corresponding to the chain of flats $I = F_0 \subset F_1 \subset \dots \subset F_{s} \neq \{0, \dots , n\}$, the same chain is in the lattice of flats of $M$. Because $M$ is loopless and $I \subset F_i$ for all $1\leq i \leq s$ the face of $B(M)$ corresponding to this chain contains the positive span of $v_I = \sum_{l \in I} v_l$ and so this face intersects the boundary stratum of $\TP^n$ corresponding to the subset $I$ in exactly the initial face of $B(Q)$.

For the other direction suppose $B(Q) \subseteq B(M)$. Again if $Q$ is loopless, the statement holds since this implies $\Lambda_Q \subset \Lambda_M$. 
\comment{ and let $Q$ be loopless to start. Then the pro It suffices to show that every flat of $Q$ is a flat of $M$. By $B(Q) \subseteq B(M)$,  $v_F = \sum_{i \in F} v_i$ must be contained in some face of $B(M)$ corresponding to a chain of flats  $\emptyset \neq F_1 \subset \dots \subset F_k \neq \{0 , \dots , n \}$ in $\Lambda_M$. So, $$v_F = \sum_{l = 1}^k a_lv_{F_l} = \sum_{j \in F_k} c_j  v_j$$ where $c_j = \sum_{l  s.t. \\ j \in F_l} a_l$ for some coefficients $a_l \geq 0$. Comparing this with the expression,  $v_F = \sum_{i \in F} v_i$ it follows from linear algebra that $F$ must be equal to some flat in the chain $F_1 \subset \dots \subset F_k$. }
If the matroid $Q$ has loops and $B(Q) \subseteq B(M)$ as complexes,  then $B(Q)$ is contained in the intersection of $B(M)$ and some boundary stratum of $\TP^n$ corresponding to the loops $I \subseteq E$ of $Q$. 
Every flat of $Q$ corresponds to cone of $B(Q)$ given by $\sum_{i \in F \backslash I} v_i + \sum_{l \in I} -\infty v_l$. For $B(M)$ to contain this face of $B(Q)$ when $M$ is loopless there must be a face of $B(M)$ containing the positive span of $v_I = \sum_{l \in I} v_l$ and also the $\sum_{i \in F \backslash I} v_i$. This means a chain in $\Lambda_M$ of the form $I \subset F_1 \subset \dots \subset F_k \neq \{0, \dots , n\}$. Such that $v_{F \backslash I}$ is contained in the face generated by $\emptyset \neq F_1 \backslash I \subset \dots \subset F_k \backslash I \neq \{0, \dots , n\} \backslash I$. By the above argument in the case of no loops we have $F = F_t$ for some flat in the chain in $\Lambda_M$. }

 The next  proposition relates tropical modifications, contractions and divisors to matroid extensions, deletions and restrictions, respectively.
Recall that an element $i \in E$ is a \textbf{coloop} of a matroid $M = (E, r)$ if $i$ is contained in every basis of $M$, i.e. $i \in B$ for every $B \subset E$ for which  $r(B) = |B| = r(E)$. If a matroid $M$ contains $m$ coloops then the corresponding Bergman fan $B(M) \subset \TP^n$ contains an $m$ dimensional subspace of  $\R^n$.

\begin{proposition}\label{matroidMod}
Let $M$ be a rank $k+1$ matroid on the ground set $E = \{ 0, \dots , n \}$. Suppose $i \in E$ is neither a loop nor a co-loop, then in every chart $U_j = \{ x \in \TP^n \ | \ x_j \neq -\infty \}  = \T^n \subset \TP^n$ there is an elementary  tropical modification $$\delta_j: B(M) \cap U_j \longrightarrow B(M \backslash i) \cap U_j$$ with corresponding divisor $B(M / i) \cap U_j$. 
\end{proposition}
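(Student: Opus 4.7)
The plan is to define $\delta_j$ as the linear projection $\T^n \to \T^{n-1}$ that forgets the $i$-th coordinate on $U_j \cong \T^n$, and to realize $B(M) \cap U_j$ as the elementary tropical modification of $B(M \setminus i) \cap U_j$ along a piecewise linear function $f$ that I would construct explicitly. To specify $f$: for a flat $F'$ of $M \setminus i$, its canonical lift to $\Lambda_M$ is either $F'$ itself (when $i$ is not in the $M$-closure of $F'$) or $F' \cup i$. On each maximal cone of $B(M \setminus i) \cap U_j$ I would define $f$ by linearity from $f(v_{F'}) = -1$ if the canonical lift contains $i$ and $f(v_{F'}) = 0$ otherwise. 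Geometrically, $f$ reads off the $i$-th coordinate of the unique point in $B(M) \cap U_j$ lying over $v_{F'}$ under $\delta_j$. One needs to check that $f$ glues consistently across codimension-one cones, which is immediate since each $v_{F'_l}$ lies in both neighbouring maximal cones.

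The main combinatorial input is the correspondence $F \mapsto F \setminus i$ between $\Lambda_M$ and $\Lambda_{M \setminus i}$, together with the identification of $\Lambda_{M/i}$ with the sublattice of flats of $M$ containing $i$. Using this, I would verify that the graph $\Gamma_f$ of $f$ on $B(M \setminus i) \cap U_j$ together with its undergraph $\mathcal{U}(\Gamma_f)$ reproduces $B(M) \cap U_j$ as a weighted polyhedral complex: cones of $B(M)$ coming from chains of flats that do not all contain $i$ project homeomorphically onto their images and contribute to $\Gamma_f$, whereas cones from chains of flats all containing $i$ have $\delta_j$-fibres running off in the $-e_i$ direction and contribute to $\mathcal{U}(\Gamma_f)$. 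The non-loop hypothesis ensures that $B(M)$ is not already contained in the stratum $\{x_i = -\infty\}$, so that $f$ is not identically $-\infty$; the non-coloop hypothesis ensures that the set of flats of $M$ strictly containing $i$ is nonempty, so that the modification is non-trivial.

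The remaining and most delicate step is to identify $\operatorname{div}_{B(M \setminus i) \cap U_j}(f)$ with $B(M/i) \cap U_j$. Its support consists of codimension-one cones of $B(M \setminus i)$ where $f$ fails to be smooth, which I would show correspond precisely to chains $F'_1 \subsetneq \dots \subsetneq F'_{k-1}$ in $\Lambda_{M \setminus i}$ whose canonical lifts all contain $i$ (these are exactly the cones on which multiple chains of $\Lambda_M$ sit with inequivalent $i$-coordinate behaviour). Via $F'_l \mapsto F'_l \cup i$ such chains match maximal chains in $\Lambda_{M/i}$, giving the required identification of supports with $B(M/i) \cap U_j$, consistent with Lemma \ref{quot}. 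The hard part will be matching weights: by Definition \ref{divTn}, the weight of a facet $E$ of $\operatorname{div}_C(f)$ equals the weight of $F_E$ in the undergraph, which is the balancing deficiency of $\Gamma_f$ at $E$. I expect this deficiency to reduce, by projecting the balancing relation for $B(M)$ at the corresponding codimension-one cone of $B(M)$, to the statement that every facet of $B(M/i)$ carries weight one in the fine polyhedral subdivision. This last verification is a direct computation with the explicit primitive generators $v_F$ of the cones involved.
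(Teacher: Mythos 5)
Your overall strategy---write down the modification function $f$ explicitly as the ``height'' of $B(M)$ over $B(M\backslash i)$, check that its graph together with the undergraph recovers $B(M)\cap U_j$, and identify the corner locus with $B(M/i)\cap U_j$---is a legitimate route, and it is more explicit than the paper's: the paper never constructs $f$, but instead shows combinatorially that the projection $\delta_i$ carries $B(M)\cap U_j$ onto $B(M\backslash i)$ with the contracted faces landing exactly on $B(M/i)$, and then invokes Lemma \ref{lem:WeilCartier} to produce $f$ abstractly, pinned down by the requirement $\Gamma_f(B(M\backslash i)\cap U_j^{\prime})\subset B(M)\cap U_j$. However, your explicit formula for $f$ is wrong, and the error is not cosmetic. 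The two descriptions you give of $f(v_{F^{\prime}})$ do not agree: the geometric one (the $i$-th coordinate of the point of $B(M)\cap U_j$ over $v_{F^{\prime}}$) is the correct one, but the combinatorial rule ``$-1$ if the canonical lift contains $i$, $0$ otherwise'' fails for every flat $F^{\prime}$ containing the chart index $j$. In the chart $U_j$ one has $v_{F^{\prime}}=v_j+\sum_{l\in F^{\prime},\,l\neq j}v_l$ with $v_j=\sum_{l\neq j}e_l$, so the $i$-th coordinate of the lifted ray is $+1$ when $i$ is not in the $M$-closure of $F^{\prime}$ and $0$ when the lift is $F^{\prime}\cup i$, not $0$ and $-1$. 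The discrepancy (the indicator of $j\in F^{\prime}$) is not the restriction of an affine-linear function, so it genuinely changes the divisor.

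Concretely, take $M$ of rank $3$ on $\{0,1,2,3\}$ with $\{1,2,3\}$ a rank-two flat (three collinear points plus one point off the line), $i=3$, chart $U_0$. Then $B(M\backslash 3)=\R^2$, and the true modification function is $f=\max\{x_1,x_2\}$, whose graph plus undergraph is $B(M)\cap U_0$ and whose divisor is the line $\{x_1=x_2\}$ with weight one, i.e.\ $B(M/3)$. Your rule assigns $0$ to $v_{\{0\}},v_{\{0,1\}},v_{\{0,2\}}$ (where the correct value is $+1$) and yields $\max\{x_1,x_2\}-\max\{0,x_1,x_2\}$; its graph is not contained in $B(M)$ (e.g.\ it passes through $(2,0,0)\notin B(M)$), and its divisor is the non-effective cycle with rays $-e_1,-e_2$ of weight $-1$ and $-e_1-e_2$ of weight $+1$, not $B(M/3)$. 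A secondary inaccuracy: the dichotomy ``all flats of the chain contain $i$'' versus ``not all'' does not detect which cones of $B(M)$ are vertical for $\delta_j$; in the same example the cone of the flag $\{0\}\subset\{0,3\}$ is vertical although $\{0\}$ does not contain $3$, while the cone of $\{1\}\subset\{1,2,3\}$ is not vertical although its top flat contains $3$. The correct criterion is that $F_{l+1}=F_l\cup\{i\}$ for some $l$ (allowing $F_0=\emptyset$). Both defects are repairable---define $f(v_{F^{\prime}})$ as the actual $i$-th coordinate of the ray of the $M$-closure of $F^{\prime}$ in the chart $U_j$, and redo the case analysis of vertical cones---but as written the key identification $\mbox{div}(f)=B(M/i)\cap U_j$ fails.
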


\begin{proof}
For the lattice of flats of deletions and restrictions we have:
\begin{align*}
\Lambda_{M \backslash i} = \{F \subseteq E \backslash i \ | \ F \ \mbox{or} \ F \cup i \ \mbox{is a flat of M}\} \\
 \Lambda_{M / i} =  \{F \subseteq E \backslash i \ | \ F \cup i \ \mbox{is a flat of M}\}.
\end{align*}
Let $\delta_i: \T^n \longrightarrow \T^{n-1}$ be the projection in the direction of $e_i$. Then the image under 
$\delta_i$ of a $k$-dimensional cone of $B(M) \cap U_j$ corresponding to a chain of flats   $F_1 \subset \dots \subset F_k$ is still a $k$ dimensional cone if and only if $i \not \in F_k$. In other words, if and only if the corresponding chain is a chain of flats of $\Lambda_{M \backslash i}$. Therefore, we have $$\delta(B(M) \cap U_j) =  B(M \backslash i ) \cap U_j^{\prime},$$ where $U_j^{\prime}$ is a chart of $\T^{n-1}$.  
In addition, $\delta$ contracts a  $k$-dimensional face of $B(M) \cap U_j$ if and only if $i \in F_k$. Thus the image of all contracted faces is exactly $B(M / i) \cap U_j^{\prime} \subset B(M \backslash i) \cap U_j^{\prime}$. 

By the next lemma the codimension one cycle $B(M / i) \cap U_j^{\prime}$ must be the divisor of a tropical rational function $f$ on $B(M \backslash i ) \cap U^{\prime}_j$. Then up to tropical multiplication by a constant (addition) this function  must satisfy $$\Gamma_f(   B(M \backslash i ) \cap U^{\prime}_j  ) \subset B(M) \cap U_j$$  and so it must be the function of the modification $\delta$. 
%
%
\end{proof}

\begin{lemma}\label{lem:WeilCartier}
Let $B(M) \subset \TP^n$ be the Bergman fan of a matroid, and $V = B(M) \cap U_i \subset \T^n$ for some $i \in \{0, \dots, n\}$. If  $D \subset V$  is a codimension one tropical subcycle then there exists a tropical rational function $f \in \K_{\T^n}$ such that $\text{div}_{V}(f) = D$. 
\end{lemma}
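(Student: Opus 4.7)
My plan is to construct a continuous piecewise integer affine function $\phi$ on $V$ whose tropical divisor in the sense of Definition \ref{divTn} equals $D$, and then to realise $\phi$ as the restriction to $V$ of a tropical rational function $f \in \K_{\T^n}$. As a first reduction I replace the fine polyhedral structure on $V$ by a rational fan refinement in which $D$ appears as a subcomplex of the codimension one skeleton, and extend the weight function of $D$ by zero on codimension one faces not in $|D|$.

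The combinatorial heart of the proof is the construction of $\phi$. Declaring $\phi$ to be linear on each maximal cone of this refined $V$, the function is determined by the integer values it takes on the rays, and the condition $\text{div}_V(\phi)=D$ becomes a system of linear equations on those values, one per codimension one face $E\subset V$: each equation requires that the bending of $\phi$ across $E$, measured by the slopes on the adjacent maximal cones and weighted as in Definition \ref{bal}, equal $w_D(E)$ times the primitive vector transverse to $E$ in $V$. The local compatibility of this system at each codimension two cone of $V$ is precisely the balancing of $D$ at the corresponding codimension one face, so compatibility is automatic. Upgrading local compatibility to the existence of a global integer solution uses that the link of the cone point in a matroidal fan is simply connected, which follows from the shellability of the order complex of a geometric lattice; integrality of the weights of $D$ then forces $\phi$ to be integer-sloped.

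Finally I extend $\phi$ from $V$ to $\T^n$ by completing the fan structure on $V$ to a rational simplicial fan $\Sigma$ with support all of $\R^n$, assigning the value $0$ to each new ray, and extending linearly on every maximal cone of $\Sigma$. The resulting function on $\R^n$ is continuous, piecewise linear and integer sloped on a rational fan, and a standard argument (add a convex auxiliary tropical polynomial to make it convex) expresses it as a difference $``p/q"$ of two tropical polynomials, yielding the required $f\in\K_{\T^n}$ with $\text{div}_V(f)=D$. The main obstacle is the middle step: the local solvability of the bending equations at every codimension two cone of $V$ is immediate from balancing, but passing from local to global solutions relies crucially on the matroidal structure of $V$ through the topology of the order complex of the flats of $M$.
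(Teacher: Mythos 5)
Your overall strategy (build a piecewise integer affine $\phi$ on $V$ with prescribed bending $D$, then globalise $\phi$ to a tropical rational function on $\R^n$ by completing the fan and convexifying) is a legitimate alternative to the paper's argument, and your final step is fine. But the middle step, which you yourself identify as the heart of the proof, has a genuine gap in both of its halves. First, the assertion that local solvability of the bending equations around a codimension two cone $\tau$ of $V$ ``is precisely the balancing of $D$'' is not automatic: the star of $\tau$ is $\R^{k-2}\times W$ for a two-dimensional matroidal fan $W$, and what you need there is exactly the statement that every balanced $1$-cycle supported on $W$ is the divisor of a piecewise linear function on $W$ --- i.e.\ a local instance of the lemma you are proving. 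This is true for matroidal fans but false for general balanced fans, so it cannot be dismissed as ``automatic''; it needs its own argument using the matroid structure (e.g.\ via the lattice of flats or via a contraction $W\to\R^2$). Second, and more seriously, passing from local to global solutions is not a consequence of simple connectivity of the link. The obstruction to patching local solutions $\phi_\tau$ is a \v{C}ech $1$-cocycle with values in the sheaf of piecewise linear functions with zero divisor (locally, restrictions of linear functions), which is not a constant sheaf; simple connectivity kills $H^1$ with constant coefficients only. What you actually need is the vanishing of $H^1$ of this structure-type sheaf on the matroidal fan, which is a substantive theorem in its own right --- invoking ``shellability of the order complex'' gestures at the right circle of ideas but does not constitute a proof.

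For comparison, the paper avoids this cohomological difficulty entirely by inducting along tropical modifications: choosing an elementary open matroidal contraction $\delta:V\to V'$ with divisor $D'$, it writes $\delta_*D$ as the divisor of a rational function $f$ on $V'$ (induction on codimension), observes that $\delta^*\delta_*D-D$ lies in the undergraph and hence in $D'\times\R$ (Lemma \ref{undergraph}), realises it there as $\mbox{div}(g)$ (induction on dimension), and sets $D=\mbox{div}_V(\tilde f-g)$; the base cases are $\T^n$ itself (where non-effective cycles are handled by adding hyperplane divisors to reduce to the effective case) and affine subspaces. If you want to pursue your direct approach, you would need to either prove the $H^1$-vanishing statement or replace the patching argument by an explicit induction of this kind. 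Two smaller points: your construction takes place entirely in $\R^n$ and so misses the components of $D$ of positive sedentarity (these are easy --- divisors of monomials $x_j^w$ --- but must be mentioned), and you should check that the bending condition at a codimension one face of $V$ is a single well-posed scalar equation, which uses that $V$ itself is balanced there.
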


\begin{proof}
First suppose $V = \T^n$, and  that $D$ has order of sedentarity $0$, then the statement is equivalent to showing that every codimension one cycle in $\R^n$ is the divisor of a tropical function $f \in \K_{\R^n}$. If $D$ is effective, it is a tropical hypersurface and is given by a tropical polynomial by \cite{FirstSteps}. When $D$ is not effective the following argument is due to an idea of Anders Jensen. 
Let $D^-$ denote the collection of facets of $D$ which have negative weights. 
For a face $E$ in $D^-$, there exists a $v \in \Z^n$ and $a \in \R$ such that   $<x, v>= a$ for all $x \in E$. Define a regular function $h_E: \T^n \longrightarrow \T$, 
 by $$h_E(x) =  \mbox{max}\{0, -w_E( <x, v> - a) \} $$ where $w_E<0$ is the weight of $E$ in $D$. The function $h_E$ is given by the tropical polynomial  = $``ax^{-w_E v} + 1_{\T}", $ and  $\text{div}_{\T^n}(h_E)$ is an affine hyperplane containing $E$ and equipped with positive weight $-w_E $.  
Let $h:\T^n \longrightarrow \T$ be given by $h(x) = \sum_{E \in D^-} h_E(x)$, this corresponds to the tropical product of the tropical polynomials, so $h$ is again a tropical polynomial. Moreover, $D + \text{div}_{\T^n}(h)$ is an effective cycle of order of sedentarity zero in $\T^n$, and thus is the divisor of a tropical polynomial $f$.  By part $(3)$ of  Proposition \ref{StabDiv}, $D = \text{div}_{\T^n}(f-h)$, and the difference $f - h$ is a tropical rational function. 

If $D$ does not have order of sedentarity $0$ then $D$ contains boundary hyperplanes of $\T^n$ corresponding to  $x_i = -\infty$ for a choice of $i$, equipped with an integer weight. This boundary cycle  is the divisor of the Laurent monomial $x_i^w$. Because $D$ is of codimension one, it decomposes into a sum of cycles of order of sedentarity $0$ or $1$ so we are done. 

Now if the fan $V \subset \T^n$ is the closure of  a $k$-dimensional subspace  in $\R^n$ and  $D \subset V$ a  codimension one cycle. 
There is a unique surjective linear projection $\delta:V \longrightarrow \T^k$ with kernel generated by standard basis directions.  The image $\delta(D) \subset \T^k$ is isomorphic to $D$ as an integral polyhedral complex. Moreover equipped with the weights from $D$, $\delta(D)$ is a balanced codimension one cycle in $\T^k$. Therefore, it is the divisor of a tropical rational function $f$ on $\T^k$. 
Let $\tilde{f} $ be the pullback of this function to $\T^n$. It is again a tropical rational function and we have $\text{div}_V(\tilde{f}) = D$.

For the general case, take a linear projection $\delta: V \longrightarrow V^{\prime}$, with kernel generated by $e_i$. Denote the divisor of $\delta$ by $D^{\prime} \subset V^{\prime}$. We may assume by induction that a  codimension one cycle in $V^{\prime}$ (and similarly,  $D^{\prime} \times \R$) is the divisor of a tropical rational function. 
Recall the pushforward and pullback maps defined for an elementary modification in Definition \ref{pushModCon}. Then the cycle $\delta_*D \subset V^{\prime}$ is the divisor of a tropical rational function $f$ and  the cycle $\delta^*\delta_*D$ is the divisor of the pullback  $\tilde{f} = f(\delta)$.  The difference $\delta^*\delta_*D - D$ is a cycle contained in the undergraph of $\delta$ and may be considered as a cycle in $D^{\prime} \times \R$ (for details see Lemma \ref{undergraph} of the next section). So, $\delta^*\delta_*D - D$ is the divisor of some tropical rational function $g$ on $D^{\prime} \times \R$. Moreover, we may choose $g$ so that  $D = \text{div}_V(\tilde{f}-g)$, so the claim is proved. 
\end{proof}

The above lemma shows that the tropical analogues of  Weil divisors and Cartier divisors on the Bergman fan of a matroid are equivalent.  However, effective tropical codimension one cycles are not always given by regular tropical functions. Examples of this appear in \cite{MikICM} and \cite{AllRau} and also in Example  \ref{M0n} at the end of this section. 

We remark that even when $i$ is a loop the above proposition holds, but in a particular sense where the function on $B(M\backslash i)$ producing the modification is the constant function $f = -\infty$. The divisor of such a function is all of $B(M \backslash i)$ which is equal to $B(M/i)$, if $i$ is a loop.   

A \textbf{basis} of a matroid $M = (E, r)$ is a subset $B \subseteq E$ such that $|B| = r(B) = r(E)$. 

\comment{
\begin{corollary}
Given a loopless matroid $M$,  the restriction $M / i $ contains loops if and only if  in $U_j$ there corresponding to the modification  $\delta_j: B(M) \cap U_j \longrightarrow B(M \backslash i) \cap U_j$ is the extension to $\T^n$ of an affine linear function on $\R^n$. 
\end{corollary}}
%
%
\begin{corollary}\label{fullContraction}
Given a $k$-dimensional Bergman fan $B(M) \subseteq \TP^n$, every contraction $\delta:B(M) \longrightarrow \TP^k$ corresponds to a choice of basis of $M$. 
\end{corollary}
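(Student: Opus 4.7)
The plan is to translate the geometric statement into matroid language via Proposition \ref{matroidMod}, which identifies elementary contractions with matroid deletions. Unwinding the definition of a contraction, any $\delta: B(M) \longrightarrow \TP^k$ factors as a sequence of $n-k$ elementary contractions that delete elements $i_1,\dots,i_{n-k}$, producing $B(M \backslash S) \subseteq \TP^k$ with $S = \{i_1,\dots,i_{n-k}\}$. The condition that the target is all of $\TP^k$ is what selects which deletion sets $S$ are admissible.

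I would then identify the matroids on $k+1$ elements whose Bergman fan exhausts $\TP^k$. Since the Bergman fan of a rank $r$ matroid is pure of dimension $r-1$, filling $\TP^k$ forces $r(M \backslash S) = k+1$; on a ground set of size $k+1$ the only such matroid is the free matroid $U_{k+1,k+1}$. Hence $J := E \backslash S$ is independent in $M$ of size $r(M) = k+1$, i.e.\ a basis. Conversely, given any basis $J$ of $M$, I would build a contraction by deleting the elements of $E \backslash J$ one at a time in an arbitrary order; loops appearing in $E \backslash J$ are accommodated by the remark immediately following Proposition \ref{matroidMod}, and the composition is independent of the order since it is just the coordinate projection of $\TP^n$ with kernel spanned by the $e_i$ for $i \in E \backslash J$.

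The main obstacle, and the only point requiring genuine verification, is to check that at each intermediate stage of the converse construction the element being deleted is not a coloop of the current matroid, so that Proposition \ref{matroidMod} actually applies. This reduces to a short rank calculation: since $J$ remains contained in every intermediate ground set and is still independent there, the rank of the intermediate matroid is always $r(M)$, and therefore no element outside $J$ can be a coloop at any stage. With this in hand, the forward and backward directions match up, yielding a bijection between contractions $B(M) \longrightarrow \TP^k$ and bases of $M$.
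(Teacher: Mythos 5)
Your argument is correct and takes essentially the same route as the paper: identify $\TP^k$ with $B(U_{k+1,k+1})$, note that deleting down to a rank-$(k+1)$ matroid on $k+1$ elements forces the remaining set to be a basis, and observe that rank drops under deletion exactly when a coloop is removed, which is not a tropical contraction. You are in fact slightly more careful than the paper's two-line proof, since you explicitly check that no intermediate coloop arises when deleting the complement of a basis.
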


\begin{proof}
Given a basis $B$ of $M$ the deletion $M \backslash B^c$ produces the uniform matroid $U_{k+1, k+1}$ corresponding to $\TP^k$. If we delete along a set which is not the complement of some basis then we decrease the rank of the matroid, meaning at some step we deleted a coloop. This does not correspond to a tropical contraction. 
\end{proof}

From now on the focus will be on matroidal fans and matroidal contraction charts. To simplify the notation we will drop the use of $B(M)$ and just insist that a fan is matroidal, we will only recall the underlying matroid when necessary.

\begin{definition}
We call a fan $V \subset \T^n$ $($or $\R^n)$ matroidal if there exists a matroid $M$ on $n+1$ elements such that $V = B(M) \cap U_i$ for some coordinate chart $U_i = \{x_i \neq -\infty\} \subset \TP^n$, $($or $V = B(M) \cap \R^n)$.
\end{definition}

In the next section we will be concerned only with matroidal fans in $\R^n$. 
For this we make clear the notion of \textbf{open matroidal tropical modifications}.

\begin{definition}
Let $V \subset \R^{n}$ and $V^{\prime} \subset \R^{n-1}$ be matroidal fans. An elementary open matroidal tropical modification is  a modification $$\delta: \tilde{V} \longrightarrow V,$$ where $V \subset \R^n$, and $\tilde{V}\subset \R^{n+1}$ are matroidal and the divisor $D \subset V^{\prime}$ of the modification is also matroidal or empty. 
\end{definition}

As mentioned in the introduction, an open elementary matroidal  modification $\delta:\tilde{V}\longrightarrow V$  along a function $f$ should be thought of as a embedding of $V$ with the divisor  $\text{div}_V(f)$ removed.  As before, an open matroidal modification is a composition of elementary matriodal open modifications.

Let $V \subset \R^n$ be a $k$-dimensional Bergman fan of a matroid $M$.
Let $\textbf{K}$ be the field of Puiseux series with coefficients in a field $\textbf{k}$ of characteristic $p$. 
We say $V$  is \textbf{realisable} over a field of characteristic $p$ 
if there exists a $k$-plane $\textbf{V} \subset  (\textbf{K}^*)^n$ such that $\text{Trop}(\textbf{V})=V$, (see for example \cite{St7} for definitions of $\text{Trop}$ of an algebraic variety). This is equivalent to the corresponding  matroid $M$ being realisable in characteristic $p$, see \cite{tropGrass}.

\begin{figure}

\includegraphics[scale=0.6]{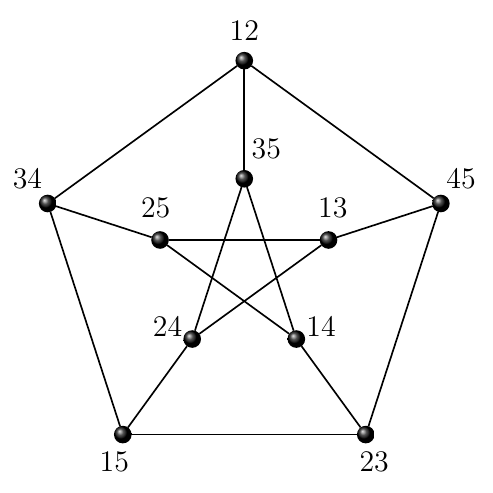}
\put(-95, -20){$\mathcal{M}_{0,5} \subset \R^5$}
\hspace{0.5cm}
\includegraphics[scale=0.6]{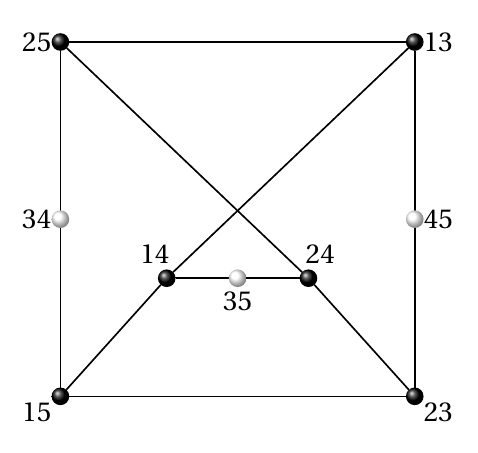}
\put(-85, -20){$V \subset \R^4$}
\vspace{0.5cm}

\includegraphics[scale=0.6]{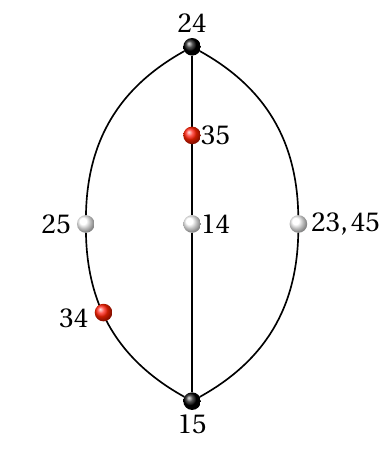}
\put(-75, -20){$V^{\prime} \subset \R^3$}
\hspace{.8cm}
\includegraphics[scale=0.6]{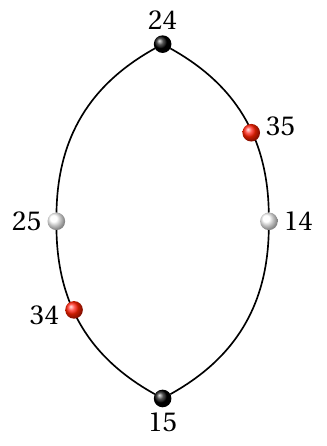}
\put(-55, -20){$\R^2$}
\hspace{0.4cm}
\caption{The link about the origin (or Bergman complex \cite{SturmPoly}) of the sequences of modifications producing $\mathcal{M}^{trop}_{0,5}$. The divisor at each step is marked in white, the $i j$ indicate the cone of $\mathcal{M}^{trop}_{0,5}$ corresponding to a combinatorial type of curve, see \cite{mikMod}, \cite{GathKerbMark} . \label{M05}}

\end{figure}

\vspace{0.3cm}

\begin{proposition}Let   $V_M  \subset \R^k$ be a $k$-dimensional matroidal fan corresponding to the matroid $M$. If $V_M$ is obtained from $\R^k$  by a sequence of elementary regular matroidal modifications 
then $V_M$ is  realisable over a field of characteristic zero. 
\end{proposition}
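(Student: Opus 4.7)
The plan is to proceed by induction on the number $m$ of elementary regular matroidal modifications in the sequence producing $V_M$ from $\R^k$. For the base case $m = 0$, the fan $V_M = \R^k$ is the tropicalisation of $(\textbf{K}^*)^k$ where $\textbf{K} = \C\{\{t\}\}$ is the field of Puiseux series with complex coefficients, so realisability is immediate.

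For the inductive step, write $V_M = \tilde V$ as an elementary regular matroidal modification $\delta : \tilde V \longrightarrow V$ along some function $f \in \Oc_V$, and assume by induction that $V$ is realised over $\textbf{K}$ by a linear subvariety $\textbf{V} \subset (\textbf{K}^*)^n$ with $\text{Trop}(\textbf{V}) = V$. Since $f$ is regular, it is a tropical Laurent polynomial $f(x) = ``\sum_{\alpha \in \Delta} a_\alpha x^\alpha"$. I would lift $f$ to a classical Laurent polynomial $\textbf{f}(x) = \sum_{\alpha \in \Delta} c_\alpha x^\alpha$ with $\text{val}(c_\alpha) = a_\alpha$ and with the leading coefficients chosen generically in $\C$. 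Define
$$\tilde{\textbf{V}} := \{(x, \textbf{f}(x)) \mid x \in \textbf{V},\ \textbf{f}(x) \neq 0\} \subset (\textbf{K}^*)^{n+1},$$
which is the classical analogue of the graph construction underlying tropical modifications.

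The key claim to verify is that $\text{Trop}(\tilde{\textbf{V}}) = \tilde V$, with matching weights. Intuitively, $\text{Trop}(\tilde{\textbf{V}})$ consists of a graph portion, which tropicalises to $\Gamma_f(V) \subset \tilde V$, and a boundary contribution coming from the vanishing locus of $\textbf{f}$ on $\textbf{V}$, which accounts for the undergraph $\mathcal U(\Gamma_f(V)) \subset \tilde V$. The matroidal divisor hypothesis from the definition of a regular matroidal modification says that $\text{div}_V(f)$ is a matroidal fan, so the weights on the undergraph facets are determined by those of $\text{div}_V(f)$; what remains is to match these with $\text{Trop}(\text{div}_\textbf{V}(\textbf{f}))$.

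The main obstacle will be precisely this tropicalisation identity: showing $\text{Trop}(\text{div}_\textbf{V}(\textbf{f})) = \text{div}_V(f)$ as weighted cycles, not only as sets. Here the characteristic zero hypothesis is essential, as it permits choosing the leading complex coefficients of $\textbf{f}$ generically enough to avoid the arithmetic cancellations that, over positive characteristic or non-generic lifts, can create spurious components or raise multiplicities when restricting $\textbf{f}$ to $\textbf{V}$. Once the weight identity is secured, $\tilde{\textbf{V}}$ realises $\tilde V$ over a characteristic zero field and the induction closes.
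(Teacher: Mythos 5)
Your overall strategy coincides with the paper's: induct on the length of the sequence of modifications and realise each elementary step by the graph of a classical lift $\textbf{f}$ of the tropical function $f$ restricted to a realisation $\textbf{V}$ of the contracted fan. The difference lies in how the crucial step is handled, and this is where your argument has a genuine gap. You correctly isolate the identity $\text{Trop}(\text{div}_{\textbf{V}}(\textbf{f})) = \text{div}_V(f)$ (as weighted cycles) as the heart of the matter, but you do not prove it --- you assert that a generic choice of leading coefficients of $\textbf{f}$ makes it hold. This is not a formality: in general one only has a containment $\text{Trop}(\textbf{V} \cap \{\textbf{f}=0\}) \subseteq \text{Trop}(\textbf{V}) \cap \text{Trop}(\{\textbf{f}=0\})$, and the statement that generic lifts achieve the stable intersection with the correct multiplicities is a theorem. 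Moreover, such genericity statements typically require perturbing \emph{both} factors, whereas you keep the realisation $\textbf{V}$ handed to you by the inductive hypothesis fixed and vary only the coefficients of $\textbf{f}$; you would need to justify that this restricted freedom suffices. The paper closes exactly this gap by citing Theorem 4.3 of Speyer: since $\text{div}_V(f)$ is the stable intersection of $V$ with the matroidal hypersurface $V_f = \text{div}_{\R^{n-1}}(f)$, and both are realisable, the stable intersection is realised by the honest intersection of suitable classical realisations; the graph of the corresponding linear form then realises the modification. Replacing your genericity claim with this citation (or an equivalent result on tropicalisations of generic intersections) would bring your argument in line with the paper's.

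A secondary imprecision: in this paper ``realisable'' means realised by a $k$-\emph{plane} in $(\textbf{K}^*)^n$, so the inductive construction must produce linear subvarieties at every stage. Your lift of $f$ to an arbitrary generic Laurent polynomial would destroy linearity of $\tilde{\textbf{V}}$ and break the induction. This is repairable --- the hypothesis that $\text{div}_{\R^{n-1}}(f)$ is matroidal forces $f$ to be, up to a monomial factor, a tropical linear form, so $\textbf{f}$ can be taken to be a classical linear form and the graph is again a plane --- but it should be said.
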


\begin{proof}
Let $\delta:V \longrightarrow V^{\prime}$ be the first  elementary open regular matroidal modification of a sequence, and let $D \subset V^{\prime}$ be the corresponding divisor.  Then $V^{\prime}$ corresponds to the matroid $M\backslash i$ for some $i \in E$ the ground set of $M$. Also $D $ corresponds to the matroid $M / i$. 
By induction we may assume that $V^{\prime}$  is  realisable over a field of characteristic zero.   Without loss of generality we may also suppose that $D \neq \emptyset$ and  that there is a regular tropical function $f$ with  $\text{div}_{V^{\prime}}(f) = D$ and such that  $\text{div}_{\R^{n-1}}(f) = V_f$ is matroidal. 
Then the  cycle $V_f$ defined by $f$,  is also realisable in the above sense.
  By Theorem $4.3$ from \cite{Speyer}, the tropical stable   intersection of two matroidal  is always realisable over the field of Puiseux series with coefficients in $\C$. So the modification of $V^{\prime}$ with center $D$ is realisable by the graph of the function giving $\textbf{D}$ restricted to  $\textbf{V}^{\prime}$, where  $\textbf{D}$  and  $\textbf{V}^{\prime}$ realize $D$ and $V^{\prime}$ respectively. 
\end{proof}

\vspace{0.3cm}
\begin{remark}
Bergman fans obtained via modification by regular functions \textit{do not} correspond to regular matroids, where regular means being realisable over every field. For instance the matroid $U_{2, 4}$ which is not realisable over the field $\mathbb{F}_2$ corresponds to the four valent tropical line in $\TP^3$ which can be obtained by modifications along regular functions. 
\end{remark}

\vspace{0.3cm}
Modification along regular functions is sufficient to ensure realisability, but it is by no means necessary.

\vspace{0.5cm}

\begin{example}\label{M0n}
The embedding of the moduli space of tropical rational curves with $5$ marked points, $\mathcal{M}^{trop}_{0,5}$
into $\R^5$  (see \cite{mikMod}, \cite{tropGrass}, \cite{GathKerbMark} ) is the first example of a realisable fan not obtained by a sequence of modifications along regular functions. The rays of a fine polyhedral subdivision of $\mathcal{M}^{trop}_{0,5}$ may be labelled by distinct pairs $\{i, j\} \subset \{1, \dots, 5\}$ as in Figure  \ref{M05}. See \cite{mikMod}, \cite{ArdKli} for more details.  It was shown in \cite{ArdKli} that $\mathcal{M}^{trop}_{0,n}$ corresponds to the Bergman fan of the complete graphical matroid $K_{n-1}$. Tropical contractions of $\mathcal{M}^{trop}_{0,n}$ correspond to the deletion of an edge of $K_{n-1}$. 
  Therefore, the very first elementary tropical  contraction of $\mathcal{M}^{trop}_{0,5}$ is unique by the symmetry of $K_4$.
The link of singularity of the fans obtained by a series of elementary contractions starting from $\mathcal{M}^{trop}_{0,5}$ and finishing at $\R^2$ are drawn in Figure \ref{M05}. The divisors of each modification marked in white.  It will be shown in Section \ref{sec:surfaces}  that the corresponding divisor of this contraction cannot be the divisor of a regular function on $\R^4$ restricted to $V$ by showing that its tropical self intersection is not effective, which would contradict Corollary \ref{regEff}. 
\end{example}

For open matroidal  modifications we have the following proposition regarding the pullback and pushforward cycle maps  given in Definition \ref{pushModCon}.

\begin{proposition}\label{prop:homo}
Given an open matroidal  modification $\delta:\tilde{V} \longrightarrow V$ the maps $\delta^{\ast}:Z_{k}(V) \longrightarrow Z_{k}(\tilde{V})$ and $\delta_{\ast}:Z_{k}(\tilde{V}) \longrightarrow Z_{k}(V)$ are group homomorphisms for all $k$, and $\delta_{\ast}\delta^{\ast} = id$. 
\end{proposition}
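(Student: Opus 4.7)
The plan is to reduce the statement to the case of an elementary open matroidal modification $\delta:\tilde{V}\longrightarrow V$ along a rational function $f$ with effective divisor, since a general open matroidal modification is a composition of elementary ones, the composition of group homomorphisms is again a homomorphism, and
\[
(\delta_1\circ\delta_2)_{*}(\delta_1\circ\delta_2)^{*}
  \;=\;\delta_{1*}\bigl(\delta_{2*}\delta_{2}^{*}\bigr)\delta_{1}^{*}
  \;=\;\delta_{1*}\delta_{1}^{*}\;=\;\mathrm{id}
\]
as soon as the elementary case is known. So throughout the argument I may assume $\delta$ is elementary.

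That $\delta_{*}$ is a homomorphism is immediate from Definition \ref{pushModCon}(1): the weight formula
$w_{\delta_{*}A}(F)=\sum_{\delta(F_i)=F}w_A(F_i)[\bar\Lambda_{F_i}:\Lambda_F]$
is linear in the weight function of $A$, and the weight function of $A+B$ on each facet is by definition $w_A+w_B$. For the pullback I would observe that the modification cycle $\delta^{*}A=\Gamma_f(A)\cup\mathcal{U}(\Gamma_f(A))$ splits into two pieces whose weight functions are each additive in $A$: the graph $\Gamma_f(A)$ carries the weights of $A$ itself (linear by construction), and the undergraph $\mathcal{U}(\Gamma_f(A))$ carries, on each facet adjacent to a codimension one face $E$ of $\Gamma_f(A)$, the weight of the corresponding facet of $\mathrm{div}_A(f)$. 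By Proposition \ref{propDiv} (or the $\R^n$ version, Proposition \ref{StabDiv}(2)) this latter assignment is additive: $\mathrm{div}_{A+B}(f)=\mathrm{div}_A(f)+\mathrm{div}_B(f)$. Hence $\delta^{*}(A+B)=\delta^{*}A+\delta^{*}B$.

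Finally, to see that $\delta_{*}\delta^{*}A=A$, I would analyse the two kinds of facets in $\delta^{*}A$ separately. The top dimensional facets of $\Gamma_f(A)$ are in bijection with the facets of $A$ via $\delta$, and because $f$ is piecewise integer affine the restriction of $\delta$ to any such facet is a linear isomorphism of their integer affine spans, so the lattice index $[\bar\Lambda_{F_i}:\Lambda_{\delta(F_i)}]$ equals $1$ and the weight is transported unchanged. The facets of $\mathcal{U}(\Gamma_f(A))$ instead project under $\delta$ to codimension one faces of $A$, so they contribute nothing to the top dimensional weights of $\delta_{*}\delta^{*}A$. Summing over preimages of each facet $F\subset A$ therefore recovers precisely $w_A(F)$.

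The main technical point, and where I expect the argument to demand the most care, is the lattice index computation in the last paragraph: one must verify that integrality of the slopes of $f$ guarantees $[\bar\Lambda_{F_i}:\Lambda_{\delta(F_i)}]=1$ on graph facets, and simultaneously that the facets of the undergraph cover only codimension one faces of $A$ and never coincide (as sets) with graph facets. Both follow from the explicit construction in Definition \ref{PmodDef}, but they are the substantive content behind what is otherwise a formal manipulation of definitions.
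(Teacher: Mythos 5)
Your treatment of the elementary case is essentially correct and matches what the paper establishes in the remarks preceding the proposition: additivity of the pushforward weights, additivity of $\delta^{*}$ via $\mbox{div}_{A+B}(f)=\mbox{div}_A(f)+\mbox{div}_B(f)$, and $\delta_{*}\delta^{*}=\mathrm{id}$ by separating graph facets (lattice index $1$) from undergraph facets (which drop dimension under $\delta$). The paper takes this part as already done.

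However, your reduction to the elementary case has a genuine gap, and it is precisely where the paper's proof spends all of its effort. For a non-elementary open matroidal modification, $\delta_{*}$ and in particular $\delta^{*}$ are only \emph{defined} by choosing a factorization into elementary modifications and composing; there is no intrinsic definition of $\delta^{*}A$ for a composite $\delta$ (the pullback at each step is a modification along a function living on an intermediate fan). The factorization is not unique: the same $\delta:\tilde{V}\to V$ can be written as $\tilde{\delta}_1\circ\delta_2$ or $\tilde{\delta}_2\circ\delta_1$ with \emph{different} intermediate fans $V_1\neq V_2$ and different pairs of defining functions -- this is exactly Example \ref{ex:twomodifications}, where one factorization uses two regular functions and the other uses a regular function followed by a genuinely rational one. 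So before you may say ``composition of homomorphisms is a homomorphism'' and telescope $\delta_{*}\delta^{*}$, you must show that the composed pushforward and the composed pullback are independent of the chosen ordering of the elementary steps. The paper does this by a lattice-index computation for the pushforward, and, for the pullback, by a careful analysis of the unbalanced codimension-one faces of the common graph $\Gamma(A)\subset\R^{n+2}$, using the balancing condition to force the added facets (and their weights) in the two candidate pullbacks $\tilde{A}$ and $\tilde{A}^{\prime}$ to coincide, including the subtle case where a face of the undergraph of one modification meets the divisor of the other. None of this appears in your proposal, so as written it does not prove the proposition for non-elementary modifications.
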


\begin{figure}
\includegraphics[scale= 0.35]{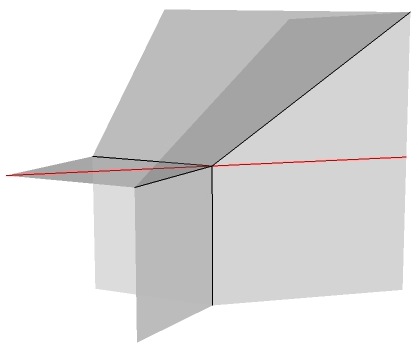}
\hspace{1.5cm}
\includegraphics[scale=0.35]{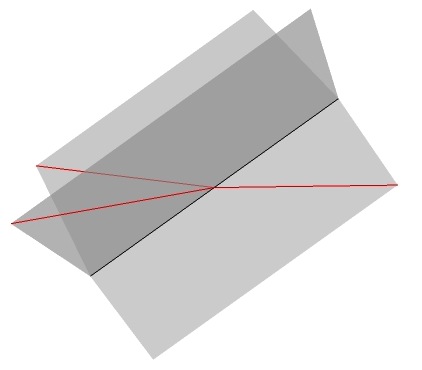}
\put(-200 ,10){$V_1$}
\put(-200, 70){\tiny{$\text{div}_{f_1}(V_1)$}}
\put(-20, 50){\tiny{$\text{div}_{g_2}(V_2) $}}
\put(-50 ,10){$V_2$}
\caption{The two cycles $V_1$, $V_2 \subset \R^3$ from Example \ref{ex:twomodifications}, the divisors $\text{div}_{f_1}(V_1) \subset V_1$, $\text{div}_{g_2}(V_2) \subset V_2$ are drawn in red in each case.    \label{fig:twomodifications}}
\end{figure}

\begin{proof}
It was already mentioned that the pushforward and pullback maps are group homomorphisms when the modification is elementary. Therefore, we must only show that the maps $\delta_{\ast}, \delta^{\ast}$ are well defined when we compose  open elementary modifications. 
Suppose $\delta: \tilde{V} \longrightarrow V$ is the composition of two open matroidal modifications. Set $\delta_2: \tilde{V} 
\longrightarrow V_2$ and      $ \tilde{\delta}_1: V_2 \longrightarrow V$, so that $\tilde{\delta}_1\delta_2 =\delta$, and denote the other sequence of modifications by $\delta_1: \tilde{V} \longrightarrow V_1$ and $\tilde{\delta}_2: V_1 \longrightarrow V$, so that $\tilde{\delta}_2\delta_1 = \delta$, (see Example \ref{ex:twomodifications} for a case when the fans $V_1$ and $V_2$ differ). Without loss of generality we may suppose the kernels of $\delta_1, \delta_2: \R^{n+2} \longrightarrow \R^{n+1}$ are generated by $e_{n+1}$ and $e_{n+2}$, respectively. Then the maps $\tilde{\delta}_1$ and $\tilde{\delta}_2$ are linear projections $\R^{n+1} \longrightarrow \R^n$, with kernels $e_{n+1}$ and $e_{n+2}$ respectively. 
 
For the pushforwards, the sets satisfy, $\tilde{\delta}_i\delta_j(A) = \tilde{\delta}_j\delta_i(A)$, since the $\delta_i$'s and $\tilde{\delta}_i$'s  are orthogonal projections. Let $\mathcal{C}$ denote the closure of the collection of facets of $C$ contracted by both $\delta_1, \delta_2$. If a facet $F$ of $A$ is outside of $\mathcal{C}$ then its contribution to the weight of $\delta(F) \subset \delta_{\ast}A$ is the same if we permute the order of contractions. So assume $F \subset \mathcal{C}$, then the lattice index may be  rewritten as, $[\delta_{i\ast}\Lambda_F : \Lambda_{\delta_i(F)}] = [\Z^n: \Lambda_F + \Lambda_i^{\perp}]$
and  $F$ contributes a weight of, 
$$w_A(F)[\Z^n: \Lambda_F + \Lambda_i^{\perp}][\Z^n: \Lambda_{\delta_1(F)}+ \Lambda^{\perp}_j] = w_A(F)[\Z^n:\Lambda_F + \Lambda_i^{\perp} \cap \Lambda_j^{\perp} ],$$ to $\delta(F)$. Which is independent of the order of contractions. 
 
For the pullbacks,  take a cycle $A$ in $V$ and let $\Gamma(A) \subset \R^{n+2}$ denote the graph of  $A$ along either pair of functions yielding the modification. Although the pairs of functions may differ, (see Example \ref{ex:twomodifications}), the resulting graphs must be the same. Let $\tilde{A}$ denote the pullback of $A$ along the composition $\tilde{\delta}_2\delta_1$ and $\tilde{A}^{\prime}$ denote the pullback of $A$ along the composition $\tilde{\delta}_1\delta_2$.  
Since $\tilde{A}$ and $\tilde{A}^{\prime}$ are modifications of cycles in $V$  the restriction of the linear projections $\delta_1$ and $\delta_2$ to $\tilde{A}$ and $\tilde{A}^{\prime}$  are either one to one or send a half line to a point. 
 
If  $E_A$ is an unbalanced codimension one face of $\Gamma(A)$, then it is unbalanced only in the $e_{n+1}$ and $e_{n+2}$ directions. 
First, if $\tilde{V}$ contains one or both of the faces:
$$\{ x -te_{n+1} \ | \ x \in E_A \text{ and } t \in \R_{\geq 0 } \}, \ \{ x -te_{n+2} \ | \ x \in E_A \text{ and } t \in \R_{\geq 0 } \}, $$ then these are the only facets of $\tilde{A}$ adjacent to $E_A$ and not contained in $\Gamma(A)$, and similarly for $\tilde{A}^{\prime}$. 
The balancing condition at $E_A$ guarantees that the weights are the same, (remark that if $\Gamma(A)$ is  already balanced in one of these directions then we do not need to add the corresponding facet). 

For an unbalanced codimension one face  $E_A$ of $\Gamma(A)$  suppose the above faces do not exist. Then there is a single facet $F_{\tilde{A}}$ of $\tilde{A}$ adjacent to $E_A$ and not in $\Gamma(A)$. Otherwise the projections $\delta_1$ and $\delta_2$ restricted to $\tilde{A}$ would have a finite fiber of size at least two.   The same holds for  $\tilde{A}^{\prime}$, whose single face satisfying these conditions we call $F_{\tilde{A}^{\prime}}$. Now $\tilde{A} - \tilde{A}^{\prime}$ must be balanced at $E_A$ and so the faces $F_{\tilde{A}}$ and $F_{\tilde{A}^{\prime}}$ are the same and equipped with the same weights. 

In this case  there may be codimension one faces of $F_{\tilde{A}}$ at which there are other facets of $\tilde{A}$ adjacent. This occurs when the divisor $D_1\subset V_1$ of  the modification $\delta_1$  is contained in the undergraph of the modification $\tilde{\delta}_2$ and  $\tilde{\delta}^*_2A$ and in addition intersects $D_1 \subset V_1$ in some codimension one face. Call the resulting codimension one face $G_A$ of $F_{\tilde{A}} \subset \tilde{A}$. Then $G_A$ is contained  in the skeleton of $\tilde{V}$ and it is also a face of $F_{\tilde{A}}^{\prime} \subset \tilde{A}^{\prime}$.
If the cycles are unbalanced at  $G_A$ the other facets adjacent to it in $\tilde{A}$ and $\tilde{A}^{\prime}$ must be: 
$$\{ x -te_{n+1} \ | \ x \in G_A \text{ and } t \in \R_{\geq 0 } \}, \ \{ x -te_{n+2} \ | \ x \in G_A \text{ and } t \in \R_{\geq 0 } \},$$ otherwise the projections $\delta_1$, $\delta_2$ would have a finite fiber of size greater than one.   Again, by the balancing condition the weights of these faces in $\tilde{A}$ and $\tilde{A}^{\prime}$ agree. 
\end{proof}

The following example shows a  composition of open matroidal modifications for which the intermediary fans $V_1, V_2$ appearing in the proof above are not the same. 

\begin{example}\label{ex:twomodifications}
Consider the fan $V \subset \R^4$ obtained from $\R^2$  via two elementary open modifications,  $\delta_1,  \delta_2$. The first modification is along the function $f_2(x, y) = \max \{x, y, 0\}$ and yields the  cycle $V_1 \subset \R^3$ shown on the left of Figure \ref{fig:twomodifications}. The next modification is taken along the function  $f_1:\R^3 \longrightarrow \R$ given by $$f_1(x, y, z) = \max \{x, y\} + \max \{z, 0\} - \max \{x, y , z, 0\}. $$

It may be verified that the following different sequence of modifications yields the same fan, $V \subset \R^4$,  after a change of coordinates.  If one first modifies $\R^2$ along the function $g_1(x, y) = \max \{x, y\}$, to obtain a cycle $C_2 \subset \R^3$, see the right hand side of Figure \ref{fig:twomodifications}.  Next,  modify $V_2$ along the function $g_2 : \R^3 \longrightarrow \R$, given by $g_2(x, y, z) = \max \{ z, 0\}$.  Notice on the one hand $V$ is produced by a composition of two elementary regular modifications, and on the other by an elementary regular modification composed with an elementary modification along a rational function. 

\comment{
All of the above fans are tropicalisations of planes in  $(\C^*)^N$. To begin with $\R^2$ is simply $(\C^*)^2$, which is the complement of $3$ lines in $\CP^2$. The fan $V_1 \subset \R^3$ is the tropicalisation of a plane in $(\C^*)^3$ which defines a line arrangement on the left of Figure \ref{fig:twomodlines}, and the fan  $V_2 \subset \R^3$ is the tropicalisation of a plane in $(\C^*)^3$ with underlying line arrangement on the right of Figure \ref{fig:twomodlines}. }
\end{example}

\end{subsection}

\end{section}

\begin{section}{Intersections in matroidal fans}\label{Intersections}
In this section we intersect tropical subcycles of an open matroidal fan $V \subset \R^n$, so throughout we restrict our attention  to open matroidal tropical modifications.  Set $\dim(V) = k, \dim(A) = m_1, \dim(B) = m_2$, and the expected dimension of intersection of $A$ and $B$ to be $m = m_1 + m_2 - k$. Also for any complex $C$ whose support is contained in $V$, let $C^{(s)}$ denote the $s$-dimensional skeleton of $C$ with respect to the refinement induced by the inclusion to $V$.

\begin{definition}\label{defTrans}
Let  $V \subset \R^n$ be a matroidal fan and $A, B \subset V$ be subcycles. 

\begin{enumerate}
\item{$A \cap B$ is proper in $V$ if $A \cap B$ is of pure dimension $m$ or is empty.}

\item{$A \cap B$ is weakly transverse in $V$ if every facet of $(A \cap B)^{(m)}$ is in the interior of a facet of $V$.}

\item{$A \cap B$ is transverse in $V$ if it is proper, weakly transverse and every facet of $A \cap B$ comes from facets of $A$ and $B$ intersecting transversely. }

\end{enumerate}

\end{definition}

\begin{example}\label{negCurve}
The standard hyperplane $P \subset \R^3$ was shown in Figure \ref{Plane}, it is obtained by modifying $\R^2$ along the standard tropical line.  Let $A$ be the sub-cycle parameterized by $(t, t,  0)$ and $B$ be the union of the positive span of the  rays $(0, 1, 1), \ (1-d, -d, 0), \ (d-1, d -1, -1)$, see Figure \ref{intEx}.

\begin{figure}
i)\includegraphics[scale=0.35]{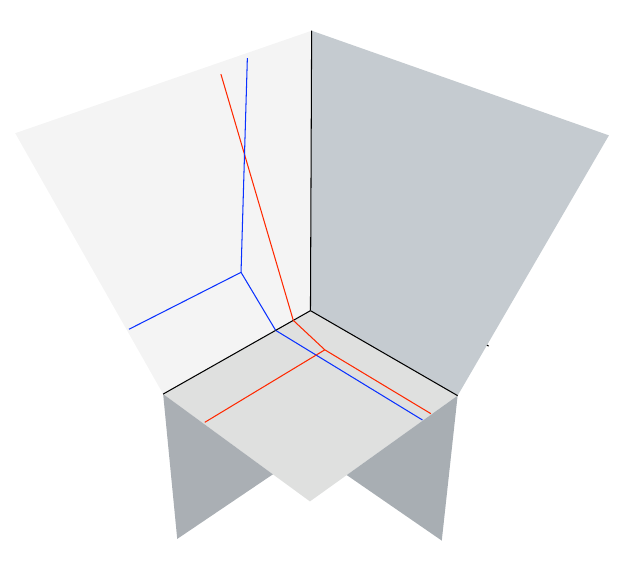}
ii)\includegraphics[scale=0.38]{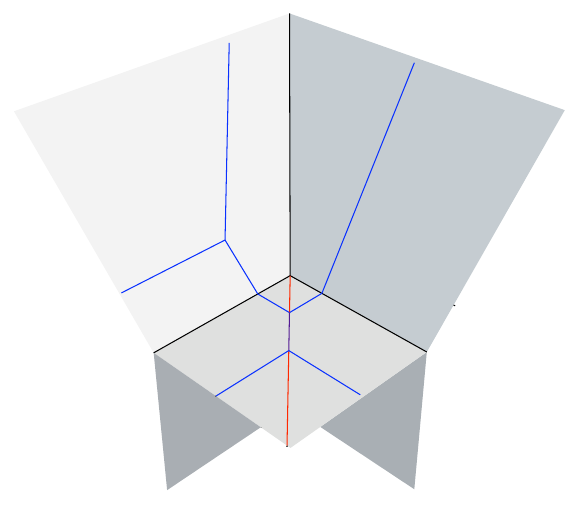}
iii)\includegraphics[scale=0.31]{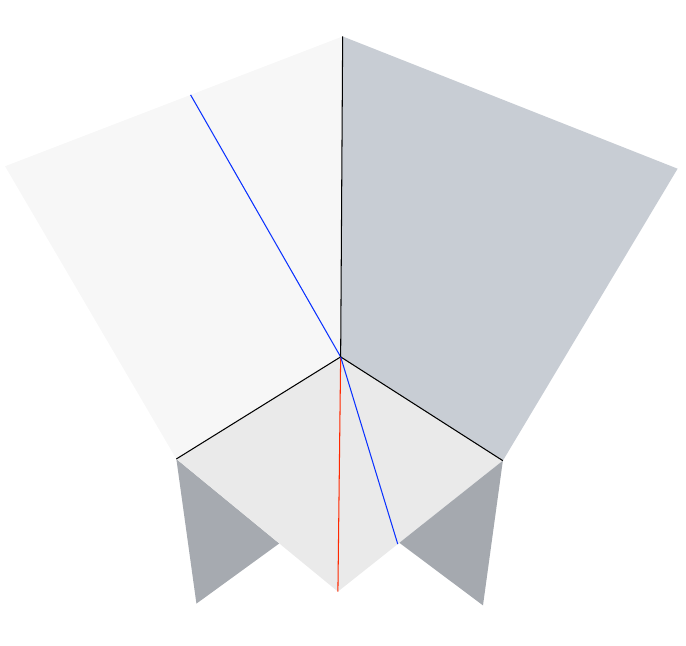}
\caption{Cycles in the standard hyperplane in $\R^3$. i) Transverse intersection ii) Weakly transfer intersection iii) Neither}
\label{figureTrans}
\end{figure}

\begin{figure}[ht]
\includegraphics[scale=1.1]{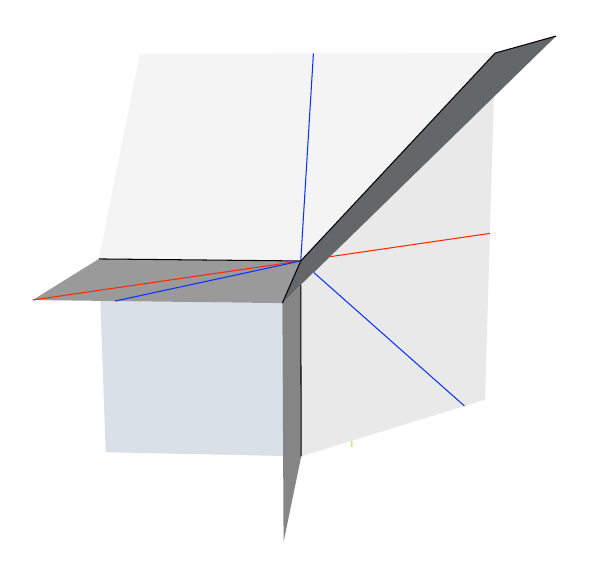}
\put(-55, 175){$(1,1,0)$}
\put(-80, 160){$A$}
\put(-110, 70){$(d-1,d-1,-1)$}
\put(-280, 130){$(1-d, -d, 0)$}
\put(-175, 275){$(0,1,1)$}
\put(-170, 240){$B$}
\put(-150, 30){$V \subset \R^3$}

\includegraphics[scale=0.5]{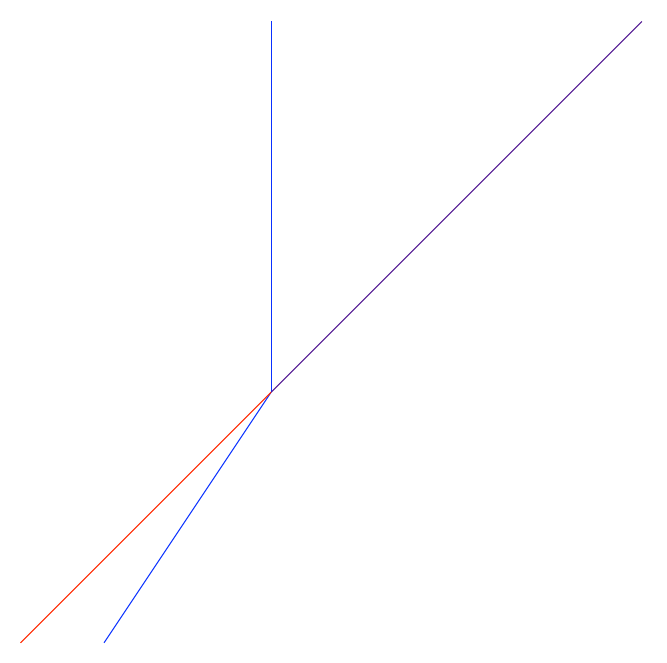}
\put(-120, 130){$\delta_{\ast}B$}
\put(-160, 30){$\delta_{\ast}A$}
\put(-10, 10){$\R^2$}
\caption{Tropical cycles in the standard hyperplane in $\R^3$ along with the image under the contraction to $\R^2$. \label{intEx}}

\end{figure}

The curves $A$ and $B$ intersect only at the vertex $p$ of the fan. This intersection is proper but not weakly transverse. Moreover both cycles are rigid in $P$, meaning they cannot be moved in $P$ by a translation. Consider the  contraction $\delta:P \longrightarrow \R^2$, given by projecting in the $e_3$ direction. 
 Set $\Delta_A =   \delta^{\ast}\delta_{\ast}A - A$ and $\Delta_B =  \delta^{\ast}\delta_{\ast}B - B$. An intersection product should of course be distributive, so we ought to have, 
\begin{align*}
A.B = & \ (  \delta^{\ast}\delta_{\ast}A - \Delta_A ).(\delta^{\ast}\delta_{\ast}B - \Delta_B) \\
= & \  \delta^{\ast}\delta_{\ast}A.\delta^{\ast}\delta_{\ast}B  - \delta^{\ast}\delta_{\ast}A. \Delta_B-   \Delta_A.\delta^{\ast}\delta_{\ast}B + \Delta_A.\Delta_B
\end{align*}
Now, the cycles $ \delta^{\ast}\delta_{\ast}A, \delta^{\ast}\delta_{\ast}B$ are free to move in $P$ in the same way that $\delta_{\ast}A, \delta_{\ast}B$ are free to move in $\R^2$. By translating $\delta^{\ast}\delta_{\ast}A, \delta^{\ast}\delta_{\ast}B$ until they intersect transversally and then translating back we can associate the weight, $$w_{\delta^{\ast}\delta_{\ast}A.\delta^{\ast}\delta_{\ast}B}(p) = 1 = w_{\delta_{\ast}A.\delta_{\ast}B}(\delta(p)).$$
The cycles $\Delta_A, \Delta_B$ are contained in the undergraph of the modification,  see Figure \ref{DtimesR},  and are free to move in this direction.  Also the cycle $\delta^{\ast}\delta_{\ast}A$ restricted to the undergraph is just $\mbox{div}_A(f) \times \R$, and similarly for $\delta^{\ast}\delta_{\ast}B$ .

Now the cycles $\Delta_A, \Delta_B$ may be moved by a translation into a single facet of $P$, see Figure \ref{DtimesR}. 
We can calculate $$w_{\delta^{\ast}\delta_{\ast}A.\Delta_B }(p)= w_{\Delta_A.\delta^{\ast}\delta_{\ast}B}(p)= 0,$$ and 
$$w_{\Delta_A.\Delta_B}(p) = 1-d.$$
Combining all of these we obtain:
\begin{align*}
w_{A.B}(p) = &w_{\delta^{\ast}\delta_{\ast}A.\delta^{\ast}\delta_{\ast}B}(p) -w_{\delta^{\ast}\delta_{\ast}A.\Delta_B }(p)\\ 
& - w_{\Delta_A.\delta^{\ast}\delta_{\ast}B}(p) + w_{\Delta_A.\Delta_B}(p) = -d+2.
\end{align*}

\begin{figure}[ht]
\includegraphics[scale=0.9]{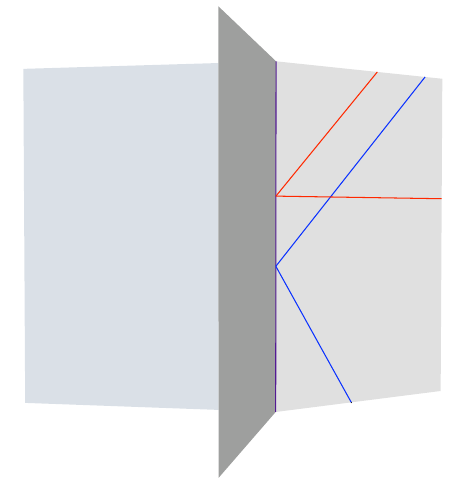}
\put(-55, 115){$\Delta_A - te_3$}
\put(-60, 65){$\Delta_B - t^{\prime}e_3$}
\put(-150, 10){$D \times \R$}

\includegraphics[scale=1]{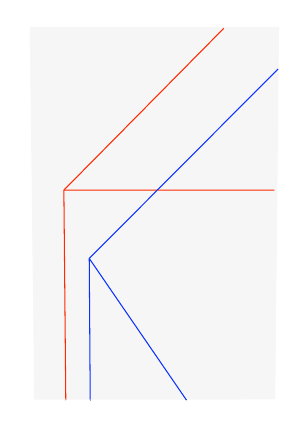}
\put(-140, 170){$\Delta_A - (t_1, t_1, t_2)$}
\put(-90, 75){$\Delta_B - (t_1^{\prime},t_1^{\prime},t_2^{\prime})$}
\put(-70, 105){\small{$1-d$}}
\caption{The second and then third translations of the cycles $\Delta_A, \Delta_B$ from Example \ref{negCurve}.} 
\label{DtimesR}
\end{figure}

\end{example}

Our aim is to obtain a general procedure to split cycles contained in a matroidal fan $V$ in a way so that they may be intersected. 
To do this  we first need some technical definitions and lemmas. 
\begin{definition}
Given $\delta: V \longrightarrow V^{\prime}$ an elementary open matroidal  modification, let $f$ denote the corresponding tropical rational function and $D$ its divisor.  Let $A \subset V$ a be cycle, then denote:

\begin{enumerate}
\item $\Delta_A = \delta^{\ast} \delta_{\ast}A - A$.
  
\item $D_A = \mbox{div}_{\delta_{\ast}A}(f) \times \R \subset D\times \R$. 
\end{enumerate}  

\end{definition}

\begin{lemma}\label{deltadhomo}
Given $\delta: V \longrightarrow V^{\prime}$ an elementary open matroidal  modification, let $f$ denote the corresponding tropical rational function and $D$ its divisor.
Let $A, B$ be subcycles of $V$ then, 
\begin{enumerate}
\item $\Delta_{A+B} = \Delta_A + \Delta_B$
\item $D_{A+B} = D_A + D_B$.
\end{enumerate}

\end{lemma}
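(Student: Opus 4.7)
The plan is that both parts of this lemma follow essentially formally from homomorphism properties already established earlier in the paper, so the proof should be short and mostly a matter of chasing definitions.

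For part (1), I would start by unpacking the definition: $\Delta_{A+B} = \delta^{\ast}\delta_{\ast}(A+B) - (A+B)$. Proposition \ref{prop:homo} states that both $\delta_{\ast}$ and $\delta^{\ast}$ are group homomorphisms on $Z_k$ (for open matroidal modifications), so $\delta^{\ast}\delta_{\ast}(A+B) = \delta^{\ast}(\delta_{\ast}A + \delta_{\ast}B) = \delta^{\ast}\delta_{\ast}A + \delta^{\ast}\delta_{\ast}B$. Subtracting $A+B$ and regrouping gives $\Delta_A + \Delta_B$. This is purely a one-line computation once the homomorphism property is invoked.

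For part (2), I would again unpack: $D_{A+B} = \mbox{div}_{\delta_{\ast}(A+B)}(f) \times \R$. By Proposition \ref{prop:homo}, $\delta_{\ast}(A+B) = \delta_{\ast}A + \delta_{\ast}B$, so this becomes $\mbox{div}_{\delta_{\ast}A + \delta_{\ast}B}(f) \times \R$. Now apply Proposition \ref{propDiv}, which gives $\mbox{div}_{C_1+C_2}(f) = \mbox{div}_{C_1}(f) + \mbox{div}_{C_2}(f)$ for cycles in $\T^n$ (and hence in $\R^n$), to split this as $\mbox{div}_{\delta_{\ast}A}(f) + \mbox{div}_{\delta_{\ast}B}(f)$. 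Finally, the Cartesian product with $\R$ distributes over the sum of cycles, since cycles in $D \times \R$ that are pullbacks from $D$ add componentwise in the first factor.

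I do not expect a main obstacle here: every necessary ingredient has been proved beforehand, namely Propositions \ref{prop:homo} and \ref{propDiv}. The entire proof should fit in a few lines with a sentence citing each of these two propositions. The only minor point worth mentioning is that the product $(-) \times \R$ on cycles is additive in the first factor, but this is immediate from the definition of the weight function on a product cycle.
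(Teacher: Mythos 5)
Your proof is correct and follows the same route as the paper: part (1) is immediate from the homomorphism property of $\delta_{\ast}$ and $\delta^{\ast}$ (Proposition \ref{prop:homo}), and part (2) from the additivity of divisors (Proposition \ref{propDiv}) together with the additivity of $\delta_{\ast}$ and of the product with $\R$. The paper's own proof is exactly this two-line argument, so nothing is missing.
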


\begin{proof}
The first statement is clear since $\delta^{\ast}, \delta_{\ast}$ are homomorphisms, and the second follows from $\mbox{div}_{A+B}(f)= \mbox{div}_A(f) + \mbox{div}_B(f)$.
\end{proof}

\comment{For an elementary tropical modification $\delta: \tilde{V} \longrightarrow V$ given by a  function $f$ recall in Section \ref{sec:mods} we defined its undergraph $\mathcal{U}(\Gamma_f(V))$. For an open modification we may define the undergraph  in an analogous way. Note that in this case, $\mathcal{U}(\Gamma_f(V)) \subset \R^{n+1}$. }

\begin{lemma}\label{undergraph}
Let $\delta: V \longrightarrow V^{\prime}$ be  an  elementary open matroidal modification along the rational  function $f$ and having divisor $D$.   If a cycle $A \subset V$ is in $\mbox{Ker} \ \delta_{\ast}$, then it is contained in the closure of the undergraph $\mathcal{U}(\Gamma_f(V^{\prime}))$.   In particular, it is also a subcycle of $D \times \R$ where $\R$ is the affine space spanned by the kernel of $\delta$. 
\end{lemma}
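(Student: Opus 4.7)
The plan is to exploit the decomposition $V = \Gamma_f(V') \cup \mathcal{U}(\Gamma_f(V'))$ together with the fact that $\delta$ restricts to a polyhedral isomorphism from $\Gamma_f(V')$ onto $V'$ preserving the integer lattice on each face (because $f$ is integer-sloped face by face). In particular, the generic fiber of $\delta \colon V \to V'$ is a single point over $V' \setminus |D|$ and a vertical ray over $|D|$. Every facet of $A$ is therefore of one of two types: those lying in $\Gamma_f(V') \setminus \mathcal{U}(\Gamma_f(V'))$, which project into $V' \setminus |D|$, and those lying in $\mathcal{U}(\Gamma_f(V'))$. The goal is to rule out the first type using the hypothesis $\delta_{\ast}A = 0$; this will then force $|A| \subseteq \mathcal{U}(\Gamma_f(V'))$.

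The key step is to show: if a facet $F$ of $A$ of dimension $m = \dim A$ were to lie in $\Gamma_f(V') \setminus \mathcal{U}(\Gamma_f(V'))$, then one would get a contradiction. Since $\delta|_{\Gamma_f(V')}$ is a polyhedral isomorphism preserving the integer lattice face by face, $\delta(F)$ is itself of dimension $m$ and the lattice index $[\bar{\Lambda}_F : \Lambda_{\delta(F)}]$ in the pushforward formula of Definition \ref{pushModCon} equals $1$. Moreover, because a generic point of $\delta(F) \subseteq V' \setminus |D|$ has single-point fiber in $V$, no second facet of $A$ can project onto $\delta(F)$: any such would have to share the unique preimage of a generic point on an $m$-dimensional subset, forcing it to coincide with $F$. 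Consequently $w_{\delta_{\ast}A}(\delta(F)) = w_A(F) \neq 0$, contradicting $\delta_{\ast}A = 0$.

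Once all facets of $A$ are placed in $\mathcal{U}(\Gamma_f(V'))$, purity forces $|A| \subseteq \mathcal{U}(\Gamma_f(V'))$. For the final clause, observe that each vertical strip $F_E$ composing $\mathcal{U}(\Gamma_f(V'))$ is contained in $\bar{E} \times \R$, where $\R$ stands for the kernel line of $\delta$, and the codim-$1$ faces $E$ appearing are exactly those in the support of $D$. Hence $\mathcal{U}(\Gamma_f(V'))$ is a subset of $|D \times \R|$, and the polyhedral structure of $A$ refines that of $D \times \R$, yielding the claimed subcycle relation. The main delicacy I expect is the uniqueness-of-preimage argument on $V' \setminus |D|$, but this is built into the construction: $\mathcal{U}$ was attached precisely to supply the extra vertical preimages over $|D|$, and nowhere else.
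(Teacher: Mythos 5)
Your proof is correct and is essentially an expanded version of the paper's own (two-sentence) argument: $\delta$ is injective away from $D$, so no cancellation of facet weights can occur in $\delta_{\ast}A$ there, forcing every facet of $A$ to lie over $|D|$, i.e.\ in the closed undergraph, which sits inside $|D|\times\R$. The one point to watch is that a facet of $A$ could lie in $\Gamma_f(D)$ (the graph over the divisor, where fibers are rays rather than single points, so your uniqueness-of-preimage step does not apply); your dichotomy survives this only because such facets already belong to the \emph{closure} of the undergraph, which is exactly what the lemma asserts.
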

 
 \begin{proof}
Away from the divisor $D \subset V$ the map $\delta$ is one to one thus no cancellation of facets can occur in $\delta_{\ast}A$ outside of $D$.  So $\delta(A)$ must be contained in $D$ which implies the lemma.  
 \end{proof}
 
A quick check shows  that $\Delta_A$ is in the kernel of $\delta_{\ast}$, since $ \delta_{\ast} \delta^{\ast} = \mbox{id}$. 
Therefore, for an elementary open modification of matroidal fans $\delta: V \longrightarrow V^{\prime}$ and any cycle  $A \subset V$ we have $\Delta_A, D_A \subset D \times \R$, where $D \subset V^{\prime}$ is the divisor of the modification. Using this we define an intersection product on $V$ in terms of a product on $V^{\prime}$ and $D \times \R$. 

\begin{definition}\label{stableIntMat}
Given cycles $A, B \subset V \subset \R^n$ and an elementary open matroidal modification $\delta:V \longrightarrow V^{\prime}$ with associated divisor $D$, define, 
$$A.B = \delta^{\ast}(\delta_{\ast}A.\delta_{\ast}B) +C_{A.B}$$ with 
 $$C_{A.B} =  \Delta_{A}.\Delta_{B} - \Delta_{A}.D_B - D_A.\Delta_{B},$$
 where these products are calculated in the matroidal fan $D \times \R \subset \R^n$. 
\end{definition}

The above definition gives the  product of two cycles $A, B$ in $V$ as a sum of products of cycles in fans $V^{\prime}$ and $D \times \R$, one of which is of lower codimension, and the other containing the linear space spanned by the kernel of $\delta$.  Continuing to apply this procedure to $V^{\prime}$ and $D$ we continue to decrease the codimension or increase the dimension of the affine linear space contained in the fan and we can eventually reduce the intersection product in $V$ to a sum of pullbacks of stable intersections in $\R^k$, where $k$ is the dimension of $V$.   A priori this definition depends on the choice of all contraction charts. Before showing the above definition is independent of the chosen charts in Proposition \ref{welldef} we state some properties of the intersection product as defined relative to a  fixed  collection of  open matroidal contractions. 

\begin{lemma}\label{pushpullLemma}
Suppose $\delta:V \longrightarrow V^{\prime}$ is an elementary open modification of matroidal fans and $A, B$ are cycles in $V^{\prime}$. The intersection product in $V$ from Definition \ref{stableIntMat} calculated via the modification $\delta$ satisfies
$$\delta^{\ast}A.\delta^{\ast}B = \delta^{\ast}(A.B).$$
\end{lemma}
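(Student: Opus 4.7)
The plan is to apply Definition \ref{stableIntMat} directly to the cycles $\delta^{\ast}A$ and $\delta^{\ast}B$, using the given elementary modification $\delta:V \longrightarrow V^{\prime}$ as the first chart in the recursive computation of the product in $V$. The key observation is that the ``correction term'' $C_{\delta^{\ast}A.\delta^{\ast}B}$ vanishes identically because pullbacks along $\delta$ are fixed by $\delta^{\ast}\delta_{\ast}$ on their own contracted preimages.

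More concretely, the first step is to invoke Proposition \ref{prop:homo}, which gives $\delta_{\ast}\delta^{\ast} = \mathrm{id}$. Applied to $A$ and $B$ this yields $\delta_{\ast}(\delta^{\ast}A) = A$ and $\delta_{\ast}(\delta^{\ast}B) = B$. Consequently
\[
\Delta_{\delta^{\ast}A} \;=\; \delta^{\ast}\delta_{\ast}(\delta^{\ast}A) - \delta^{\ast}A \;=\; \delta^{\ast}A - \delta^{\ast}A \;=\; 0,
\]
and likewise $\Delta_{\delta^{\ast}B}=0$. Since every summand of
\[
C_{\delta^{\ast}A.\delta^{\ast}B} \;=\; \Delta_{\delta^{\ast}A}.\Delta_{\delta^{\ast}B} - \Delta_{\delta^{\ast}A}.D_{\delta^{\ast}B} - D_{\delta^{\ast}A}.\Delta_{\delta^{\ast}B}
\]
contains at least one $\Delta$-factor, and intersection with the zero cycle in the matroidal fan $D\times \R$ is zero (by distributivity, which at this stage of the recursion reduces to distributivity of stable intersection in an ambient $\R^k$ together with Lemma \ref{deltadhomo}), the whole correction term is zero.

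The second step is then to substitute back into Definition \ref{stableIntMat}:
\[
\delta^{\ast}A.\delta^{\ast}B \;=\; \delta^{\ast}\!\bigl(\delta_{\ast}(\delta^{\ast}A).\delta_{\ast}(\delta^{\ast}B)\bigr) + C_{\delta^{\ast}A.\delta^{\ast}B} \;=\; \delta^{\ast}(A.B) + 0,
\]
which is the desired identity. No step presents a genuine obstacle; the only subtle point to articulate carefully is that the equality is one of intersection products computed \emph{relative to the fixed choice of open matroidal contraction charts}, so that the product $A.B$ appearing on the right is the one computed by the induced chain of contractions starting from $V^{\prime}$, and this is precisely what the definition of $\delta^{\ast}A.\delta^{\ast}B$ feeds into at its first recursive step.
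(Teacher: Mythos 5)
Your proposal is correct and follows exactly the paper's own argument: since $\delta_{\ast}\delta^{\ast}=\mathrm{id}$, the terms $\Delta_{\delta^{\ast}A}$ and $\Delta_{\delta^{\ast}B}$ vanish, so the correction term $C_{\delta^{\ast}A.\delta^{\ast}B}$ is zero and the product reduces to $\delta^{\ast}(\delta_{\ast}\delta^{\ast}A.\delta_{\ast}\delta^{\ast}B)=\delta^{\ast}(A.B)$. The paper states this in a single sentence; your write-up is just a more explicit version of the same computation.
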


\begin{proof}
In this case $\Delta_A, \Delta_B = 0$ so the term $C_{A, B}$ from Definition \ref{stableIntMat} is also $0$.  
\end{proof}

\begin{corollary}
Suppose the matriodal fan $V \subset \R^n$ is a $k$-dimensional subspace of $\R^n$, and let  $\delta:V \longrightarrow \R^k$ be an open matroidal contraction. For subcycles $A, B$ in  $V$ we have, 
$$A.B = \delta^*(\delta_{\ast}A. \delta_{\ast}B).$$
\end{corollary}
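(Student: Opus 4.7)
The plan is to induct on the number of elementary open modifications in the contraction $\delta: V \to \R^k$, using the observation that when $V$ is itself a $k$-dimensional linear subspace, every intermediate fan in the factorization is also a $k$-dimensional linear subspace, which forces all the intermediate divisors to be empty.

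First I would handle the elementary case. Suppose $\delta: V \to \R^k$ is a single elementary open matroidal contraction with divisor $D \subset \R^k$. Since $\dim V = k$ and $\delta$ is a linear projection, the restriction $\delta|_V : V \to \R^k$ is a linear isomorphism of $k$-dimensional spaces, so $\delta$ has no contracted facets. By Lemma \ref{undergraph}, any cycle in $\ker \delta_*$ lies in $D \times \R$, but here $\ker \delta_* = 0$, forcing $D = \emptyset$. Consequently, for every subcycle $A \subset V$ we have $\delta^* \delta_* A = A$, so $\Delta_A = 0$, and likewise $D_A = 0$. Applying Definition \ref{stableIntMat},
\begin{equation*}
A.B \;=\; \delta^*(\delta_* A \cdot \delta_* B) + C_{A,B} \;=\; \delta^*(\delta_* A \cdot \delta_* B),
\end{equation*}
since every term of $C_{A,B}$ contains a factor $\Delta_A$, $\Delta_B$, $D_A$ or $D_B$.

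For the inductive step, write $\delta = \delta' \circ \delta_1$ where $\delta_1: V \to V_1$ is elementary and $\delta': V_1 \to \R^k$ is a contraction with one fewer step. The image $V_1 = \delta_1(V)$ is a matroidal fan that is the linear image of a $k$-dimensional subspace; since $V_1$ itself must admit a contraction to $\R^k$, its dimension is exactly $k$, so $V_1$ is again a $k$-dimensional linear subspace. By the elementary case applied to $\delta_1$ we obtain $A.B = \delta_1^*(\delta_{1*}A \cdot \delta_{1*}B)$. Then, by the induction hypothesis applied to the contraction $\delta': V_1 \to \R^k$ and the cycles $\delta_{1*}A, \delta_{1*}B \subset V_1$,
\begin{equation*}
\delta_{1*}A \cdot \delta_{1*}B \;=\; (\delta')^*\bigl(\delta'_*\delta_{1*}A \cdot \delta'_*\delta_{1*}B\bigr) \;=\; (\delta')^*(\delta_* A \cdot \delta_* B).
\end{equation*}
Pulling back through $\delta_1$ and using functoriality of the pullback (which follows from Proposition \ref{prop:homo} applied to the composition of elementary modifications) gives $A.B = \delta^*(\delta_* A \cdot \delta_* B)$, as required.

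The only nontrivial point is the structural claim that every elementary contraction in the factorization of $\delta$ has empty divisor; once this is established the correction term in Definition \ref{stableIntMat} disappears at every stage and the statement is immediate from Lemma \ref{pushpullLemma}. The potential subtlety is that an open matroidal contraction could a priori be factored through non-linear intermediate fans, but the dimension argument above rules this out: every intermediate fan, sandwiched between two $k$-dimensional linear subspaces under linear surjections, is forced to be a $k$-dimensional linear subspace as well.
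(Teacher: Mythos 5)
Your proof is correct and takes essentially the route the paper intends: the corollary is stated without proof right after Lemma \ref{pushpullLemma}, and the intended argument is precisely yours --- every elementary contraction in a factorization of $\delta$ joins two $k$-dimensional linear subspaces, so its divisor is empty, all correction terms in Definition \ref{stableIntMat} vanish, and the product is computed by pushing down to $\R^k$ and pulling back. One small repair: deducing $D = \emptyset$ from Lemma \ref{undergraph} is backwards, since a trivial kernel only makes that lemma vacuous; instead observe directly that $\delta|_V$ being a linear isomorphism means $V$ is the graph of an integral affine function on $\R^k$, whose undergraph (hence divisor) is empty, which also gives $\delta^*\delta_*A = A$ for every subcycle $A$.
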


\begin{proposition}\label{propMatInt}
Let $V \subset \R^n$ be a matroidal fan and $A, B, C$ be subcycles of $V$. 
Then the intersection product  given in Definition \ref{stableIntMat} relative to any choice of contraction charts satisfies the following:
\begin{enumerate}
\item{$A.B$ is  a balanced cycle contained in $V$}
\item{$A.C = C.A$}
\item{$A_1.(A_2 +A_3) = A_1.A_2 +A_1.A_3$} \label{distributivity}
\item{$A_1.(A_2.A_3)= (A_1.A_2).A_3$} \label{associativity}
\item{$\mbox{div}_A(g) = \mbox{div}_V(g).A$}
\end{enumerate}
\end{proposition}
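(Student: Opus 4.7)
The plan is to prove all five properties simultaneously by induction on the pair $(\mathrm{codim}_{\R^n}(V), \dim V - \ell)$, ordered lexicographically, where $\ell$ is the dimension of the maximal Euclidean subspace contained in $V$. The base case, when both entries vanish, is $V = \R^k$; there Definition \ref{stableIntMat} reduces to stable intersection (Definition \ref{stabRn}), so (1)--(4) are Corollary \ref{asscom} and (5) is Proposition \ref{StabDiv}(1).

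For the inductive step, fix an elementary open matroidal modification $\delta: V \longrightarrow V^{\prime}$ with divisor $D$ and apply the formula
$$A.B = \delta^{\ast}(\delta_{\ast}A . \delta_{\ast}B) + C_{A.B}.$$
Both summands are intersections in the matroidal fans $V^{\prime}$ and $D \times \R$, each strictly smaller in the induction order: $V^{\prime}$ has smaller ambient codimension, while $D \times \R$ has the same codimension but contains an extra Euclidean direction. Balancing (1) holds because $\delta^{\ast}$ preserves balancing, $C_{A.B}$ is balanced in $D \times \R$ by the inductive hypothesis, and the sum of balanced cycles is balanced. Commutativity (2) is immediate by the symmetry of the formula and the inductive hypothesis. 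For distributivity (3) one combines Lemma \ref{deltadhomo} with the linearity of $\delta_{\ast}, \delta^{\ast}$ from Proposition \ref{prop:homo} and inductive distributivity in $V^{\prime}$ and $D \times \R$.

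Associativity (4) is the most delicate point. The key computational inputs are
$$\delta_{\ast}\delta^{\ast} = \mathrm{id}, \qquad \delta_{\ast}\Delta_{A} = 0, \qquad \delta_{\ast}D_{A} = 0,$$
the last two of which follow from Lemma \ref{undergraph} and a dimension count in $D \times \R$. Expanding $(A_1.A_2).A_3$ and $A_1.(A_2.A_3)$ via Definition \ref{stableIntMat}, the ``principal'' summand $\delta^{\ast}(\delta_{\ast}A_1 . \delta_{\ast}A_2 . \delta_{\ast}A_3)$ agrees on both sides by inductive associativity in $V^{\prime}$, and Lemma \ref{pushpullLemma} combined with the vanishings above guarantees that pulling back a product from $V^{\prime}$ introduces no extra $C$-term. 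The remaining $C$-type summands lie entirely in $D \times \R$; expanding $\delta^{\ast}\delta_{\ast}A_i = A_i + \Delta_{A_i}$ and collecting, they reduce to triple products of the $\Delta_{A_i}$'s and $D_{A_i}$'s, which agree on the two sides by inductive associativity in $D \times \R$.

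For (5), one first observes that $\mbox{div}_V(g)$ itself decomposes as $\delta^{\ast}\mbox{div}_{V^{\prime}}(\bar{g}) + F$ for a suitable rational function $\bar{g}$ on $V^{\prime}$ and a correction cycle $F$ supported in $D \times \R$, by Definition \ref{divTn} and Proposition \ref{propDiv}. Plugging this decomposition, and the analogous one for $\mbox{div}_A(g)$, into the defining formula for $\mbox{div}_V(g).A$ reduces the identity to the inductive instances of (5) in $V^{\prime}$ and in $D \times \R$. The main obstacle is (4): matching the $C$-correction terms on the two sides of associativity requires careful use of $\delta_{\ast}\delta^{\ast} = \mathrm{id}$, the vanishing of $\delta_{\ast}$ on $\Delta$- and $D$-cycles, and Lemma \ref{pushpullLemma}, and is the only step where the bookkeeping is genuinely nontrivial.
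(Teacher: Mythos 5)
Your proposal is correct and follows essentially the same route as the paper: induction reducing the product in $V$ to products in $V^{\prime}$ and $D\times\R$, with (1)--(3) immediate from Lemma \ref{deltadhomo} and the homomorphism properties, (5) from the decomposition of $\mbox{div}_V(g)$ via $\delta^{\ast}\delta_{\ast}$, and (4) by expanding and collecting the correction terms. The paper merely makes explicit the identities $\Delta_{A_i.A_j} = \Delta_{A_i}.D_{A_j} + D_{A_i}.\Delta_{A_j} - \Delta_{A_i}.\Delta_{A_j}$ and $D_{A_i.A_j} = D_{A_i}.D_{A_j}$ (the latter via Lemma \ref{linearity}) that your ``expand $\delta^{\ast}\delta_{\ast}A_i = A_i + \Delta_{A_i}$ and collect'' step produces implicitly.
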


\begin{proof}
The above properties all  follow by  induction. 
The base case being $V = \R^k$, where all of the above properties are satisfied.
Suppose we have chosen, $\delta:V \longrightarrow V^{\prime}$ as the first elementary open matroidal contraction, and let its divisor be $D \subset V^{\prime}$.  We may assume all of the properties stated above hold for intersections in $V^{\prime}$ and $D \times \R$. 

For $(1)$, the weighted balanced complex,  $A.B$ is the sum of $\delta^{\ast}(\delta_{\ast}A.\delta_{\ast}B)$ and $C_{A.B}$ which are both balanced by the induction assumption, so it is balanced.  
Commutativity also follows immediately by induction.  By Lemma \ref{deltadhomo} and  distributivity for products  in $V^{\prime}$ and $D \times \R$, we get distributivity in $V$.

For associativity, first notice that 
\begin{align} 
\Delta_{A_i.A_j} & =  \Delta_{A_i}.D_{A_j} +  D_{A_i}.\Delta_{A_j} - \Delta_{A_i}.\Delta_{A_j} & \\  
 D_{A_i.A_j} & =  D_{A_i}.D_{A_j}. & 
\end{align}
The first line follows from the definition of $\Delta_{A_i.A_j}$.  The statement $(3)$  follows from Lemma \ref{linearity} which follows this proposition.
 Then, 
 $$A_1.(A_2.A_3) = \delta^{\ast}(\delta_{\ast}A_1.(\delta_{\ast}A_2.\delta_{\ast}A_3)) - \Delta_{A_1}.D_{A_2.A_3} - D_{A_1}.\Delta_{A_2.A_3} + \Delta_{A_1}.\Delta_{A_2.A_3}$$
Assuming associativity in $V$ and $D \times \R$ and using commutativity we can remove brackets and write:
 \begin{align*}
 A_1.(A_2.A_3) = & \delta^{\ast}(\delta_{\ast}A_1.\delta_{\ast}A_2.\delta_{\ast}A_3)  +  \\
 &  \sum_{\stackrel{1\leq i<j \leq 3}{k \neq i, j}} \Delta_{A_i}.\Delta_{A_j}.D_{A_k} -   D_{A_i}.D_{A_j}.\Delta_{A_k}  - \Delta_{A_1}.\Delta_{A_2}.\Delta_{A_3}
 \end{align*}
 Regrouping terms and using $(2)$ and  $(3)$ we get, 
 \begin{align*}
 A_1.(A_2.A_3) & =   \delta^{\ast}((\delta_{\ast}A_1.\delta_{\ast}A_2).\delta_{\ast}A_3) - \Delta_{A_1.A_2}.D_{A_3} - D_{A_1.A_2}.\Delta_{A_3} + \Delta_{A_1.A_2}.\Delta_{A_3} \\
 &  =  (A_1.A_2).A_3.
 \end{align*}
 
Lastly,  given a divisor $D = \mbox{div}_V(g)$ we may write it as $\delta^{\ast}\delta_{\ast}D - \Delta_D$. Then $\tilde{g}(x) = g(\delta(x))$, is the function of the divisor $\delta^{\ast}\delta_{\ast}D$ where $f$ is the function of the modification $\delta$. So $\tilde{g} - g$ gives $\Delta_D$ by part $3$ of Proposition \ref{StabDiv}. The result follows by distributivity and by applying the induction hypothesis to both parts. 
\end{proof}

We require a final lemma before proving that the product is independent of the choice of contractions. 
\begin{lemma}\label{linearity}
Let $V \subset \R^n$ be a matroidal fan and  $A, B$ be subcycles of $ V$, set $$\tilde{A} = A \times \R,  \quad \tilde{B} = B \times \R, \quad \text{and} \quad \tilde{V} = V \times \R.$$ Then, we may choose contraction charts  so that  by Definition \ref{stableIntMat} we have $$\tilde{A}.\tilde{B} = A.B \times \R \subset \tilde{V}.$$
\end{lemma}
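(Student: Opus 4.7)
The plan is to induct on the codimension $n-k$ of $V$ in $\R^n$, always choosing contraction charts for $\tilde V = V \times \R$ that extend those for $V$ trivially in the new $\R$-direction.

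For the base case $V = \R^k$ (so $\tilde V = \R^{k+1}$), both products reduce to stable intersection in Euclidean space, and the identity $\tilde A . \tilde B = (A.B) \times \R$ follows from the fan displacement rule of Definition \ref{stabRn}: since $\tilde A$ and $\tilde B$ are translation invariant in the $e_{k+1}$ direction, a generic displacement vector $v \in \R^{k+1}$ may be replaced by its projection to $\R^k$, and the weight on every facet of $\tilde A . \tilde B$ then matches the weight on the corresponding facet of $A.B$ sitting over each $\R$-fiber.

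For the inductive step, choose any elementary open matroidal contraction $\delta : V \to V'$ with divisor $D$ and modification function $f$, and let $\tilde \delta : \tilde V \to V' \times \R$ be the projection with the same kernel generator $e_i$. Then $\tilde V$ is matroidal (its matroid is $M \oplus U_{1,1}$), and $\tilde\delta$ is an elementary open matroidal modification whose function $\tilde f = f \circ \pi$, with $\pi$ forgetting the last coordinate, has divisor $D \times \R$. Since $\tilde f$ is independent of the last coordinate, I would verify the key compatibilities
$\tilde \delta_* \tilde A = \delta_* A \times \R$, $\tilde \delta^*(C \times \R) = \delta^* C \times \R$ for every $C \subset V'$, and hence $\Delta_{\tilde A} = \Delta_A \times \R$ and $D_{\tilde A} = D_A \times \R$ (up to a harmless reordering of two $\R$ factors). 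All of these checks are routine: the graph and undergraph constructions that define both the pullback and the divisor split off the $\R$ factor because $\tilde f$ is constant in that direction.

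I would then plug these identities into Definition \ref{stableIntMat} applied to $\tilde A . \tilde B$ via $\tilde \delta$. The main term $\tilde \delta^*(\tilde \delta_* \tilde A . \tilde \delta_* \tilde B)$ becomes $\delta^*(\delta_* A . \delta_* B) \times \R$ by the inductive hypothesis in $V' \subset \R^{n-1}$, which has strictly smaller codimension. The correction term $C_{\tilde A . \tilde B}$ is a sum of intersections among $\Delta_{\tilde A}, \Delta_{\tilde B}, D_{\tilde A}, D_{\tilde B}$ computed in $(D \times \R) \times \R$; by applying the lemma recursively to $D \times \R$, each summand splits off an $\R$ factor, yielding $C_{\tilde A . \tilde B} = C_{A.B} \times \R$. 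Summing gives $\tilde A . \tilde B = (A.B) \times \R$, as required. The main obstacle I expect is ensuring the recursion strictly decreases, since $D \times \R$ has the same codimension as $V$; the remedy is to induct on the pair (codim, dim) ordered lexicographically, so that the $V'$-branch strictly decreases codim while the $D \times \R$-branch strictly decreases the dimension of the matroidal factor, both paths terminating at the base case above.
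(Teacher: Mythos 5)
Your proposal is correct and follows essentially the same route as the paper: both establish the base case via stable intersection in $\R^k$ and $\R^{k+1}$, observe that adding a coloop to the matroid lifts each elementary open matroidal contraction $\delta: V \to V'$ to $\tilde\delta: \tilde V \to V'\times\R$ with divisor $D\times\R$, choose the corresponding collection of contractions for $\tilde V$, and conclude by induction. Your explicit verification of the compatibilities $\Delta_{\tilde A}=\Delta_A\times\R$, $D_{\tilde A}=D_A\times\R$ and your lexicographic induction on (codimension, dimension of the matroidal factor) simply spell out details the paper leaves implicit.
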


\begin{proof}
The above statement holds for stable intersections in $\R^n$ and $\R^{n+1}$. If $V$ corresponds to a matroid $M$ on $E$ then $\tilde{V}$ corresponds to a matroid $\tilde{M}$ on $E \cup e$ with bases $B \cup e$ for every base $B$ of $M$, in other words we have added a coloop $e$  to the matroid $M$. Given an elementary open  modification of matroidal fans,  $\delta: V \longrightarrow V^{\prime}$ with divisor $D$ we have a corresponding elementary open modification  $\tilde{\delta}: \tilde{V} \longrightarrow \tilde{V}^{\prime}$ with divisor $\tilde{D} = D\times \R$ and $\tilde{V}^{\prime} = V^{\prime} \times \R$.  In order to define the product in $A.B$, a collection of contractions  are fixed. To intersect $\tilde{A}, \tilde{B}$ in $\tilde{V}$, simply choose the corresponding collection of contractions of $\tilde{V}$.  Applying Definition \ref{stableIntMat} we obtain the lemma by induction. 
\end{proof}

\begin{theorem}\label{welldef}
The intersection product from Definition \ref{stableIntMat} is independent of the choice of open matroidal contractions. 
\end{theorem}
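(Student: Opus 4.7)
The plan is to argue by strong induction on the number $m$ of elementary open matroidal modifications needed to obtain $V$ from some $\R^k$. The base case $m=0$, where $V = \R^k$, is immediate since Definition \ref{stableIntMat} collapses to ordinary stable intersection (Definition \ref{stabRn}), which is well-defined.

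For the inductive step, assume the statement for every matroidal fan reachable in fewer than $m$ steps. By this induction hypothesis, once a first elementary open matroidal contraction $\delta : V \to V'$ is chosen, all subsequent intersection products in $V'$ and in the auxiliary fans $D \times \R$ (with $D \subset V'$ the divisor of the modification) are independent of the further contraction choices used to compute them. So it suffices to compare two distinct first contractions $\delta_1 : V \to V_1$ and $\delta_2 : V \to V_2$; by Proposition \ref{matroidMod} these correspond to deleting two distinct elements of the ground set of the underlying matroid, and by Proposition \ref{prop:homo} they are orthogonal linear projections along two distinct standard coordinate directions.

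Next I would produce a common ``grandparent'' $V_0$ obtained from $V_1$ by a further elementary contraction $\tilde\delta_1 : V_1 \to V_0$ and from $V_2$ by $\tilde\delta_2 : V_2 \to V_0$, chosen so that $\tilde\delta_1 \circ \delta_1 = \tilde\delta_2 \circ \delta_2 =: \delta$; the existence of this square is exactly what is established in the proof of Proposition \ref{prop:homo}. By the induction hypothesis the inner product $\delta_{1\ast}A . \delta_{1\ast}B$ in $V_1$ may be computed along $\tilde\delta_1$, and similarly for $\delta_{2\ast}A . \delta_{2\ast}B$ along $\tilde\delta_2$. Unrolling Definition \ref{stableIntMat} along each route yields, for $i=1,2$,
\[
A.B \;=\; \delta^{\ast}\bigl(\delta_{\ast}A . \delta_{\ast}B\bigr) + \delta_i^{\ast}\, C^{(\tilde\imath)}_{\delta_{i\ast}A,\,\delta_{i\ast}B} + C^{(i)}_{A,B},
\]
so the theorem reduces to the identity
\[
\delta_1^{\ast}\, C^{(\tilde 1)}_{\delta_{1\ast}A,\,\delta_{1\ast}B} + C^{(1)}_{A,B} \;=\; \delta_2^{\ast}\, C^{(\tilde 2)}_{\delta_{2\ast}A,\,\delta_{2\ast}B} + C^{(2)}_{A,B}.
\]

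The main obstacle is establishing this last identity. To attack it I would expand every $\Delta$ and $D$ appearing in the correction terms using Lemma \ref{deltadhomo}, localise each summand into the appropriate $D_i \times \R$ via Lemma \ref{undergraph}, and strip off $\R$-factors using Lemma \ref{linearity}. Combined with distributivity, commutativity, associativity, and divisor compatibility from Proposition \ref{propMatInt} (all of which hold in the smaller fans by the induction hypothesis), together with the factorisation of the defining functions of the two modifications through $\delta$ afforded by Proposition \ref{StabDiv} and Proposition \ref{propDiv}, both sides of the desired identity can be rewritten as a single symmetric sum whose terms are indexed by whether each factor contributes a $\Delta$-piece or a $D$-piece in each of the two kernel directions of $\delta_1$ and $\delta_2$. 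The required equality then follows from the manifest symmetry of this expression under swapping the two directions, in exactly the same spirit as the orthogonal-projection bookkeeping already used in the proof of Proposition \ref{prop:homo}.
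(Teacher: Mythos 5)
Your overall strategy (induction on the length of the contraction sequence, reduce to comparing two different first contractions, complete the square to a common target) is the same skeleton as the paper's proof, but there is a genuine gap at the central step: the common ``grandparent'' $V_0$ with $\tilde\delta_1\circ\delta_1=\tilde\delta_2\circ\delta_2$ does not always exist. If $V\subset\R^{k+1}$ is a $k$-dimensional matroidal fan, both elementary contractions $\delta_1,\delta_2$ already land in $\R^k$, and by Corollary \ref{fullContraction} no further contraction is available (any additional deletion would remove a coloop and drop the rank). The underlying matroid has $k+2$ elements and rank $k+1$, so $M\backslash\{1,2\}$ cannot have rank $k+1$; the square degenerates to a triangle in which the two copies of $\R^k$ are identified only by a basis exchange. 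This is exactly the second diagram in the paper's proof, and it requires a separate and rather delicate argument: one constructs auxiliary cycles $\tilde A,\tilde B$ satisfying $\delta_{1\ast}\tilde A=\delta_{2\ast}\tilde A$ and $\delta_i^{\ast}\delta_{i\ast}\tilde A=\tilde A$, writes $A=\tilde A-\Delta_A^1-\Delta_A^2-\Xi_A$ with $\Xi_A$ in the kernel of both pushforwards, and shows the residual products descend to $D_{ij}\times\R^2$, where symmetry in $i,j$ comes from the matroid identity for $M/\{i,j\}$. Your proposal never addresses this terminal case, and your appeal to Proposition \ref{prop:homo} for the existence of the square only covers the situation where a two-step composition is given in advance, not the situation where the two first contractions exhaust the codimension.

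A secondary weakness: even in the square case, the claim that the identity between correction terms ``follows from the manifest symmetry'' undersells what has to be checked. The term $\delta_i^{\ast}(\delta_{i\ast}\tilde A.\delta_{i\ast}\tilde B)$ is only independent of $i$ because of the special properties built into $\tilde A,\tilde B$; the localisation of the correction terms into a common fan requires knowing that restricting $\delta_j$ to $D_i\times\R$ is again an elementary open matroidal modification, which rests on the commutation of deletion and contraction ($M\backslash j/i=M/i\backslash j$, Proposition 3.1.26 of \cite{Ox}); and the cycles $\Xi_A,\Xi_B$ killed by both pushforwards produce extra terms that must be tracked separately. None of this is automatic from Lemmas \ref{deltadhomo}, \ref{undergraph} and \ref{linearity} alone, so as written the argument is a plan rather than a proof even where the square exists.
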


\begin{proof}
Fix a matroidal fan $V \subset \R^n$ and subcycles $A, B$ of $V$. We may assume by induction that the product is well-defined on $D \times \R$ and $V^{\prime}$ where $\delta:V \longrightarrow V^{\prime}$ is any 
elementary open matroidal modification and $D$ is its associated divisor. 

By Corollary \ref{fullContraction} any two open matroidal  contractions $\delta, \delta^{\prime}:V \longrightarrow \R^k$ can be related by a series of basis exchanges. So it suffices to check two things: that we may transpose the order of any two elementary open contractions to $\R^k$ and obtain the same intersection cycle and that if $\delta:V \longrightarrow V^{\prime}$ is the composition of any two elementary open matroidal modifications, we may permute the order of the elementary contractions and obtain the same product.
 In other words we must show that the definition does not depend on the paths taken in the following two diagrams:

$$ \xymatrix{ &  V \ar[ld]_{\delta_1} \ar[rd]^{\delta_2}  &  \\ V_1 \ar[rd]_{\tilde{\delta}_2}   &   & V_2  \ar[ld]^{\tilde{\delta}_1} \\  & V^{\prime}  & }  \ \ \ \ \ \ \ \ \ \ \ \ \ \ \  
 \xymatrix{ &  V \ar[ldd]_{\delta_1} \ar[rdd]^{\delta_2} &    \\ \\ \R^k \ar@{.>}[rr]_{e_2 \mapsto e_1}  &   & \R^k }  $$
We will start by showing the latter, let $\delta_1,\delta_2: V \longrightarrow \R^k$ be two elementary open matroidal contractions. Then $V$ is of codimension one in $\R^{k+1}$ and thus corresponds to a corank one matriod $M$. Suppose without loss of generality that the open contractions $\delta_i$ correspond to the deletion of the element $i$ from the corresponding matroid, Then we may assume that $i= 1, 2$ are not coloops of $M$. 
If we exchange any two non coloop elements $i$ and $j$ of a corank one matroid $M$ we obtain a matroid isomorphism.  Also,  restricting the matroid $M$ to $i$ or $j$ produces isomorphic matroids $M / i \cong M / j$. 
Therefore the divisors $D_i, D_j \subset \R^k$ of the corresponding elementary open matroidal  modifications $\delta_i, \delta_j: V \longrightarrow \R^k$ can be identified as well as the functions $f_i, f_j$ on $\R^k$.

First we will construct cycles $\tilde{A}, \tilde{B} \subset V$ such that $\delta_{1\ast} \tilde{A} = \delta_{2\ast} \tilde{A} \subset \R^k$ and  $\delta_i^{\ast} \delta_{i\ast} \tilde{A} = \tilde{A}$ for $i = 1, 2$ and similarly for $\tilde{B}$. Then by the above remarks concerning the two modifications the definition of the product $\tilde{A}.\tilde{B} = \delta^{\ast}_i (\delta_{i\ast} \tilde{A}. \delta_{i\ast} \tilde{B})$ does not depend on the choice of $i = 1, 2$. 

To construct $\tilde{A}$ and $\tilde{B}$, let $\mathcal{C} \subset V$ denote the union of all faces of $V$ that are not generated by the vectors $v_1, v_2$, where $v_i$ generates the kernel of $\delta_i$. Let $\tilde{A} = \delta^{\ast}_i\delta_{i\ast}\delta^{\ast}_j\delta_{j\ast}A$ and similarly for $\tilde{B}$. The cycle $\tilde{A}$ (respectively, $\tilde{B}$) is well-defined independent of the order of $\delta_i, \delta_j$ since it is obtained from $A \cap \mathcal{C}$ (respectively, $B \cap \mathcal{C}$) by adding uniquely weighted facets to all codimension one faces $E$ of $A$  (respectively, $B$), parallel only to the cones spanned by $E$ and $v_i$ for $i =1, 2$,  so that the result satisfies the balancing condition.  Similarly, in $\R^k$ we have $\delta_{i\ast} \tilde{A} = \delta_{j\ast} \tilde{A}$ and analogously for $\tilde{B}$, since the weighted complexes  $\delta_{i\ast}\tilde{A} \cap \delta(\mathcal{C})$ are equal for $i = 1, 2$ and balanced in all but the $\delta_i(v_j)$ direction where $j=1,2$ and $i\neq j$. Adding the necessary uniquely weighted facets to the codimension one faces of this complex in the $\delta_i(v_j)$ direction gives  $\delta_{i\ast}\tilde{A}$ for $i = 1,2$ and similarly for $\delta_{i\ast}\tilde{B}$. Also by  construction we have $\delta^{\ast}_i\delta_{i\ast} \tilde{A} = \tilde{A}$, and similarly for $B$. 

For $i = 1, 2$, define $\Delta^i_A = \delta^{\ast}_i\delta_{i\ast}A - A$ and $D^i_A =  \mbox{div}_A(f_i) \times \R  \subset D_i \times \R $ and similarly for $B$.
Assume first that $A= \tilde{A} - \Delta^1_A -  \Delta^2_A$, and analogously for $B$.  It follows that $\delta^{\ast}_j\delta_{j\ast}\Delta^i_A = \Delta^i_A$, and similarly for $B$. Then we obtain, $$A.B =  \delta_i^{\ast}(\delta_{i\ast}(\tilde{A} - \Delta^j_A). (\delta_{i\ast}(\tilde{B} - \Delta^j_B)) - D^i_A.\Delta^i_B - \Delta^i_A.D^i_B +  \Delta^i_A.\Delta^i_B$$
By distributivity,  Lemma \ref{pushpullLemma} and the assumption that $\delta^{\ast}_j\delta_{j\ast}\Delta^i_A = \Delta^i_A$ and $\delta^{\ast}_j\delta_{j\ast}\Delta^i_B = \Delta^i_B$  we have,
\begin{align*}
A.B = & \  \delta_i^{\ast}(\delta_{_i\ast}\tilde{A}. \delta_{i\ast}\tilde{B}) - \tilde{A}.\Delta^j_B - \Delta^j_A.\tilde{B} + \Delta^j_A.\Delta^j_B  \\ &-  D^i_A.\Delta^i_B - \Delta^i_A.D^i_B +  \Delta^i_A.\Delta^i_B.
\end{align*}

The last three terms are products in  $D_i \times \R$, and $\tilde{A}.\Delta^j_B,  \Delta^j_A.\tilde{B}$ and $ \Delta^j_A.\Delta^j_B$ are products in $V$. By applying  the contraction $\delta_j$ to calculate these three products we obtain: $$ \Delta^j_A.\Delta^j_B -  \tilde{A}.\Delta^j_B -  \Delta^j_A.\tilde{B} =  \Delta^j_A.\Delta^j_B - D^j_A.\Delta^j_B - \Delta^j_A.D^j_B.$$
Combining this with the equation above and we get, 
\begin{align*}
A.B  = & \  \delta^{\ast}_i( \delta_{i\ast}\tilde{A}. \delta_{i\ast}\tilde{B})  - D^j_A.\Delta^j_B - \Delta^j_A.D^j_B +  \Delta^j_A.\Delta^j_B \\ 
 & - D^i_A.\Delta^i_B - \Delta^i_A.D^i_B +  \Delta^i_A.\Delta^i_B,
\end{align*}
which is symmetric in $i$ and $j$ except for the first term $\delta^{\ast}_i( \delta_{i\ast}\tilde{A}. \delta_{i\ast}\tilde{B}) $ which was already shown to be the same for $i = 1, 2$.  So $A.B$ is independent of the contraction chart chosen. 

Dropping our previous assumption, for any cycle we may still write $A= \tilde{A} -\Delta_A^1 - \Delta_A^2 - \Xi_A$, where $\Xi_A$ is a cycle contained in the kernel of both $\delta_{1\ast}$ and $\delta_{2\ast}$.  
  Letting $A^{\prime} = A + \Xi_A$, and analogously for $B$, and using distributivity with respect to either contraction chart we have 
  \begin{equation}
  A.B = A^{\prime}.B^{\prime} - A^{\prime}.\Xi_B - \Xi_A.B^{\prime}  +  \Xi_A.\Xi_B.
  \end{equation}
As seen above, the product $A^{\prime}.B^{\prime}$ does not depend on the choice of chart $\delta_{i\ast}$.   Moreover since $\Xi_A, \Xi_B$ are in the kernels of both $\delta_{i\ast}$ for both $i=1, 2$, the product $\Xi_A, \Xi_B$   descends to $D_{ij} \times \R^2$ where $D_{ij}$ is the matroid corresponding to $M / \{i, j\}$ where $M$ is the matroid of $V$. This doesn't depend on the order of $i$ and $j$, see Section 3.1 of \cite{Ox}. 
The other two products also descend to  $D_{ij} \times \R^2$ as:
\begin{align*}
A^{\prime}.\Xi_B = (\tilde{A} - \Xi_A ). \Xi_B = (D^i_{\tilde{A}} + D^j_{\tilde{B}} - \Xi_A).\Xi_B \\
 \Xi_A.B^{\prime} = \Xi_A.(\tilde{B} - \Xi_B) = \Xi_A.(D^i_{\tilde{B}} + D^j_{\tilde{B}} - \Xi_B)
 \end{align*}
 which are symmetric in $i$ and $j$.

Now we treat the case of two elementary contractions.  Let $\delta:V \longrightarrow V^{\prime}$ be the composition of two elementary open matroidal contractions. First we set up notation to distinguish between the two orderings, similar to the proof of Proposition \ref{prop:homo}.   We will call  $\delta_i:V \longrightarrow V_i$ and 
$\tilde{\delta}_i: V_j \longrightarrow V^{\prime}$ for $i \neq j$. 
Let $D_i \subset V_i$ be the divisor associated to $\delta_i$ and suppose $D_i = \mbox{div}_{V_i}(f_i)$. Similarly, $\tilde{D_i} \subset V^{\prime}$ will denote the divisor of $\tilde{\delta}_i$ and $\tilde{f_i}$ its function.  
Keeping the notation from the beginning of the proof for $\Delta_A^i$ and $D_A^i$,  we also set:
\begin{align*}
\tilde{\Delta}^i_A =  & \tilde{\delta}^{\ast}_i \tilde{\delta}_{i\ast}A - A \subset V_j \\
\tilde{D}^i_A =  & \text{div}_{\delta_*A}(\tilde{f}_i) \times \R \subset \tilde{D_i} \times \R
\end{align*}

Applying Definition \ref{stableIntMat} first by contracting with $\delta_i$ and then contracting with $\tilde{\delta}_j$ we obtain:
$$A.B = \delta^{\ast}(\delta_{\ast}A.\delta_{\ast}B)  +  C_{i} + \delta^{\ast}_j\tilde{C}_{j}$$ 
Where $$C_{i}  = \Delta^i_A.\Delta^i_B  - \Delta^i_A.D^i_B - D^i_A. \Delta^i_B$$ with these three products calculated in $D_i \times \R$, and 
$$\tilde{C_{j}} = \tilde{\Delta}^j_{\delta_{i\ast}A}.\tilde{\Delta}^j_{\delta_{i\ast}B} -  \tilde{\Delta}^j_{\delta_{i\ast}A}.\tilde{D}^j_{\delta_{i\ast}B} - \tilde{D}^j_{\delta_{i\ast}A}. \tilde{\Delta}^j_{\delta_{i\ast}B}$$
with each product being calculated in  $\tilde{D}_j \times \R$. 

Again we first assume  that $A = \delta^{\ast}\delta_{\ast}A + \Delta^1_A + \Delta^2_A$. Then we have $\delta^{\ast}_i\tilde{\Delta}^j_{\delta_{i\ast}A} =  \Delta^j_A$. Restricting $\delta_j$ to $D_i \times \R$ we get an elementary open matroidal modification $\delta_j: D_i \times \R \longrightarrow \tilde{D}_i \times \R$. This can be checked on the level of the corresponding matroids. The divisor $\tilde{D}_i$ corresponds to the matroid $M \backslash j /i$ and contracting $D_i$ by $\delta_j$ corresponds to $M /i \backslash j$. By Proposition $3.1.26$ of \cite{Ox} these matroids are equal. Now applying  Lemma \ref{pushpullLemma} for the products in $\tilde{D}_i \times \R$ we have, $C_i = \delta^{\ast}_i\tilde{C}_i$ and   we obtain the same cycle regardless of order. 

The general case follows an argument similar to the general case of two distinct elementary contractions to $\R^k$. We can once again write $A = \delta^{\ast}\delta_{\ast}A - \Delta_A^1 -\Delta_A^2 - \Xi_A$ and similarly for $B$. The rest of the argument follows exactly as above with the products in the end being in $D_{ij} \times \R^2$, where again $D_{ij}$ corresponds to the matroid $M/\{i, j\}$.  
\end{proof}

Now for weakly transverse intersections in a $k$-dimensional matroidal fan $V$ we can make use of the definition of stable intersection in $\R^k$. For each facet $F$ of $V$  we can find a contraction chart $\delta: V \longrightarrow \R^k$ which does not collapse the face $F$. Recall, each facet of $V$ corresponds to a maximal chain in the lattice of flats of the corresponding matroid.
If after deleting an element $i$ from the matroid the chain corresponding to $F$ is still of length $k+1$, the tropical contraction $\delta_i$ of the Bergman fan does not collapse the face $F$. If the chain is of length $k+1$ on $n+1$ elements we can find $n-k$ elements to delete and not collapse $F$.  Using this contraction chart  to calculate the multiplicity we arrive at the following corollary.

\begin{corollary}\label{defTransInt}
Let $V \subset \R^n$ be a matroidal fan and suppose the intersection of the two subcycles $A, B \subset V$ is weakly transverse when restricted to an open facet $F \subset V$ then $A.B \cap F$ corresponds to the stable intersection  of Definition \ref{stabRn}.
\end{corollary}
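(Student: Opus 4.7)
My plan is to use the flexibility of Theorem \ref{welldef} and choose a particularly convenient contraction chart that is an isomorphism on the facet $F$. The paragraph preceding the corollary already explains why such a chart exists: since $F$ corresponds to a maximal chain of length $k+1$ in the lattice of flats, we can successively delete the $n-k$ elements of the ground set that do not appear in this chain, and at no step does the corresponding elementary contraction collapse $F$. Let $\delta: V \longrightarrow \R^k$ denote the resulting open matroidal contraction, factored as a sequence $\delta = \delta_{n-k} \circ \cdots \circ \delta_1$ of elementary open matroidal contractions, with divisors $D_j \subset V_j$.

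Next I would argue by induction on the number of steps in the contraction that, when restricted to the open facet $F$, the contribution from the ``correction term" $C_{A.B}$ of Definition \ref{stableIntMat} vanishes. At the first step, $C_{A.B}$ is supported in $D_1 \times \R$; by construction $\delta_1$ restricted to $F$ is injective with full-rank differential, so $F$ meets no face of $V$ that gets collapsed by $\delta_1$, and in particular $F$ is disjoint from the preimage of $D_1$. Therefore $(A.B) \cap F = \delta_1^{\ast}(\delta_{1\ast}A . \delta_{1\ast}B) \cap F$, and the right-hand intersection again takes place in a matroidal fan in which $\delta_1(F)$ is an open facet not collapsed by the remaining contractions. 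Iterating this, the entire product reduces on $F$ to $\delta^{\ast}(\delta_{\ast}A . \delta_{\ast}B) \cap F$, where the inner intersection is the stable intersection in $\R^k$ given by Definition \ref{stabRn}.

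The final step is to identify the pulled-back weight with the stable intersection weight in $\R^n$ from Definition \ref{stabRn}. Since $\delta$ restricted to a neighborhood of $F$ is a linear isomorphism onto its image, the weight function on $\delta^{\ast}(\delta_{\ast}A . \delta_{\ast}B)$ at a facet $F' \subset (A \cap B)^{(m)} \cap F$ equals the weight of $\delta_{\ast}A . \delta_{\ast}B$ at $\delta(F')$. In the transverse case this is $w_A(D)w_B(E) [\Z^k : \delta_*\Lambda_D + \delta_*\Lambda_E]$; since $F$ is a facet of $V$ and $\delta_*$ is unimodular on the lattice parallel to $F$, this lattice index coincides with $[\Z^n : \Lambda_D + \Lambda_E]$, matching case (1) of Definition \ref{stabRn}. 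In the non-transverse weakly transverse case, we use a generic perturbation: translating $A$ by a small generic $\epsilon v$ in $\R^n$ corresponds, after pushing forward by $\delta$, to a generic translation in $\R^k$, and the weights in $\R^n$ defined by case (2) match the pullback of the analogous $\R^k$ weights at each $F_i$.

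The main obstacle I anticipate is the bookkeeping of the lattice indices in the last step, to ensure that performing the contraction and then applying the $\R^k$-formula produces exactly the $\R^n$-formula from Definition \ref{stabRn}; this is a linear-algebra check but needs the condition that $F$ is not collapsed, i.e. that the kernel of $\delta$ is transverse to $\Lambda_F$, which is precisely how $\delta$ was chosen. Once this is verified at a single transverse intersection, the non-transverse weakly transverse case follows from the balancing condition and the displacement rule exactly as in the ambient $\R^n$ setting discussed after Definition \ref{stabRn}.
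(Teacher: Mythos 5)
Your proposal is correct and follows essentially the same route as the paper: the paper's justification is precisely the preceding paragraph, namely that one can choose a contraction chart $\delta: V \to \R^k$ which does not collapse the facet $F$ (by deleting the $n-k$ ground-set elements absent from the maximal chain of flats indexing $F$), so that on the open facet $F$ the correction terms of Definition \ref{stableIntMat} vanish and the product is the pullback of stable intersection in $\R^k$. The only small imprecision is in your final lattice-index identification: since $\Lambda_D + \Lambda_E$ has rank $k$, the relevant index is $[\Lambda_F : \Lambda_D + \Lambda_E]$ (equivalently the stable-intersection index computed in the affine span of $F$), not $[\Z^n : \Lambda_D + \Lambda_E]$, which is how the corollary's reference to Definition \ref{stabRn} must be read.
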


\begin{proposition}\label{proploc}
For two cycles $A, B$ in a matroidal fan $V \subset \R^n$, the product $A.B$ is supported on $(A \cap B)^{(m)}$, where $m$ is the expected dimension of intersection.
\end{proposition}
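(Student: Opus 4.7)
The plan is to induct on the number of non-coloop elements of the matroid underlying $V$ (which strictly decreases when passing to either $V'$ or $D \times \R$ in the recursion from Definition \ref{stableIntMat}). The base case is when this number is zero, so $V = \R^k$ and $A.B$ is the tropical stable intersection of Definition \ref{stabRn}; this is by construction supported on $(A \cap B)^{(m)}$.

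For the inductive step, fix an elementary open matroidal modification $\delta : V \longrightarrow V'$ with divisor $D \subset V'$, and write
\[ A.B \;=\; \delta^{\ast}(\delta_{\ast}A . \delta_{\ast}B) \;+\; C_{A.B}, \qquad C_{A.B} \;=\; \Delta_{A}.\Delta_{B} - \Delta_{A}.D_{B} - D_{A}.\Delta_{B}, \]
with the products in $C_{A.B}$ computed inside $D \times \R$. By the inductive hypothesis in $V'$, $\delta_{\ast}A . \delta_{\ast}B$ is supported on $(\delta_{\ast}A \cap \delta_{\ast}B)^{(m)}$; and by the inductive hypothesis in $D \times \R$, each of $\Delta_{A}.\Delta_{B}$, $\Delta_{A}.D_{B}$, and $D_{A}.\Delta_{B}$ is supported on the respective set-theoretic intersection inside $D \times \R$. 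Each summand has pure dimension $m$, hence so does $A.B$, and it suffices to prove $\text{supp}(A.B) \subseteq A \cap B$.

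I then split the analysis by position relative to $D \times \R$. Outside $D \times \R$ every summand of $C_{A.B}$ vanishes (by Lemma \ref{undergraph} and the definition of $D_A, D_B$), so $A.B$ reduces to $\delta^{\ast}(\delta_{\ast}A.\delta_{\ast}B)$ there; since $\delta$ restricts to a bijection $V \setminus (D \times \R) \to V' \setminus D$ and $\text{supp}(\delta_{\ast}X) \subseteq \delta(X)$ for any cycle $X$, any point $p \in \text{supp}(A.B) \setminus (D \times \R)$ has $\delta(p) \in \delta(A) \cap \delta(B)$, whose unique preimage in $V$ forces $p \in A \cap B$. Inside $D \times \R$, I use distributivity together with Lemma \ref{pushpullLemma} to expand
\[ A.B \;=\; (\delta^{\ast}\delta_{\ast}A - \Delta_{A}) . (\delta^{\ast}\delta_{\ast}B - \Delta_{B}), \]
and then argue that when both factors are restricted to the undergraph $D \times \R$, the resulting product coincides with the one computed intrinsically in $D \times \R$ via the factors $D_A - \Delta_A$ and $D_B - \Delta_B$ (which represent $A$ and $B$ inside the undergraph). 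The inductive hypothesis in $D \times \R$ then confines $\text{supp}(A.B|_{D \times \R})$ to $\text{supp}(A|_{D \times \R}) \cap \text{supp}(B|_{D \times \R}) \subseteq A \cap B$.

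The step I expect to be the main obstacle is making this second claim precise: identifying the restriction $\delta^{\ast}\delta_{\ast}A|_{D \times \R}$ with $D_A$ at the level of intersection products in $D \times \R$, so that the expansion of $A.B$ really restricts to $(D_A - \Delta_A).(D_B - \Delta_B)$ there. This reduces to a local calculation near $D$ using the explicit graph-plus-undergraph description of $\delta^{\ast}\delta_{\ast}A$ together with $\text{div}_{\delta_{\ast}A}(f) = D.\delta_{\ast}A$ from Proposition \ref{propMatInt}(5). Once this compatibility is secured, combining the two regional arguments gives $\text{supp}(A.B) \subseteq A \cap B$, and pure-dimensionality then yields $\text{supp}(A.B) \subseteq (A \cap B)^{(m)}$.
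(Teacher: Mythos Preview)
Your induction setup and the argument away from $D\times\R$ are fine, but the ``inside $D\times\R$'' step contains a genuine gap, and it is precisely the one you flagged as the main obstacle. The identity you need, namely that $A.B$ restricted to $D\times\R$ agrees with the product $(D_A-\Delta_A).(D_B-\Delta_B)$ computed in $D\times\R$, is false. Take $A=B=V$: then $\Delta_A=0$, $D_A=D\times\R$, and $A.B=V$, so your right-hand side is $(D\times\R).(D\times\R)=D\times\R$, while the left-hand side is $V\cap(D\times\R)$, which is only the closed undergraph together with $\Gamma_D$, not the full cylinder. More generally, $\delta^{\ast}\delta_{\ast}A\cap(D\times\R)$ consists of the undergraph facets over $\mathrm{div}_{\delta_{\ast}A}(f)$ together with the graph over $\delta_{\ast}A\cap D$; this is neither a balanced cycle in $D\times\R$ nor equal to $D_A$. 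Since your support containment on $D\times\R$ rests entirely on this identification, the argument does not go through, and the local calculation you sketch via Proposition~\ref{propMatInt}(5) does not repair it.

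The paper's proof avoids this difficulty by exploiting Theorem~\ref{welldef}: rather than fixing one $\delta$ globally, it works facet by facet and, for a given facet $F$ of $A.B$, chooses an elementary contraction $\delta$ that does \emph{not} collapse the face of $V$ containing $F$. This guarantees $F\subset\Gamma_f(V')$, so $F$ never sits in the interior of the undergraph. The analysis then reduces to two cases: either $\delta(F)\not\subset D$, where $C_{A.B}$ contributes nothing and induction in $V'$ applies directly; or $F\subset\Gamma_D$, where one argues that $F$ must appear in one of the three summands of $C_{A.B}$ and, by induction in $D\times\R$ together with $\delta|_{\Gamma_D}$ being a bijection onto $D$, lands in $(A\cap B)^{(m)}$. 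The freedom to adapt $\delta$ to $F$ is the missing idea in your approach.
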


\begin{proof}
Once again our proof goes by induction.  Given a facet $F$ of $A.B$, choose a elementary open matroidal  contraction chart $\delta: V \longrightarrow V^{\prime}$ which does not contract the face $E \subset V$ containing $F$.  Again, we may take any chart which does not contract all of the facets adjacent to $E$. 
Let $f$ be the function on $V^{\prime}$ giving the modification $\delta$ and $D$ the corresponding divisor. Then $F$ is contained in $\Gamma_{V^{\prime}}$ the graph of $f$. Let $\Gamma_D \subset \Gamma_{V^{\prime}}$ be the graph of $f$ restricted to $D$. If $\delta(F) \not \subset D$ 
then $\delta(F)$ must be a facet of $\delta_{\ast}A.\delta_{\ast}B$. By induction  $\delta(F) \subset (\delta_{\ast}A \cap \delta_{\ast}B)^{(m)}$, 
so we must have $F \subset (A \cap B)^{(m)}$. 

If on the other hand $F \subset \Gamma_D$ then $\delta(F)$ is an $m$ dimensional face contained in $\delta_{\ast}A \cap \delta_{\ast}B \cap D$ where $D$ the divisor of the elementary modification $\delta$ and $F$ must be in one of the products $\Delta_A.D_B$, $\Delta_B.D_A$ or $\Delta_A. \Delta_B$ which occur in the fan $D \times \R$. Then assuming the statement holds on $D \times \R$, the facet $F$ must be in one of  $( \Delta_A \cap D_B)^{(m)} \cap \Gamma_D$, 
 $( D_A \cap \Delta_B)^{(m)} \cap \Gamma_D$, or $( \Delta_A \cap \Delta_B)^{(m)} \cap \Gamma_D$.  In any of these three cases $F$ must be a facet of $(\Gamma_{\delta_{\ast}A} \cap \Gamma_{\delta_{\ast}B})^{(m)} \cap \Gamma_D$, and so in $(A \cap B)^{(m)}$. 
\end{proof}

\comment{
\begin{proposition}
For two cycles $A, B$ in a matroidal fan $V$ the intersection product from \ref{stableIntMat} satisfies the following:
\begin{enumerate}
\item{$A.B$ is supported on $(A \cap B)^{(m)}$, where $m$ is the expected dimension of intersection.}
\textcolor{red}{\item{The intersection product is stable, meaning given a continuous family of cycles $A_t$ for $0 \leq t \leq 1$ such that $A_0 = A$, then $\lim_{t \to 0} A_t.B = A.B$.}}
\end{enumerate}

\end{proposition}

\begin{proof}

 $D \subset V^{\prime}$ its divisor. Then $F$ is contained in $\Gamma_f(V^{\prime})$ the graph of the function $f$ restricted to $V^{\prime}$. If    One of $\delta^{\ast}(\delta_{\ast}A.\delta_{\ast}B), C_{A.B}$ must contain $F$ with a non-zero weight.  Suppose it is $\delta^{\ast}(\delta_{\ast}A.\delta_{\ast}B)$, since $F$ is in a facet which does not get contracted by $\delta$ then $\delta(F)$ must be a face of $\delta_{\ast}A.\delta_{\ast}B$, by induction $F$ is a facet of $(\delta_{\ast}A \cap \delta_{\ast}B)^{(m)}$, and so $F$ is a facet of $(A \cap B)^{(m)}$ \textcolor{red}{Maybe have to define a subdivision of $(\delta_{\ast}A \cap \delta_{\ast}B)^{(m)}$.}

Otherwise $F $ appears in $C_{A.B}$ with a non-negative weight. \textcolor{red}{MISSING A PIECE OF THE ARGUMENT HERE!!} This means that $F$ is a facet of $(\delta_{\ast}A \cap \delta_{\ast}B \cap D)^{(m)}$ where $D$ the divisor of the elementary modification $\delta$. 

\textcolor{red}{Finally, for a continuous family of cycles  stability of the intersection product follows from the stability in $\R^k$ as well the stability of the pull back and push forward maps. }
\end{proof}}


\end{section}

\begin{section}{Two dimensional matroidal fans}\label{sec:surfaces}

In this section we consider $V \subset \R^n$ a two dimensional matroidal fan, and $A, B \subset V$  one dimensional fan tropical cycles. This means the vertex of $A$ is the vertex of $V$, and similarly for $B$.  
 Firstly, we simplify the definition of the intersection product  given  in the last section in this case. 

For a two dimensional matroidal fan $V \subset \R^n$ we will consider the \textbf{coarse} subdivision on $V$ described in general by Ardilia and Klivans in \cite{ArdKli}. Suppose $V$ corresponds to a matroid $M$ which is loopless and contains no double points. A \textbf{double point} is an element $i \in E$ such that $r_M(\{i, j\}) = 1$ for some $j \in E$. Recall we defined the \textbf{fine} subdivision on $V$ as the polyhedral complex $B(M)$ described in Section \ref{sec:Berg}. When $V$ is of dimension two and the corresponding matroid satisfies the above assumptions,  the \textbf{coarse} subdivision of $V$ is obtained from $B(M)$ by removing one-dimensional cones corresponding to flats $M$ which are of rank two and size two or of size one and contained in exactly two flats of rank two. See \cite{ArdKli} for more details on the fine and coarse subdivisions of $V$. 

Let 
 $\delta: V \longrightarrow V^{\prime}$, be an elementary open matroidal  modification, $f$ be the associated function on $V^{\prime}$ and $D$ its divisor. 
 The cycle $\Delta_A$ as defined in the last section is also a fan cycle and thus it is a union of rays.  For any ray $\sigma_i \subset \Delta_A$ contained in the interior of a facet $P_l$ of the coarse subdivision of  $V$ we may write $\sigma_i = <p_iv^l_1 + q_iv^l_2>$, with $p_i, q_i \in \N$ with $(p_i, q_i) =1$, 
where $v^l_1, v^l_2$ are the primitive integer vectors corresponding to flats of $M$ and spanning the one dimensional faces bounding $P_l$. 
Call $A_{\sigma_i}$ the $1$-cycle with three rays each in the directions of $\sigma_i$, $v^l_1$ and $v^l_2$ with weights $ -w_A(\sigma_i)$, $w_A(\sigma_i)p_i,$ and $w_A(\sigma_i)q_i$ respectively. Then the cycle $A_{\sigma_i}$ is contained in the closure of $P_l$. Summing over the facets we get: $$\Delta_A = \sum_{l = 1}^k \sum_{\sigma_i \subset P_l^o} A_{\sigma_i}.$$

Given another fan subcycle $B \subset V $ we have an analogous decomposition $$\Delta_B = \sum_{l = 1}^k \sum_{\tau_j \subset P_l^o} B_{\tau_j}$$  where $\tau_j = <r_j v^l_1 + s_j v^l_2>$ when $\tau_j \subset P_l^o$ and the cycles $ B_{\tau_j} \subset P^o_l$ consist of the three rays  $\tau_j$, $v^l_1$ and $v^l_2$ with weights $ -w_A(\tau_j)$, $w_A(\tau_j)r_j,$ and $w_A(\tau_j)s_j$ respectively.

Since all of $A, B, D$ are fans,  the cycles $D_A$ and $D_B$ (as defined in the last section),  are supported on the one skeleton of $D \times \R$. Take any contraction of $D \times \R$ which preserves the face $F_l$ containing $A_{\sigma_i}$. Then $D_B$ gets sent to an affine line in $\R^2$  with $A_{\sigma_i}$ contained in a halfplane, so  $A_{\sigma_i}.D_B = 0$ similarly with the roles of  $A$ and $B$ interchanged.  Moreover, if $\sigma_i$ and $\tau_j$ are in different facets then  $A_{\sigma_i}.B_{\tau_j} = 0$. 
The multiplicity of the vertex in $A.B$ becomes,  $$m_{A.B}(v) = m_{\delta_{\ast}A. \delta_{\ast}B}(\delta(v)) + m_{\Delta_A.\Delta_B}(v) = m_{\delta_{\ast}A. \delta_{\ast}B}(\delta(v))  + \sum_{l= 1}^k \sum_{ \sigma_i, \tau_j \subset P^o_l}    A_{\sigma_i}.B_{\tau_j}.$$

The intersection of two  cycles $A_{\sigma_i}, B_{\tau_j}$ in a face $F_l$ can be calculated as stable intersection in $\R^2$ by Corollary \ref{defTransInt} . 
If $$\frac{s_j}{r_j} \geq \frac{p_i}{q_i}$$ then we can translate one of the two cycles so that they intersect in exactly one point of multiplicity $-w_A(\sigma_i)w_B(\tau_j) p_ir_j$. Otherwise, $$\frac{p_i}{q_i} > \frac{s_j}{r_j}$$ and we can find a translation so that the two cycles intersect in exactly one point of multiplicity $-w_A(\sigma_i)w_B(\tau_j) s_iq_j$. 
We have just demonstrated the following proposition. 

\begin{proposition}\label{surface}
Let $V \subset \R^n$ be a two dimensional matroidal fan with vertex $v$  and  suppose $A, B \subset V$ are fan cycles and $v \in (A \cap B)^{(0)}$.  Given a elementary contraction $\delta: V \longrightarrow V^{\prime}$, and using the above notation, we have:
$$ m_{A.B}(v) = m_{\delta_{\ast}A.\delta_{\ast}B}(\delta(v)) -  \sum_{l= 1}^k \sum_{ \sigma_i, \tau_j \subset P^o_l} w_A(\sigma_i)w_B(\tau_j) \min \{p_ir_j, q_is_j \}.  $$
\end{proposition}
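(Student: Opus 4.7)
The plan is to follow the calculation already sketched in the paragraph preceding the proposition, supplying the missing justifications. First I would apply Definition \ref{stableIntMat} to the contraction $\delta$, which gives
$$m_{A.B}(v) = m_{\delta_\ast A.\delta_\ast B}(\delta(v)) + m_{C_{A.B}}(v),$$
with $C_{A.B} = \Delta_A.\Delta_B - \Delta_A.D_B - D_A.\Delta_B$ computed inside the matroidal fan $D\times\R$. So the task reduces to identifying $m_{C_{A.B}}(v)$ with the double sum in the statement.

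Next I would justify the decomposition $\Delta_A = \sum_{l,\sigma_i} A_{\sigma_i}$ over rays $\sigma_i$ of $\Delta_A$ lying in the interior of coarse facets $P_l$. Since $A$ and $\delta^\ast\delta_\ast A$ are fans, $\Delta_A$ is a union of rays, each with a well-defined containing open facet of the coarse subdivision (using that the coarse and fine subdivisions agree on two-dimensional faces). Writing $\sigma_i = \langle p_i v_1^l + q_i v_2^l\rangle$ inside $P_l$, the three-ray cycle $A_{\sigma_i}$ is by construction balanced in the affine plane spanned by $P_l$, and the boundary contributions on $v_1^l,v_2^l$ sum correctly by the balancing of $\Delta_A$ at the one-dimensional cones of $V$, so the decomposition is an equality in $Z_1(D\times\R)$. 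The analogous decomposition holds for $\Delta_B$.

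Third, I would show the cross terms vanish. For $A_{\sigma_i}.D_B$, applying a contraction of $D\times\R$ that preserves the facet $P_l$ containing $\sigma_i$ maps $D_B$ to an affine line in $\R^2$ while $A_{\sigma_i}$ lies in a closed half-plane bounded by this line; by Corollary \ref{defTransInt} the stable intersection in $\R^2$ at the image of $v$ is zero because the meeting along a common ray is not weakly transverse and generic translation pushes the intersection point off. The same argument handles $D_A.\Delta_B$. For $\sigma_i \subset P_l$ and $\tau_j \subset P_{l'}$ with $l \neq l'$, a comparable half-plane argument after contraction shows $A_{\sigma_i}.B_{\tau_j}=0$.

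Finally, for $\sigma_i, \tau_j$ in the same facet $P_l$, I would compute $A_{\sigma_i}.B_{\tau_j}$ as a stable intersection in the $\R^2$ spanned by $v_1^l, v_2^l$ using Corollary \ref{defTransInt}. A generic translation leaves exactly one transverse intersection point whose weight, by Definition \ref{stabRn}, is $w_A(\sigma_i)w_B(\tau_j)$ times the $2\times 2$ lattice index of $\sigma_i$ against either $v_1^l$ or $v_2^l$, depending on the slope comparison. A short case split on whether $s_j/r_j \geq p_i/q_i$ shows the index equals $\min\{p_i r_j, q_i s_j\}$, and the negative weights $-w_A(\sigma_i), -w_B(\tau_j)$ placed on $\sigma_i,\tau_j$ in $A_{\sigma_i},B_{\tau_j}$ produce the overall minus sign. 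Summing over all $(\sigma_i, \tau_j)$ and facets $P_l$ yields the stated formula.

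The main obstacle I anticipate is the second step: verifying that the facet-by-facet decomposition of $\Delta_A$ is actually an equality of balanced $1$-cycles in $D\times\R$, and in particular that the auxiliary boundary rays $v_1^l, v_2^l$ introduced inside neighbouring $A_{\sigma_i}$ cancel correctly so as not to contribute extra terms to $C_{A.B}$. Once the decomposition is in place, the rest of the argument is a routine computation of stable intersections in $\R^2$.
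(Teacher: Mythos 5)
Your proposal follows the same route as the paper's own argument (which is precisely the computation in the paragraph preceding the proposition): apply Definition \ref{stableIntMat} to the chosen contraction, decompose $\Delta_A$ and $\Delta_B$ facet-by-facet into the three-ray fans $A_{\sigma_i}$, $B_{\tau_j}$, kill the cross terms $\Delta_A.D_B$, $D_A.\Delta_B$ and the mixed-facet terms by a half-plane/translation argument, and evaluate each same-facet product as a stable intersection in $\R^2$, where the slope case-split yields $-w_A(\sigma_i)w_B(\tau_j)\min\{p_ir_j,q_is_j\}$. The additional care you flag for the decomposition of $\Delta_A$ (matching of the boundary-ray weights via balancing) is a point the paper passes over silently, but it does not change the argument.
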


Using this formula we prove the claim stated at the end of Example \ref{M0n} by calculating the self intersection of $D \subset V$ where $D$ is the divisor of any elementary open matroidal modification $\delta: \mathcal{M}^{trop}_{0,n} \longrightarrow V$. 
The fans resulting from each elementary open matroidal contraction of $\mathcal{M}^{trop}_{0,5}$ are shown in Figure \ref{M05}. The fan $V$ is depicted by the graph in the upper right hand corner and $D \subset V$ is drawn in white. Performing another elementary  contraction $\delta^{\prime}: V \longrightarrow V^{\prime}$ we obtain $$m_{D.D}(v) = m_{\delta^{\prime}_{\ast}D.\delta^{\prime}_{\ast}D}(\delta^{\prime}(v)) - 1.$$ Let $\delta^{\prime \prime}: V^{\prime} \longrightarrow \R^2$ be the composition of the last two contractions, in fact here we have $\delta^{\prime \prime \ast}\delta^{\prime \prime}_{\ast}D = D$, so  by Lemma \ref{pushpullLemma} we have  $m_{D.D}(v) = -1.$  By  part $(5)$ of Proposition \ref{propMatInt} $D.D = \mbox{div}_D(f)$  so by Corollary \ref{regEff} the function giving the elementary open matroidal modification $\delta: \mathcal{M}_{0,5} \longrightarrow V$ cannot be a regular function on $\R^4$.

\comment{
\begin{example}
Denote the Bergman fans of the Fano plane and the anti-Fano by $F$ and $F^-$ respectively.  Both are contained in $\R^6$, and they can both be contracted to  $\mathcal{M}^{trop}_{0,5} \subset \R^5$ via a elementary contraction, see Example 1.5.6 of \cite{Ox}. Let $D_F, D_{F^-} \subset \mathcal{M}^{trop}_{0,5}$ denote the divisors of the contractions of $F$ and $F^-$ respectively. The rank two matroids corresponding to $D_F$ and $D_{F^-}$  are a  $3$-valent tropical line and $D^-$ a $4$-valent tropical line respectively. Moreover if $\delta: \mathcal{M}^{trop}_{0,n} \longrightarrow V$ is from the example above, $$\delta_{\ast}D_F = \delta_{\ast}D_{F^-} = D$$ where $D$ is also the divisor from the example above. Moreover it can be checked that $\delta^{\ast}\delta_{\ast}D_{F^-}  = D_{F^-} $, then by Lemma \ref{pushpullLemma} and Proposition \ref{surface}  we have $$D_F.D_{F^-} = -1\cdot v$$ where $v$ is the vertex of $\mathcal{M}^{trop}_{0,5}$. The map $\delta$ contracts a facet of $\mathcal{M}^{trop}_{0,5}$ containing a ray from $D_F$ with slope $(1,1)$ relative to the two one-cones generating this face. By Proposition \ref{surface}, we obtain: $$D_F.D_F = -2\cdot v.$$
\end{example}}

In general, a one dimensional matroidal fan cycle in a two dimensional matroidal fan $V$ corresponds to a matroid quotient of $V$. The next proposition describes the intersection multiplicity of two matroidal one cycles in $V$ in terms of the lattices of flats of the corresponding matroids. 

\begin{theorem}\label{thm:linesflats}
Let $V_M \subset \R^n$ be a two dimensional matroidal fan corresponding to the matroid $M$ and $L_1, L_2 \subset V_M$ be two one dimensional tropical cycles corresponding to matroids $M_1$ and $M_2$ respectively. Then their  intersection multiplicity at the vertex of $V_M$ is
$$m_{L_1.L_2}(v) = 1 - |\{F \ | \ r_M(F) = 2, \ F \in \Lambda(M_1) \cap   \Lambda(M_2)  \}|,$$
where $\Lambda(M_k)$ is the lattice of flats of $M_k$ and $r_M$ is the rank function on $M$. 
\end{theorem}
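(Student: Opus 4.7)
The plan is to apply Proposition~\ref{surface} after reducing to $\R^2$ via a matroidal contraction. By Lemma~\ref{quot}, since $L_1, L_2$ are matroidal subcycles of $V_M$, the matroids $M_1, M_2$ are quotients of $M$, so their lattices of flats embed into $\Lambda(M)$. A ray of $L_k = B(M_k)$ corresponds to a rank-$1$ flat $F$ of $M_k$, which as a flat of $M$ has either $r_M(F) = 1$ or $r_M(F) = 2$; this dichotomy drives the whole argument. Rays with $r_M(F) = 1$ lie along rays of the fine subdivision of $V_M$; rays with $r_M(F) = 2$ either coincide with rays of the coarse subdivision of $V_M$ (when $|F| \ge 3$) or lie strictly in the interior of a coarse $2$-facet (when $|F| = 2$, since then $v_F = v_i + v_j$).

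I will fix a basis of $M$ and let $\delta : V_M \to \R^2$ be the induced open matroidal contraction from Corollary~\ref{fullContraction}. Proposition~\ref{surface} gives
\begin{equation*}
m_{L_1.L_2}(v) \;=\; m_{\delta_* L_1.\delta_* L_2}(\delta(v)) \;-\; \sum_l \sum_{\sigma_i,\tau_j \subset P_l^\circ} \min\{p_i r_j, q_i s_j\},
\end{equation*}
using that all ray weights of $L_1, L_2$ are $1$. The first step will be to check that the rays of $\Delta_{L_k}$ landing in the interior of some coarse facet $P_l$ are exactly the $v_F$'s coming from rank-$1$ flats of $M_k$ with $r_M(F) = 2$ and $|F| = 2$; for $F = \{i,j\}$ the expansion $v_F = v_i + v_j$ forces $p_i = q_i = 1$, so a flat $F$ common to $M_1$ and $M_2$ contributes $\min\{1,1\} = 1$ to the sum, while unshared such $F$'s correspond to $\sigma_i$ and $\tau_j$ lying in distinct facets and contribute $0$. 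Hence the correction sum equals $|\{F : r_M(F) = 2,\ |F|=2,\ F \in \Lambda(M_1) \cap \Lambda(M_2)\}|$.

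The second step will be to compute $m_{\delta_*L_1.\delta_*L_2}(\delta(v))$ inside $\R^2$. Here each $\delta_*L_k$ is the matroidal tropical line in $\R^2$ arising from the deletion of $M_k$ matching the contraction $\delta$. Applying the fan displacement rule from Definition~\ref{stabRn} to the two pushforwards, the multiplicity at the origin equals $1$ plus one contribution for every ray direction common to $\delta_*L_1$ and $\delta_*L_2$ coming from a rank-$2$ flat of $M$ of size $\ge 3$. Such a flat survives as a distinct ray of the coarse subdivision of $V_M$ (unlike the size-$2$ rank-$2$ flats, which are absorbed into the interiors of facets), so it is detected here rather than by the correction sum. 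Rays arising from rank-$1$ flats of $M$ map to transverse directions that contribute only to the baseline $1$. Grouping the two types of shared rank-$2$ flats together gives
\begin{equation*}
m_{L_1.L_2}(v) \;=\; 1 \;-\; |\{F : r_M(F) = 2,\ F \in \Lambda(M_1) \cap \Lambda(M_2)\}|.
\end{equation*}

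The main obstacle I expect is the fine/coarse bookkeeping in both steps: one must verify that rank-$1$ flats of $M_k$ with $r_M(F) = 1$, and rank-$2$ flats of size $\ge 3$ not common to both $M_1$ and $M_2$, together contribute exactly $0$ beyond the baseline in the intersection inside $\R^2$, and that the lattice indices and primitive integer vectors align to give exactly $1$ (not some larger multiple) per shared rank-$2$ flat in each of the two types. This should ultimately reduce to a case analysis using the explicit form $v_F = \sum_{i \in F} v_i$, the deletion rule for matroid quotients, and the fan displacement rule.
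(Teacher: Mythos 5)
Your overall strategy (contract, apply Proposition \ref{surface}, and do flat bookkeeping) is the right one, but your one-shot version breaks down in two places. First, Proposition \ref{surface} is stated and proved for an \emph{elementary} contraction $\delta\colon V\to V'$; the intersection product itself is defined recursively, with a correction term living in $D\times\R$ for the divisor $D$ of each intermediate elementary modification. You apply the formula directly to the composite contraction $V_M\to\R^2$ with the correction sum taken over the coarse facets of $V_M$; that composite formula is not established anywhere and is not a formal consequence of the elementary one, since the intermediate steps each contribute their own correction terms. The paper avoids this by inducting one elementary contraction $\delta\colon V_M\to V_{M\backslash i}$ at a time, where the key combinatorial fact is that a ray $\sigma_F$ of $L_k$ lies in the interior of a coarse facet of the undergraph iff $r_M(F)=2$ \emph{and} $i\in F$; the flats with $i\notin F$ are handled by the induction hypothesis via the deletion formula for $\Lambda_{M\backslash i}$.

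Second, and more seriously, your dichotomy ``$|F|=2$ flats are caught by the correction sum, $|F|\ge 3$ flats are caught by the stable intersection in $\R^2$'' is the wrong invariant, and your step two is concretely false. Take $M=U_{3,5}$ on $\{0,\dots,4\}$ and $M_1=M_2$ the rank-two quotient with parallel classes $\{0,1\}$, $\{2,3\}$, $\{4\}$, so $L_1=L_2$ has rays $v_0+v_1$, $v_2+v_3$, $v_4$ and the theorem gives $m=1-2=-1$. Contracting to the basis $B=\{0,1,2\}$, the pushforward $\delta_*L_1$ is the vertical line in $\R^2$ (rays $v_0+v_1=(0,1)$ and $v_2=(0,-1)$), whose stable self-intersection at the origin is $0$, not the value $1$ predicted by your rule (there are no shared rank-two flats of size $\ge 3$). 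The shared size-two flat $\{0,1\}$ is contained in the retained basis, survives all the deletions, and is detected in $\R^2$; the shared size-two flat $\{2,3\}$ is killed when $3$ is deleted and is detected by the correction term at that intermediate stage. So what decides where a shared rank-two flat $F$ is counted is whether $r_{M|_{B\cup F}}(F\cap B)=2$ along the chosen chain of deletions, not $|F|$; your two pieces are each wrong by compensating amounts, which means the argument does not actually prove the identity. Repairing this essentially forces you back to the paper's element-by-element induction.
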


\begin{proof}
To start, note that by a verification of the possible lines in $\R^2$ the theorem holds for $V_M = \R^2$. 
Given a  two dimensional  matroidal fan $V_M \subset \R^n$,  let $\delta: V_M \longrightarrow V_{M \backslash i}$ denote a principal open matroidal modification. We may assume by induction that the given formula for the intersection multiplicity holds for $\delta_*L_1 = L_1^{\prime}$ and $\delta_*L_2 = L_2^{\prime}$ in $V_{M \backslash i}$. Now $L_k^{\prime}$ corresponds to the matroid $M_k \backslash i$ for $k = 1, 2$. 
Letting $\delta(v) = v^{\prime}$ we obtain:
\begin{equation}\label{flats one}
m_{L_1^{\prime}.L_2^{\prime}}(v^{\prime}) = 1 - |\{ F \ | \ r_{M\backslash i}(F) = 2, \ F \in \Lambda(M_1\backslash i) \cap   \Lambda(M_2 \backslash i)  \}|,
\end{equation}
and $m_{\delta^*\delta_*L_1.\delta^*\delta_*L_2}(v) =  m_{L_1^{\prime}.L_2^{\prime}}(v^{\prime}).$

A ray $\sigma_F$ of $L_k$  corresponding to a flat $F \in \Lambda(M_k) \backslash \{ \emptyset, E\}$ is contained in the interior of a facet of the undergraph $\mathcal{U}_f(V_{M\backslash i})$ considered with  the coarse subdivision if and only if the corresponding flat $F$ is of rank  two in $M$  and contains $i$. 
The weights of all edges of $L_1, L_2$ are equal to one, so in this situation the simplification given in Proposition \ref{surface} yields, 
\begin{equation}\label{flats two}
m_{L_1.L_2}(v) = m_{L_1^{\prime}.L_2^{\prime}}(v^{\prime})
 - | \{ F \ | \
r_M(F) = 2, \
F \in \Lambda(M_1) \cap   \Lambda(M_2) , \ 
i \in F
\}.\end{equation}
The combination of Equations \ref{flats one} and \ref{flats two} along with the description of the flats of $M \backslash i$ and $M / i$ given in the proof of Proposition \ref{matroidMod} proves the intersection multiplicity for $L_1$ and $L_2$ in $V_M$. 
\end{proof}

It is possible generalise the above proposition to a matroidal fan $V_M$ of any dimension and describe combinatorially the product of two fans corresponding to matroidal quotients $V_1, V_2 \subset V_M$. This product on matroidal quotients generalises the standard matroid intersection, which is shown by Speyer to correspond to tropical stable intersection under certain conditions, \cite{Speyer}. 

\begin{example}
Denote the Bergman fans of the Fano plane and the anti-Fano by $V_F$ and $V_{F^-}$ respectively.  Both are contained in $\R^6$, and they can both be contracted to  $\mathcal{M}^{trop}_{0,5} \subset \R^5$ via an elementary open matroidal  contraction, see Example 1.5.6 of \cite{Ox}. Let $D_F, D_{F^-} \subset \mathcal{M}^{trop}_{0,5}$ denote the divisors of the contractions of $F$ and $F^-$ respectively. Then  $D_F$ is a tropical one cycle with three rays   and $D_{F^-}$  is a tropical one cycle with four  rays.  The cycles $D_F$ and $D_{F^-}$ have two rays in common. Moreover,  these common rays correspond to flats of rank two in the matroid corresponding to $\mathcal{M}_{0,5}^{trop}$, therefore by Theorem \ref{thm:linesflats} we obtain, 
$$m_{D_F. D_{F^-}}(v) = -1.$$

\comment{
In fact, these correspond to flats of $\mathcal{M}^{trop}_{0,5}$
Moreover if $\delta: \mathcal{M}^{trop}_{0,n} \longrightarrow V$ is from the example above, $$\delta_{\ast}D_F = \delta_{\ast}D_{F^-} = D$$ where $D$ is also the divisor from the example above. Moreover it can be checked that $\delta^{\ast}\delta_{\ast}D_{F^-}  = D_{F^-} $, then by Lemma \ref{pushpullLemma} and Proposition \ref{surface}  we have $$D_F.D_{F^-} = -1\cdot v$$ where $v$ is the vertex of $\mathcal{M}^{trop}_{0,5}$. The map $\delta$ contracts a facet of $\mathcal{M}^{trop}_{0,5}$ containing a ray from $D_F$ with slope $(1,1)$ relative to the two one-cones generating this face. By Proposition \ref{surface}, we obtain: $$D_F.D_F = -2\cdot v.$$}
\end{example}

A negative intersection multiplicity of two  effective subcycles is under some circumstances  an indication that these cycles cannot both arise as tropicalisations of classical varieties. 
The following theorem makes this precise and is due to an observation of  E. Brugall\'e. Here, let $\textbf{K}$ be the field of  Puiseux series with coefficients in an algebraically closed field $\textbf{k}$, and let $\text{Trop}(\textbf{V}) \subset \R^n$ denote the tropicalisation of a subvariety $\textbf{V} \subset (\textbf{K}^*)^n$ from \cite{St7}.  We say that a subvariety $\textbf{V} \subset (\textbf{K}^*)^n$ is a plane if it is two dimensional and  defined by a system of linear equations. 
Let $\tilde{\textbf{V}} \subset (\textbf{K}^*)^n$ be a plane and 
$\Delta:  (\textbf{K}^*)^n \longrightarrow  (\textbf{K}^*)^{n-1}$ be the projection by forgetting a coordinate direction, then $\Delta(\tilde{\textbf{V}}) = \textbf{V}$ is also a plane.  Let $\text{Trop}(\tilde{\textbf{V}}) = \tilde{V}$ and $\text{Trop}(\textbf{V}) = V$, then there is a tropical modification $\delta: \tilde{V} \longrightarrow V$. Denote its corresponding divisor $D \subset V$. Using this notation we have the following theorem.

\begin{theorem}\label{realisable}  
Let $\tilde{\textbf{V}} \subset (\textbf{K}^*)^n$ be a plane and let $\textbf{V} \subset  (\textbf{K}^*)^{n-1}$ be the projection of $\tilde{\textbf{V}}$ along one of the coordinate directions. Suppose $\text{Trop}(\tilde{\textbf{V}} ) = \tilde{V} \subset \R^{n}$ and $\text{Trop}(\textbf{V}) = V \subset \R^{n-1}$ and let $\delta: \tilde{V} \longrightarrow V$ be an open elementary tropical modification with divisor $D \subset V$. 
 Given a tropical curve $C \subset V$ such that $D \not \subset C$, if there exists a bounded connected subset $Q \subset C\cap D$ such that 
 $$\sum_{p \in Q \cap (D \cap C)^{(0)} } m_p(D.C) < 0$$ then there is no algebraic curve $\textbf{C} \subset \textbf{V}$ such that $\text{Trop}(\textbf{C}) = C$. 
\end{theorem}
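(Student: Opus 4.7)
The plan is to argue by contradiction: suppose an algebraic curve $\textbf{C} \subset \textbf{V}$ satisfies $\text{Trop}(\textbf{C}) = C$. Because $\tilde{\textbf{V}}$ is obtained from $\textbf{V}$ as the graph of the forgotten coordinate, the modification $\delta:\tilde{V}\to V$ is the tropical modification associated to a regular function $\textbf{f}$ on $\textbf{V}$, and its tropicalisation $f = \text{Trop}(\textbf{f})$ satisfies $D = \text{div}_V(f)$. By Proposition \ref{StabDiv}, $D.C = \text{div}_V(f).C = \text{div}_C(f)$, so the hypothesis reads
\begin{equation*}
\sum_{p \in Q \cap (D \cap C)^{(0)}} m_p(\text{div}_C(f)) \,<\, 0.
\end{equation*}

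Since $D \not\subset C$, the curve $\textbf{C}$ is not contained in the classical divisor $\text{div}_{\textbf{V}}(\textbf{f})$, and because $\textbf{f}$ is regular on $\textbf{V}$ the restricted divisor $\text{div}_{\textbf{C}}(\textbf{f})$ is a well-defined \emph{effective} Weil divisor on $\textbf{C}$: a discrete formal sum of classical points, each weighted by a strictly positive integer. The key step is to establish a compatibility between this classical divisor and the tropical one over the bounded connected region $Q$, in the form
\begin{equation*}
\sum_{p \in Q \cap (D\cap C)^{(0)}} m_p(\text{div}_C(f)) \,=\, \sum_{\textbf{p} \in \mathrm{supp}(\text{div}_{\textbf{C}}(\textbf{f})),\; \text{val}(\textbf{p})\in Q} \text{ord}_{\textbf{p}}(\textbf{f}).
\end{equation*}
Once this identity is in place the right-hand side is a sum of strictly positive integers, hence nonnegative, contradicting the negativity of the left-hand side.

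The main obstacle is establishing this conservation identity. Pointwise it can fail: the classical zeros of $\textbf{f}|_{\textbf{C}}$ near a non-transverse tropical intersection point may have distinct valuations clustered around that point, and the tropical multiplicity $m_p$ is itself defined only after a stable-intersection deformation in the sense of Definition \ref{stabRn} and Corollary \ref{defTransInt}. Boundedness of $Q$ prevents classical zeros from escaping under such a deformation, while connectedness rules out classical zeros crossing the boundary of $Q$ through positive-dimensional strata of $D\cap C$. I would carry out the comparison by perturbing $\textbf{f}$ to $\textbf{f}_t$ so that $\text{div}_{\textbf{V}}(\textbf{f}_t)$ meets $\textbf{C}$ transversely in a neighbourhood of the preimage of $Q$, applying Corollary \ref{defTransInt} to the perturbed tropical picture to match each $m_p$ with a local algebraic count, and then deforming $t \to 0$ with the bounded connected $Q$ preserving the total multiplicity. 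Alternatively, one can invoke a standard tropicalisation-intersection compatibility theorem in a toric compactification of $\textbf{V}$ adapted to the fan structure of $V$.
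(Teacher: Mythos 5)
Your reduction is sound as far as it goes: by part $(5)$ of Proposition \ref{propMatInt} (rather than Proposition \ref{StabDiv}, which concerns cycles in $\R^n$) one has $D.C=\mbox{div}_C(f)$, and the theorem would indeed follow from the conservation identity equating $\sum_{p\in Q}m_p(D.C)$ with the nonnegative total order of vanishing of $\textbf{f}|_{\textbf{C}}$ at points of valuation in $Q$. The gap is that neither of your two proposed routes to that identity works. The perturbation route fails at exactly the points that matter: a negative multiplicity $m_p(D.C)$ can only arise where $D\cap C$ meets the codimension-one skeleton of $V$ and the intersection fails to be weakly transverse, since by Corollary \ref{defTransInt} the product restricted to the interior of a facet of $V$ is an honest stable intersection of effective cycles and hence has nonnegative multiplicities. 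Because $D$ is a fan determined by the matroid of $\tilde{V}$ and $C$ may be rigid in $V$ (as in Example \ref{negCurve}), no perturbation $\textbf{f}_t$ of the classical data moves the tropical intersection off the skeleton, so there is no ``perturbed tropical picture'' to which Corollary \ref{defTransInt} could be applied. The alternative route --- invoking a standard tropicalisation--intersection compatibility theorem --- begs the question: such theorems compare tropicalisations of intersections with stable intersection in $\R^n$, whereas $m_p(D.C)$ is computed with the matroidal product introduced in this paper, for which no off-the-shelf correspondence exists.

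The paper closes this step purely tropically, without matching multiplicities pointwise. Assuming $\textbf{C}$ exists, one tropicalises the graph of $\textbf{f}$ restricted to $\textbf{C}\setminus(\textbf{C}\cap\textbf{D})$ to obtain a cycle $\tilde{C}\subset\tilde{V}$ which is effective by Theorem 3.3.4 of \cite{St7}. On the other hand the pullback $\delta^{*}C$ carries, over each point $p\in Q$ with $m_p(D.C)\neq 0$, a ray in direction $-e_n$ of weight $m_p(D.C)$, and $\tilde{C}$ and $\delta^{*}C$ agree away from $\delta^{-1}(D\cap C)$. The difference $\tilde{C}-\delta^{*}C$ therefore has a connected component contained in $\delta^{-1}(Q)$ whose unbounded rays, since $Q$ is bounded, all point in direction $-e_n$; balancing of its recession fan forces their weights to sum to zero, so the downward rays of $\tilde{C}$ over $Q$ must have total weight $\sum_{p}m_p(D.C)<0$, contradicting effectiveness of $\tilde{C}$. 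This is precisely where boundedness and connectedness of $Q$ enter, and it is the piece your plan leaves unproven.
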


\begin{proof}
The plane $\tilde{\textbf{V}} \subset (\textbf{K}^*)^n$ is obtained by taking the graph of a linear function $\textbf{f}$ on  $\textbf{V} \backslash \textbf{D}$ where $\textbf{D}$ is the divisor of $\textbf{f}$ on $\textbf{V}$. 
Suppose there exists a $\textbf{C} \subset \textbf{V}$ such that $\text{Trop}(\textbf{C}) = C$. 
Then $\textbf{f}$ also gives  an embedding $\textbf{C} \backslash \textbf{C}\cap \textbf{D}  \longrightarrow \tilde{\textbf{V}} \subset (\textbf{K}^*)^n$ given by taking the graph of $\textbf{f}$ restricted to $\textbf{C} \backslash \textbf{C} \cap \textbf{D}$. Let $\tilde{\textbf{C}}$ denote the image of this embedding and let  $\tilde{C} = \text{Trop}(\tilde{\textbf{C}})$. By Theorem 3.3.4 of \cite{St7} there exist positive weights on the facets of $\tilde{C}$ making it a balanced cycle. 
However, the pullback $\delta^*C$ is not-effective,  in particular for each point $p \in Q$ with $m_p(D.C) \neq 0$ there is a corresponding half-ray in $\delta^*C$ in the direction $-e_n$ of weight $m_p(D.C)$, whose image under $\delta$ is the point $p$. The cycles $\tilde{C}$ and $\delta^*C$ agree as weighted complexes outside of $\delta^{-1}(D \cap C)$. Moreover, the difference  $\tilde{C} - \delta^*C$ is a cycle and  has a connected component contained in $\delta^{-1}(Q)$. Since $Q$ is bounded, all of the unbounded rays of $\tilde{C} - \delta^*C$ in $\delta^{-1}(Q)$ must have primitive integer direction $-e_n$. The recession fan of $\tilde{C} - \delta^*C$ is also balanced, meaning the sum of the weights of the unbounded edges of $\tilde{C} - \delta^*C$ must also be equal zero. 
However, the sum of the weights of the unbounded edges of $\delta^*C$ is given by  $$\sum_{p \in Q \cap (D \cap C)^{(0)} } m_p(D.C) < 0, $$ therefore the cycle $\tilde{C}$ may not be effective. This contradiction proves the theorem. 
\end{proof}

\begin{proposition}
Recall the curve $B$ from Example \ref{negCurve}. For $d \geq 3$, $B \subset V$ is not realisable over any field.  
\end{proposition}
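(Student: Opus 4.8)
The plan is to apply Theorem~\ref{realisable} with the ambient plane a realisation of $V = P$, the curve $C$ equal to $B$, and the divisor $D$ equal to the line $A$ of Example~\ref{negCurve}. All of the tropical input is already contained in Example~\ref{negCurve}. The line $A=\{(t,t,0)\}$ and the curve $B$ meet only at the vertex $p$ of $P$: checking each of the three rays of $B$ against $A$ shows that the origin is their only common point. Hence $Q=\{p\}$ is a bounded connected subset of $A\cap B$, and $A\not\subset B$. Moreover Example~\ref{negCurve} computes
$$\sum_{p\in Q\cap (A\cap B)^{(0)}} m_p(A.B)=2-d,$$
which is strictly negative exactly when $d\geq 3$. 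So the numerical hypothesis of Theorem~\ref{realisable} is satisfied, and it remains only to produce the modification $\delta\colon \tilde V\to P$ with divisor $A$ over an arbitrary field.

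To this end I would first identify $A$ as a matroidal divisor. Writing $P=B(U_{3,4})\cap U_0$ with rays $v_0=(1,1,1)$, $v_1=(-1,0,0)$, $v_2=(0,-1,0)$, $v_3=(0,0,-1)$, the two rays $(1,1,0)=v_0+v_3$ and $(-1,-1,0)=v_1+v_2$ of $A$ correspond to the rank-two flats $\{0,3\}$ and $\{1,2\}$. Thus $A=B(M_A)$ for the rank-two quotient $M_A$ of $U_{3,4}$ whose parallel classes are $\{0,3\}$ and $\{1,2\}$, and by Lemma~\ref{quot} and Proposition~\ref{matroidMod}, $A$ is the divisor of the open matroidal modification $\delta\colon\tilde V\to P$ associated with the single-element extension $\tilde M$ of $U_{3,4}$ on $\{0,1,2,3,4\}$ determined by $\tilde M\backslash 4=U_{3,4}$ and $\tilde M/4=M_A$.

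The heart of the argument, and the step I expect to be the main obstacle, is to realise this modification over every field. Here $\tilde M$ is realised by five points $0,1,2,3,4$ in the plane with $0,1,2,3$ in general position; the contraction condition $\tilde M/4=M_A$ forces $4$ to lie on both lines $\overline{03}$ and $\overline{12}$, so one takes $4=\overline{03}\cap\overline{12}$. This configuration exists over any field and in every characteristic, and, crucially, it extends \emph{any} realisation of $U_{3,4}$, since the point $4$ is determined by the first four. Consequently, for any field $\textbf{k}$ and any plane $\textbf{V}\subset(\textbf{K}^*)^3$ with $\text{Trop}(\textbf{V})=P$, the corresponding extension produces a plane $\tilde{\textbf{V}}\subset(\textbf{K}^*)^4$ projecting to $\textbf{V}$ and realising $\tilde V$, by the correspondence between matroids and tropical linear spaces \cite{tropGrass}; by Proposition~\ref{matroidMod} the induced tropical modification $\delta\colon\tilde V\to P$ then has divisor $A$.

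With the modification realised, Theorem~\ref{realisable} applies directly: for this $\textbf{V}$ there is no algebraic curve $\textbf{B}\subset\textbf{V}$ with $\text{Trop}(\textbf{B})=B$. Since $\textbf{V}$ was an arbitrary realisation of $P$ over an arbitrary field, $B$ is not realisable over any field. The delicate point throughout is the realisability step: one must verify not merely that $\tilde M$ is realisable, but that every realisation of the ambient plane $P$ extends compatibly, so that the obstruction covers all realisations and all fields simultaneously.
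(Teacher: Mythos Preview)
Your argument is correct and follows the same overall strategy as the paper: apply Theorem~\ref{realisable} to the pair $(A,B)$ using the computation $m_p(A.B)=2-d$ from Example~\ref{negCurve}, after first showing that the modification $\delta\colon\tilde V\to P$ with divisor $A$ comes from a genuine linear projection of planes over any field.

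The only substantive difference is how realisability of the extension is checked. The paper argues that the rank-three matroid $\tilde M$ on five elements is \emph{regular} by verifying it has no $U_{2,4}$ minor (equivalently, no elementary contraction of $\tilde V$ has a four-valent line as divisor), invoking the excluded-minor characterisation from \cite{Ox}. You instead construct the realisation explicitly, observing that the new point $4$ is forced to be $\overline{03}\cap\overline{12}$, which exists and is distinct from $0,1,2,3$ over any field. Your route has the advantage of making transparent the point the paper leaves implicit: not only is $\tilde M$ realisable over every field, but \emph{every} realisation of $U_{3,4}$ extends to one of $\tilde M$, so the obstruction from Theorem~\ref{realisable} applies to an arbitrary $\textbf{V}$ with $\text{Trop}(\textbf{V})=P$. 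The paper's regularity argument, on the other hand, generalises more readily to larger matroids where an explicit construction would be tedious.
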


\begin{proof}
By the above theorem it suffices to show that the matroid corresponding to $\tilde{V}$ which is the fan  obtained by the modification $\delta: \tilde{V} \longrightarrow V$ along the matroidal divisor $A$  is a  regular matroid, i.e. realisable over every field. For a matroid of this rank on only five elements we must only check that it has no minors corresponding to the four point line, see Theorem 6.6.4 of \cite{Ox}. Tropically this means that the divisor of any contraction cannot be the four valent tropical line $L \subset \R^3$. Verifying the five possible contractions and we see that it holds. 
\end{proof}

This is a light version of a much stronger result which should hold not just in open Bergman fans but in their compactifications as well and in non-singular tropical varieties. 

Unfortunately, there are some tropical $1$-cycles which are not realisable which pass this \textit{intersection test}. For example, Vigeland's $1$-parameter family of lines on a degree $d \geq 3$ surface, see Theorem 9.3 of \cite{VigInfinite}.  

\end{section}

\bibliography{bibarticle}{}

\begin{thebibliography}{10}

\bibitem{Aller}
L.~Allermann.
\newblock Tropical intersection products on smooth varieties.
\newblock arXiv:0904.2693v2.

\bibitem{AllRau}
L.~Allermann and J.~Rau.
\newblock First steps in tropical intersection theory.
\newblock {\em Mathematische Zeitschrift}.
\newblock to appear.

\bibitem{ArdKli}
F.~Ardila and C.J. Klivans.
\newblock The bergman complex of a matroid and phylogenetic trees.
\newblock {\em J. Comb. Theory Ser. B}, 96(1):38--49, 2006.

\bibitem{Berg}
G.~Bergman.
\newblock The logarithmic limit-set of an algebraic variety.
\newblock {\em Trans. Amer. Math. Soc.}, 1971.

\bibitem{Br16}
E.~Brugall\'e and L.~Lopez~de Medrano.
\newblock Inflection points of real and tropical plane curves.
\newblock arXiv:1102.2478.

\bibitem{SturmFeich}
E.M. Feichtner and B.~Sturmfels.
\newblock Matroid polytopes, nested sets and bergman fans.
\newblock {\em Portugaliae Mathematica}, 62:437--468, 2005.

\bibitem{FultInt}
W.~Fulton.
\newblock {\em Intersection theory}.
\newblock Springer Verlag, 1998.

\bibitem{FulStur}
W.~Fulton and B.~Sturmfels.
\newblock Intersection theory on toric varieties.
\newblock {\em Topology}, 1997.

\bibitem{GathKerbMark}
A.~Gathmann, M.~Kerber, and H.~Markwig.
\newblock Tropical fans and the moduli spaces of tropical curves.

\bibitem{Katz}
E.~Katz.
\newblock Tropical intersection theory from toric varieties.
\newblock preprint, arXiv:0907.2488v1.

\bibitem{KatzTool}
E.~Katz.
\newblock A tropical toolkit.
\newblock {\em Expo. Math.}, 2009.

\bibitem{St7}
D.~Maclagan and B.~Sturmfels.
\newblock Introduction to tropical geometry.
\newblock Book in progress, available at
  http://www.warwick.ac.uk/staff/D.Maclagan/papers/papers.html.

\bibitem{TMark}
T.~Markwig.
\newblock A field of generalised puiseux series for tropical geometry.
\newblock arXiv:0904.2693v2.

\bibitem{MikICM}
G.~Mikhalkin.
\newblock Tropical geometry and its applications.
\newblock {\em International Congress of Mathematicians}, 2:827--852, 2006.

\bibitem{mikMod}
G.~Mikhalkin.
\newblock Moduli spaces of rational tropical curves.
\newblock {\em Proceedings of Gokova Geometry- Topology Conference 2006}, pages
  39--51, 2007.

\bibitem{OrlikTerao}
P.~Orlik and H.~Terao.
\newblock {\em Arrangements of hyperplanes}.
\newblock Springer Verlag, 1992.

\bibitem{Pay1}
B.~Osserman and S.~Payne.
\newblock Lifting tropical intersections.
\newblock arXiv:1007.1314.

\bibitem{Ox}
J.~Oxley.
\newblock {\em Matroid theory}.
\newblock Oxford University Press, New York, 1992.

\bibitem{Rau}
J.~Rau.
\newblock Intersection on tropical moduli spaces.
\newblock preprint, arXiv: 0812.3678.

\bibitem{FirstSteps}
J.~Richter-Gerbert, B.~Sturmfels, and T.~Theobold.
\newblock First steps in tropical geometry.
\newblock {\em Idempotent mathematics and mathematical physics}, 377:289--317,
  2005.

\bibitem{tropGrass}
D.~Speyer and B.~Sturmfels.
\newblock The tropical grassmannian.
\newblock {\em Adv. in Geom.}, 4:389--411, 2004.

\bibitem{Speyer}
D.E. Speyer.
\newblock Tropical linear spaces.
\newblock {\em SIAM J. Discret. Math.}, 22(4):1527--1558, 2008.

\bibitem{SturmPoly}
B.~Sturmfels.
\newblock {\em Solving systems of polynomial equations}, volume~97 of {\em CBMS
  Regional Conference Series in Mathematics}.
\newblock American Mathematical Society, Providence, RI., 2002.

\bibitem{VigInfinite}
M.~D. Vigeland.
\newblock Smooth tropical surfaces with infinitely many lines.
\newblock {\em Arkiv for Matematik}, 2009.

\end{thebibliography}
\bibliographystyle{plain}

\end{document}